\documentclass[11pt]{amsart}
\usepackage{amsmath}
\usepackage{amsthm}
\usepackage{bbm}
\usepackage{amsfonts}
\usepackage{euscript}
\usepackage{amssymb}
\usepackage{enumerate}
\usepackage{mathrsfs}
\usepackage{mathtools}
\usepackage{dsfont}
\usepackage[english]{babel}
\usepackage[backend=biber,style=alphabetic]{biblatex}
\usepackage{geometry}
\usepackage{comment}
\usepackage{bbm}
\usepackage[colorlinks,linkcolor = blue,citecolor = blue]{hyperref}
\usepackage{cleveref}
\usepackage{float}
\usepackage{xfrac}
\usepackage{csquotes}
\usepackage{microtype}
\usepackage[normalem]{ulem}

\newcommand{\field}[1]{\mathbb{#1}}
\newcommand{\R}{\field{R}}

\newcommand{\N}{\field{N}}

\renewcommand{\L}{\field{L}}
\newcommand{\T}{\mathbb{T}}

\newcommand{\wto}{\rightharpoonup}
\newcommand{\wsto}{\overset{\ast}{\rightharpoonup}}
\let\div\undefined
\DeclareMathOperator{\div}{div}
\newcommand{\beq}{\begin{equation}}
\newcommand{\eeq}{\end{equation}}

\renewcommand{\phi}{\varphi}
\renewcommand{\theta}{\vartheta}

\DeclareMathOperator*{\esssup}{ess\,sup}
\DeclareMathOperator*{\essinf}{ess\,inf}
\let\S\relax
\usepackage{cancel}

\newcommand{\S}{\mathbb{S}}

\newtheorem{theorem}{Theorem}

\newtheorem{lemma}{Lemma}
\newtheorem{definition}{Definition}
\newtheorem{proposition}{Proposition}

\theoremstyle{remark}
\newtheorem{remark}{Remark}[section]

\numberwithin{equation}{section}
\numberwithin{theorem}{section}
\numberwithin{lemma}{section}
\numberwithin{definition}{section}

\usepackage{import}
\usepackage{xifthen}
\usepackage{pdfpages}
\usepackage{transparent}

\newcommand{\red}[1]{\textcolor{red}{#1}}
\title[stability of entropy waves]{Uniqueness and stability of entropy waves among vanishing dissipation limits from the Navier--Stokes--Fourier System}
\date{\today}

\author[Cooper Faile]{Cooper Faile}
\address[Cooper Faile]{\newline Department of Mathematics \newline The University of Texas at Austin, \newline 2515
Speedway, Austin, TX 78712 \newline USA}
\email{jcfaile@utexas.edu}

\author[Ingrid Lacroix-Violet]{Ingrid Lacroix-Violet}
\address[Ingrid Lacroix-Violet]{\newline Universit\'e de Lorraine, CNRS, IECL, \newline F-54000 Nancy, \newline France}
\email{ingrid.violet@univ-lorraine.fr}

\author[Alexis F. Vasseur]{Alexis F. Vasseur}
\address[Alexis F. Vasseur]{\newline Department of Mathematics \newline The University of Texas at Austin, \newline 2515
Speedway, Austin, TX 78712 \newline USA}
\email{vasseur@math.utexas.edu}

\newcommand{\norm}[1]{\left\lVert#1\right\rVert}
\thanks{2020 \textit{Mathematics Subject Classification}. Primary: 35Q31, Secondary: 35Q30, 35L67, 35B35}
\thanks{\textit{Key words and phrases}. Compressible Euler system, Navier Stokes Fourier system, Uniqueness, Stability, Vanishing dissipation limit, Entropy wave}
\thanks{\textbf{Acknowledgment.} This work was partially supported through the National Science Foundation grants DMS-2306852, DMS-1840314 and a joint NSF-EPSRC grant DMS-EPSRC-2219434.
The second and third authors are grateful to the IHES for their hospitality and support during their visit in Spring 2026.}

\begin{document}

\definecolor{vert}{rgb}{0.3,0.7,0.4}
\begin{abstract}
In this paper, we establish the stability of general entropy wave solutions of the compressible Euler equations (that is, profiles with constant velocity and pressure) in the class of vanishing dissipation limits from the Navier--Stokes--Fourier system. 
If the initial data of our sequence of solutions to the Navier--Stokes--Fourier system converges to the initial data of the entropy wave solution, we further have convergence of the solutions to the entropy wave. 
This gives uniqueness and stability of the entropy wave in this class. 
The proof is based on the $a$-contraction theory.
\end{abstract}

\maketitle

\section{Introduction}
Consider the compressible Euler system,
\begin{equation} \label{eq:euler-system}
    \begin{cases}
        \begin{aligned}
            \rho_t + \div (\rho u) &= 0, \\
            (\rho u)_t + \div (\rho u\otimes u) + \nabla p_{M}(\rho,\theta) &= 0, \\
             \left(\frac{1}{2}\rho |u|^2 + \rho e_M\right)_t + \div \left( \left(\frac{1}{2}\rho |u|^2 + \rho e_M\right) u + p_M(\rho,\theta) u \right) &= 0,
        \end{aligned}
    \end{cases}
\end{equation}
for density $\rho$, velocity $u$, temperature $\theta$, specific internal energy $e_M$, and pressure $p_M$ of a compressible gas.
This system possesses a special family of solutions called \emph{entropy waves}.
These solutions are characterized by the pressure $p_M$ and velocity $u$ being constant within the fluid.
The density $\rho$ and entropy $\rho s_M$ of the fluid may be non-constant and are simply transported by the constant velocity $u$. 
Generically solutions to~\eqref{eq:euler-system} form singularities in finite time, and it is possible for an entropy wave to possess discontinuities (for instance, the planar contact discontinuity without shear).
As a result, we must consider weak solutions where the system~\eqref{eq:euler-system} holds in some weaker sense.

This class of distributional solutions is too broad to provide uniqueness or stability of these solutions. 
One way to address this issue is by selecting admissible solutions which satisfy an entropy inequality,
\begin{equation} \label{eq:entropy-ineq}
    (\rho s_M)_t + \div (\rho s_M u) \geq 0.
\end{equation}
Indeed, it is known by the work of Dafermos \cite{MR546634} and DiPerna \cite{MR523630} that any bounded Lipschitz solution to the Euler system~\eqref{eq:euler-system} is unique within the class of bounded weak solutions satisfying the entropy inequality~\eqref{eq:entropy-ineq}.
However, the stability of discontinuous solutions is much more delicate. 
In dimensions two and greater the entropy inequality~\eqref{eq:entropy-ineq} is known to be an insufficient criterion to select a unique solution for certain initial data (see \cite{de2010admissibility}, \cite{giri2022non}, \cite{chiodaroli2015global}, and \cite{markfelder2021convex} for such constructions via convex integration).
Furthermore, there is currently no global existence theory for distributional weak solutions satisfying the entropy inequality in the multi-d setting.
Another more stringent admissibility criterion is to select solutions that may be obtained as a limit of solutions to a more physical system. 
In this case, we consider vanishing dissipation limits from the following Navier--Stokes--Fourier system,
\begin{equation} \label{eq:nsf-system}
    \begin{cases}
        \begin{aligned}
            \rho_t + \div (\rho u) &= 0, \\
            (\rho u)_t + \div (\rho u\otimes u) + \nabla p(\rho,\theta)  &= \nu\,\div(\S(\theta, \nabla u)), \\
            \left(\frac{1}{2}\rho |u|^2 + \rho e\right)_t + \div \left(\left(\frac{1}{2}\rho |u|^2 + \rho e\right) u - \nu\,\S u+pu+\kappa\mathbf{q}\right)   &= 0.
        \end{aligned}
    \end{cases}
\end{equation} 
Following Feireisl and Novotn\'{y} \cite{feireislnovotny17}, the pressure in the above system takes the form $p = p_M + ap_R$, where $p_R$ is a radiative pressure depending solely on the temperature $\theta$ and the parameter $a > 0$ modulates the strength of this pressure. 
The internal energy takes a similar form, $e = e_M + ae_R$.
We additionally have the viscous stress tensor $\S(\theta, \nabla u)$ and heat flux $\mathbf{q}(\theta,\nabla \theta)$, which introduce a dissipation in $u$ and $\theta$ that is not present in the Euler system~\eqref{eq:euler-system}.
Additionally, the strengths of the dissipative terms $\S$ and $\mathbf{q}$ are modulated by the parameters $\nu$ and $\kappa > 0$, respectively.
A complete discussion of these constitutive relations for the system~\eqref{eq:nsf-system} is in \Cref{sec:con-rel}. 
We now define the vanishing dissipation solutions; while they may not be distributional solutions to the Euler system~\eqref{eq:euler-system} they are a broad class of objects which represent the dynamics of a compressible inviscid fluid.
\begin{definition} \label{def:vanishing-diss-soln}
    A \emph{vanishing dissipation solution} to the system~\eqref{eq:euler-system} with initial data $U_0 = (\rho_0,u_0, \theta_0)$ is a triple $U = (\rho, m, \mathcal{E})$ obtained by respective weak limits from solutions to the Navier--Stokes--Fourier system~\eqref{eq:nsf-system} (see \Cref{def:solution}) in the following sense: for all $\kappa, a,\nu > 0$ let $U^{\kappa,a,\nu} = (\rho^{\kappa,a,\nu},m^{\kappa,a,\nu},\mathcal{E}^{\kappa,a,\nu}) = (\rho^{\kappa,a,\nu}, \rho^{\kappa,a,\nu}u^{\kappa,a,\nu}, \frac{|m^{\kappa,a,\nu} |^2}{2\rho^{\kappa,a,\nu}} + \rho^{\kappa,a,\nu}(e_M(\rho^{\kappa,a,\nu},\theta^{\kappa,a,\nu}) + ae_R(\rho^{\kappa,a,\nu},\theta^{\kappa,a,\nu}) ) $ be an admissible solution to the Navier--Stokes--Fourier system with these respective parameters taking the fixed initial data $U^0$. 
    Then, sequentially taking the limits $\kappa \to 0$, $a \to 0$, and $\nu \to 0$ along a subsequence we attain the limit $U$ for all $T > 0$ in the following weak topologies:
    \begin{align*}
        \rho^{\kappa,a,\nu} &\wsto \rho \quad \text{weakly$-\ast$ in}\quad L^\infty(0,T; L^2(\T^3)), \\
        m^{\kappa,a,\nu} &\wsto m \quad \text{weakly$-\ast$ in}\quad L^\infty(0,T; L^{4/3}(\T^3)), \\
        \mathcal{E}^{\kappa,a,\nu} &\wsto \mathcal{E}\quad\text{weakly$-\ast$ in}\quad L^\infty(0,T; \mathcal{M}(\T^3)),  
    \end{align*}
    where $\mathcal{M}$ denotes the space of Radon measures on $\T^3$.
\end{definition}

There have been several works on the uniqueness of entropy waves and related solutions to the Euler equations~\eqref{eq:euler-system}. 
The case of planar contact discontinuities without shear among vanishing dissipation limits from a Navier--Stokes--Fourier system via the method of $a$-contraction was first established by the third author and Wang in the 1-d case \cite{vy-1d} and later Kang, the third author, and Wang proved in the 3-d case \cite{kvy21}. 
We note that the existence of weak solutions to the Navier--Stokes--Fourier system for the constitutive relations considered in \cite{kvy21} remains open.
Wang, Zhang, and Zhao proved the nonlinear stability of entropy waves under the Taylor sign condition.
Specifically, they describe the evolution of the boundary of an interface of discontinuity, and show stability provided the Taylor sign condition holds \cite{wang2024nonlinear}.
In this work, we study waves with $p_M \equiv \text{const}$, so we are outside the scope of \cite{wang2024nonlinear}.
We note that due to the constant velocity assumption, our result does not apply to vortex sheets or contact discontinuities with shear. 
These structures are well known to exhibit Kelvin--Helmholtz instability and can admit infinitely many weak solutions.
For such examples, see \cite{krupa2025contact} where the authors study 2-d compressible Euler with a particular pressure and \cite{mengual2023dissipative} where the authors show non-uniqueness of vortex sheets for 2-d incompressible Euler.
In the setting of smooth solutions, Feireisl proved stability under vanishing dissipation limits from the Navier--Stokes--Fourier system, with a specific relation between the radiative coefficient $a$ and dissipation coefficients $\nu$ and $\kappa$ \cite{feireisl-zero-diss-limit}.


Compared with previous work \cite{kvy21}, we focus on establishing stability in the regime of constitutive relations of the system~\eqref{eq:nsf-system} for which there is an established existence theory. 
For this purpose, the theory developed by Feireisl and Novotn\'{y} \cite{feireislnovotny17} provides the existence of weak solutions of~\eqref{eq:nsf-system} for a large family of perturbations of our entropy wave. 
Additionally, we establish stability of a much more general family of profiles. 

\subsection{Main results}
We begin by stating an existence theorem for the vanishing dissipation solutions \Cref{def:vanishing-diss-soln} which holds for a general class of initial data. 
First, we must mention that our molecular pressure is assumed to satisfy
$$ p_M(\rho,\theta) \sim \rho^\gamma \quad \text{for}\quad \theta \ll \rho $$
for some $\gamma > 2$.
The specific form of this pressure will be made precise in \Cref{sec:con-rel}.
We now state the existence theorem. 
\begin{theorem}
    \label{thm:existence}
    If $U_0 = (\rho_0, u_0, \theta_0)$ satisfies that 
    \begin{gather*}
        \rho_0 \geq 0,\quad  \int_{T^3} \rho_0\,dx > 0,\\  \int_{\T^3} \rho_0 \frac{|u_0|^2}{2}+ \rho_0 e_M(\rho_0,\theta_0) + \rho_0|s_M(\rho_0,\theta_0)| + \theta_0^4\,dx < \infty,\\\theta_0>0\ \text{a.e. in }\T^3
    \end{gather*}
    then there exists a vanishing dissipation solution $U$ to the system~\eqref{eq:euler-system} with initial data $U_0$. 
\end{theorem}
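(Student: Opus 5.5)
The plan is to combine the Feireisl--Novotn\'{y} existence theory for \eqref{eq:nsf-system} with uniform-in-parameter bounds coming from the total energy and the ballistic energy, and then extract weak-$\ast$ limits by compactness. First, for each fixed $\kappa,a,\nu>0$, I will invoke the global existence result of \cite{feireislnovotny17} for weak (admissible) solutions of \eqref{eq:nsf-system} on $\T^3$. This requires the initial data to have finite mass, energy and entropy. The hypotheses of \Cref{thm:existence} give exactly this: $\rho_0\ge 0$ with positive mass, finite $\rho_0|u_0|^2 + \rho_0 e_M + \rho_0|s_M|$, $\theta_0>0$ a.e., and $\theta_0^4\in L^1$. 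The last condition controls both the radiative energy $a\theta_0^4$ and the radiative entropy $\frac{4a}{3}\theta_0^3$, the latter by H\"older on the torus. This produces a family $U^{\kappa,a,\nu}$ of admissible solutions in the sense of \Cref{def:solution}, all sharing the fixed initial data $U_0$.

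Second, I will derive bounds uniform in all three parameters. On the torus, the weak solutions satisfy conservation of total energy:
\[
\int_{\T^3} \tfrac12\rho|u|^2 + \rho e_M(\rho,\theta) + a\theta^4\,dx \le \int_{\T^3} \tfrac12\rho_0|u_0|^2 + \rho_0 e_M(\rho_0,\theta_0) + a\theta_0^4\,dx .
\]
For $a\le 1$ the right-hand side is bounded independently of $\kappa,a,\nu$. The constitutive assumption $p_M\sim\rho^\gamma$ with $\gamma>2$ in the degenerate region, together with $\rho e_M\gtrsim p_M$, gives $\rho e_M \gtrsim \rho^\gamma - C$. Hence $\rho^{\kappa,a,\nu}$ is bounded in $L^\infty(0,T;L^\gamma)\subset L^\infty(0,T;L^2)$, which is where $\gamma>2$ is used. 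Next, write $m=\sqrt\rho\,\sqrt\rho u$. H\"older with $\sqrt\rho u\in L^\infty L^2$ and $\sqrt\rho\in L^\infty L^{2\gamma}$ bounds $m$ in $L^\infty(0,T;L^{2\gamma/(\gamma+1)})$, and $2\gamma/(\gamma+1)\ge 4/3$ since $\gamma\ge2$. Finally, $\mathcal{E}^{\kappa,a,\nu}\ge 0$ is bounded in $L^\infty(0,T;L^1)\subset L^\infty(0,T;\mathcal{M}(\T^3))$. I do not need the entropy or dissipation estimates for the limit itself, only to justify the admissibility of the approximate solutions.

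Third, I will extract the limits. By Banach--Alaoglu in the duals $L^\infty(0,T;L^2)=(L^1(0,T;L^2))^\ast$, $L^\infty(0,T;L^{4/3})=(L^1(0,T;L^4))^\ast$, and $L^\infty_{w\ast}(0,T;\mathcal{M})=(L^1(0,T;C(\T^3)))^\ast$, I first fix $a,\nu$ and choose a subsequence $\kappa_j\to0$ along which all three quantities converge weakly-$\ast$ to limits $U^{a,\nu}$. These limits inherit the same bounds by lower semicontinuity of the norms. I then send $a\to 0$, and then $\nu\to0$, by the same argument, taking $T=n\in\N$ and using a diagonal argument so that one subsequence works for all $T$.

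The main obstacle is the interpretation of the iterated limits. After $\kappa\to0$, the object $U^{a,\nu}$ is no longer an NSF solution, so the subsequent limits must be taken along nested subsequences of the original family. Concretely, I will realise $U$ as a limit point of $U^{\kappa_j,a_k,\nu_l}$ in the metrizable bounded sets of these dual spaces, so that the iterated limit is well defined. The remaining point is checking that the Feireisl--Novotn\'{y} hypotheses, such as the growth of the heat conductivity and the form of $p_R$, hold for every parameter triple with $a,\kappa,\nu\in(0,1]$. This is a matter of verifying the constitutive relations in \Cref{sec:con-rel}.
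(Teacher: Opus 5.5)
Your proposal is correct and follows essentially the same route as the paper: for each parameter triple you invoke the Feireisl--Novotn\'y existence result (Proposition~\ref{prop:nsf-exist}), and you use energy conservation together with $\rho e_M \geq \frac{P_\infty}{\gamma-1}\rho^\gamma$ and H\"older's inequality on $m=\sqrt{\rho}\,\sqrt{\rho}u$ to get bounds uniform in $\kappa$, $a\leq 1$, $\nu$, before extracting iterated weak-$\ast$ limits by Banach--Alaoglu. Your extra details fill in points the paper leaves implicit and are harmless: the predual $L^1(0,T;C(\T^3))$ for the measure-valued energy, the diagonal argument over $T$, and the check that $\tfrac{4a}{3}\theta_0^3\in L^1$ for admissibility.
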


We now discuss the stability of the entropy wave. 
Without loss of generality, we may assume by Galilean invariance that any entropy wave is stationary. 
In this frame, we fix an entropy wave with density $\bar \rho_0 $ and a temperature $\bar \theta_0$ which we suppose are both bounded above and below away from zero.
This wave can be expressed in conservative variables as
\begin{equation}
    \label{eq:V-form}
    V(t,x) = ( \bar\rho_0(x), 0, \bar{\mathcal{E}} ),
\end{equation} 
where $\bar{\mathcal{E}} = \bar \rho_0(x) e_M(\bar \rho_0(x), \bar \theta_0(x))$. 
We now state the stability theorem for this wave. 
\begin{theorem}
    \label{thm:main}
    Let $U_0^n=(\rho_0^n,u_0^n,\theta_0^n)$ be a sequence of initial data satisfying
    \begin{equation}
        \label{eq:uniform-bound-assumption-thm}
        \begin{gathered}
            \sup_{n\in \N} \left(\norm{\rho_0^n}_{L^\gamma(\T^3)} + \norm{\rho_0^n\frac{|u_0^n|^2}{2} + \rho_0^ne_M(\rho_0^n,\theta_0^n)}_{L^1(\T^3)} + \norm{\theta_0^n}_{L^4(\T^3)} \right)< \infty \\ 
            \left(\rho_0^n,\rho_0^n u_0^n, \rho_0^n \frac{|u_0^n|^2}{2} + \rho_0^n e_M(\rho_0^n,\theta_0^n) \right) \to \left(\bar\rho_0, 0,  \bar{\mathcal{E}}\right) \quad \text{in} \quad L^2(\T^3),
        \end{gathered}
    \end{equation}
    where $\bar \rho_0, \bar{\mathcal{E}}$ are conservative coordinates of the entropy wave~\eqref{eq:V-form}.
    If $U_n = (\rho_n, m_n, \mathcal{E}_n)$ is a sequence of vanishing dissipation solutions to the system~\eqref{eq:euler-system} with the corresponding initial data $U_0^n$ in the sense of \Cref{def:vanishing-diss-soln} then we have 
    \begin{equation} \label{eq:main-thm-cvs}
        \begin{aligned}
            \rho^n \wsto&\, \bar\rho \text{ weakly-$\ast$ in }L^\infty(0,T; L^2(\T^3)), \\
            m^n \to&\, 0  \text{ in }L^\infty(0,T; L^{4/3}(\T^3;\R^3)), \\
            \mathcal{E}^n\to&\, \bar{\mathcal{E}}  \text{ in }L^\infty(0,T;\mathcal{M}(\T^3)).
        \end{aligned}
    \end{equation}
\end{theorem}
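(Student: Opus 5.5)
We will prove \Cref{thm:main} by a relative entropy argument carried out first at the level of the Navier--Stokes--Fourier solutions $U^{\kappa,a,\nu}_n$, and then passed to the limits $\kappa\to0$, $a\to0$, $\nu\to0$ by weak lower semicontinuity. The key observation is that the entropy wave $V$ has constant pressure $\bar p$ and zero velocity. It is therefore almost a stationary solution of \eqref{eq:nsf-system}: the momentum equation holds exactly up to the radiative pressure $ap_R(\bar\theta_0)$. The only genuinely nonstationary defect is the heat flux $\kappa\,\mathbf{q}(\bar\theta_0,\nabla\bar\theta_0)$, which is not divergence free. Since $\bar\theta_0$ may be discontinuous, we first mollify, replacing $\bar\theta_0,\bar\rho_0$ by $\bar\theta_\delta,\bar\rho_\delta$ with $p_M(\bar\rho_\delta,\bar\theta_\delta)=\bar p$ kept exactly constant. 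Concretely, we smooth $\bar\theta_0$ and solve for $\bar\rho_\delta$, using monotonicity of $p_M$ in $\rho$ and the bounds on $\bar\rho_0,\bar\theta_0$ away from zero. We then let $\delta\to0$ at the very end, after $\nu\to0$.

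The main estimate uses the relative entropy (ballistic free energy) of Feireisl--Novotn\'y,
\[
E(U\,|\,\bar U)=\int_{\T^3}\tfrac12\rho|u-\bar u|^2+H_{\bar\theta}(\rho,\theta)-\partial_\rho H_{\bar\theta}(\bar\rho,\bar\theta)(\rho-\bar\rho)-H_{\bar\theta}(\bar\rho,\bar\theta)\,dx,
\qquad H_{\bar\theta}=\rho e-\bar\theta\rho s .
\]
The relative energy inequality satisfied by admissible weak solutions of \eqref{eq:nsf-system} (see \Cref{def:solution}) is tested with $\bar U=(\bar\rho_\delta,0,\bar\theta_\delta)$. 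Because $\bar u=0$ and $\nabla\bar p=0$, the convective and pressure remainders cancel. The remaining terms fall into three groups:
\begin{enumerate}[(i)]
\item terms with the factor $\nu$ (viscous, controlled by $\nu\|\nabla\bar u\|=0$ plus $\nu$-small remainders) or $a$ (radiative, $O(a)$ using the $\theta^4$ bound);
\item the heat-flux term $\kappa\int \mathbf{q}\cdot\nabla\bar\theta_\delta/\theta$ and related terms, which are $O(\kappa C_\delta)$ thanks to the dissipation bound on $\kappa\int|\nabla\theta|^2/\theta^2$;
\item the term $\int(\rho s-\bar\rho_\delta\bar s_\delta)\partial_t\bar\theta_\delta+\rho u\cdot\nabla\bar\theta_\delta(s-\bar s_\delta)$.
\end{enumerate}
Since $\bar\theta_\delta$ is time-independent, the $\partial_t\bar\theta_\delta$ part vanishes. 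The remaining transport term is controlled by $C_\delta\,E(U|\bar U)$ plus the part linear in the momentum. This linear part is exactly where the $a$-contraction viewpoint (weight $a$ being trivial here, since only contact-type waves occur) enters. Because $\rho u\,\bar s_\delta\cdot\nabla\bar\theta_\delta$ is linear in $m$, it need not be small. We plan to handle it by adding to $H$ a correction $\bar\theta\,\rho\, \phi(x)$ with $\phi$ chosen so that $\bar s_\delta\nabla\bar\theta_\delta+\nabla\phi\cdot(\ldots)$ is a gradient term absorbed by the mass equation. Since $\bar p$ is constant, $\bar s\nabla\bar\theta=\nabla \bar g$ where $\bar g$ is the Gibbs free energy, by the Gibbs relation $dg=-s\,d\theta+dp/\rho$. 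The term then becomes $\int \rho u\cdot\nabla\bar g=-\frac{d}{dt}\int\rho\bar g$, which is exactly what the $-\partial_\rho H$ linear term compensates. \textbf{This cancellation is the step I expect to be the main obstacle}, namely verifying that the linear-in-$m$ terms cancel exactly using $p_M(\bar\rho_\delta,\bar\theta_\delta)=\mathrm{const}$.

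Granting it, Gronwall gives
\[
E(U_n^{\kappa,a,\nu}(t)|\bar U_\delta)\le e^{C_\delta t}\big(E(U^n_0|\bar U_\delta)+C_\delta(\kappa+a+\nu)\big).
\]
We pass to the limits in the order of \Cref{def:vanishing-diss-soln}. Convexity of the integrand in conservative variables makes $E(\cdot|\bar U_\delta)$ weakly lower semicontinuous, defined on measures by its recession part, so the limit $U_n$ satisfies $E(U_n(t)|\bar U_\delta)\le e^{C_\delta t}E(U_0^n|\bar U_\delta)$. Next we let $n\to\infty$. The $L^2$ convergence in \eqref{eq:uniform-bound-assumption-thm}, together with the uniform bounds and continuity of $E$, gives $\limsup_n E(U_0^n|\bar U_\delta)\le o_\delta(1)$ as $\delta\to0$. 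Choosing $\delta=\delta_n\to0$ slowly enough that $e^{C_\delta T}o_\delta(1)\to0$ is the delicate bookkeeping point. Finally, coercivity of $E$ yields the conclusions \eqref{eq:main-thm-cvs}:
\begin{itemize}
\item $\int\frac{|m|^2}{\rho}$-type control gives $m^n\to0$ strongly in $L^{4/3}$ by H\"older with the $L^\gamma$ bound on $\rho$;
\item the energy defect measure together with the density part gives $\mathcal{E}^n\to\bar{\mathcal{E}}$;
\item the weak-$\ast$ convergence of $\rho^n$ follows from the uniform bounds and identification of the limit.
\end{itemize}
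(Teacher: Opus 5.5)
There is a genuine gap. It sits in the Gronwall step, not in the cancellation you flagged. That cancellation is automatic. With $\bar u=0$ and $p_M(\bar\rho_\delta,\bar\theta_\delta)\equiv\bar p$, the Gibbs relation gives $\nabla\bar g_\delta=-\bar s_\delta\nabla\bar\theta_\delta$. Up to $O(a)$ and heat-flux terms, the transport terms therefore collapse to
\[
-\int_{\T^3}\rho\,(s-\bar s_\delta)\,u\cdot\nabla\bar\theta_\delta\,dx .
\]
This is the computation behind \Cref{lem:rel-entropy-eval} and \Cref{lem:rel-entropy-shifts}.

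The leftover term, however, is only bounded by $\norm{\nabla\bar\theta_\delta}_{L^\infty}E(U|\bar U_\delta)$. Whenever $\bar\theta_0$ is discontinuous, which is the case the theorem is really about (e.g.\ a contact discontinuity; only $\bar\rho_0,\bar\theta_0\in L^\infty$ is assumed), one has $\norm{\nabla\bar\theta_\delta}_{L^\infty}\sim\delta^{-1}$. Gronwall then produces a factor $e^{CT/\delta}$. Meanwhile, for fixed $\delta$ the initial term does not vanish as $n\to\infty$: $E(U_0^n|\bar U_\delta)\to E(V|\bar U_\delta)\sim\norm{\bar\theta_0-\bar\theta_\delta}_{L^2}^2$, which is of order $\delta$ for a jump across a surface. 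Since $\delta e^{CT/\delta}\to\infty$, no choice $\delta=\delta_n$ rescues the estimate. Your scheme is the Dafermos--DiPerna weak--strong argument, and it only works for Lipschitz waves.

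There is a second, independent problem. The claim that $\rho|s-\bar s||u|$ is controlled by $E$ uniformly in $\nu$ and $a$ fails far from the reference state; the radiative piece $a\theta^3|u|$ is one example. Feireisl--Novotn\'y absorb such terms using $\nu\norm{\nabla u}_{L^2}^2$, which degenerates as $\nu\to0$.

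The missing idea is the shift. The paper never compares with a static profile. It transports the temperature by the mollified Navier--Stokes--Fourier velocity, $\partial_t\bar\theta_\delta+u_\delta\cdot\nabla\bar\theta_\delta=0$, mollifies again in $\epsilon$, and defines $\bar\rho_{\delta,\epsilon}$ through $p_M(\bar\rho_{\delta,\epsilon},\bar\theta_{\delta,\epsilon})=\bar p$. The dangerous terms
\[
\int\rho s\,(\partial_t\bar\theta_{\delta,\epsilon}+u\cdot\nabla\bar\theta_{\delta,\epsilon})
\quad\text{and}\quad
\int\rho\,(\partial_t\bar c_{\delta,\epsilon}+u\cdot\nabla\bar c_{\delta,\epsilon})
\]
are then not absorbed by Gronwall; they vanish outright. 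After $\kappa\to0$, $a\to0$ and $\delta\to0$ they reduce to the commutator $u^\nu\cdot\nabla\bar\theta^\nu_\epsilon-(u^\nu\cdot\nabla\bar\theta^\nu)_\epsilon$. This commutator tends to $0$ in $L^2L^2$ by DiPerna--Lions, because $u^\nu\in L^2H^1$ at fixed $\nu$. It is paired against $\rho^\nu,(\rho s_M)^\nu\in L^2L^2$, which is where $\gamma>2$, \Cref{lemma:rho-s-integrability}, and taking $\nu\to0$ last enter. The outcome is a contraction with no exponential factor, so the sharp profile is reached.

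The price is that the comparison profile now depends on the solution. The relative entropy then controls only $\mathcal{G}(m)$, $\mathcal{E}-\bar{\mathcal{E}}$ and $\mathcal{F}(\rho)$, and $\mathcal{F}(\rho)$ merely confines $\rho$ to $[\rho_*,\rho^*]$. Your last bullet (``identification of the limit'') therefore needs the extra argument from the paper. One passes to the limit in $\partial_t\rho^n+\div m^n=0$, using $m^n\to0$ and $\rho^n_0\to\bar\rho_0$, to obtain $\rho(t)=\bar\rho$.
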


\subsection{Remarks on \Cref{thm:main}} \label{sec:rmkthm1}
\begin{itemize}
    \item Contrary to \cite{kvy21}, we take constitutive relations which allow us to establish the existence of vanishing dissipation limits for the Euler system. 
        In \cite{kvy21} the pressure was taken to be of the form $p(\rho,\theta) = R\rho \theta + C\rho^\gamma$ for some fixed constants $R,C > 0$ and $\gamma > 2$. 
        Notably, this pressure does not provide an a priori estimate on the integrability of the temperature $\theta$ which leads to difficulties in constructing solutions (in the sense of admissible weak solutions \Cref{def:solution}) due to the presence of temperature dependent viscosity.
        The existence of solutions to the Navier--Stokes--Fourier system without such a radiative pressure term is an outstanding open problem \cite{chiodaroli2025long,novotnyexist13}. 
        In this paper our pressure satisfies the assumptions that are included in the Feireisl existence theory \cite{feireislnovotny17,novotnyexist13}.

    \item When the underlying entropy wave obtains just two values (as in the case of a planar contact discontinuity) the weak convergence of density in~\eqref{eq:main-thm-cvs} can be improved to a strong convergence in $L^q(0,T; L^p(\T^3))$ for all $q < \infty$ and $p < \gamma$. See \Cref{rmk:strong-cv-density} for further details.
        
    \item In this paper we suppose that our molecular pressure satisfies $p_M(\rho,\theta) \sim \rho^\gamma$ when $ \theta  \ll\rho $ for some constant $\gamma > 2$. Note that it is known that solutions to the Navier--Stokes--Fourier system~\eqref{eq:nsf-system} exist for any $\gamma > \frac{3}{2}$ \cite{feireislnovotny17}. Here the hypothesis $\gamma>2$ is due to a technical step in the vanishing dissipation limit requiring an estimate of $\rho$ and $\rho s$ in $L^\infty(0,T,L^2(\T^3))$ that is uniform in $\kappa >0$. Our argument for this estimate requires that $\gamma>2$ (see \Cref{lemma:rho-s-integrability} for more details). Of course, it would be interesting to investigate lower values of $\gamma$ and, in particular, the physical case of a monoatomic gas $\gamma=5/3$ which is included in the existence theory. 
    But at the moment that does not seem possible with the method developed here. 

\end{itemize}

\subsection{Proof strategy}
The proof of \Cref{thm:existence} follows quickly after proving a priori estimates on admissible Navier--Stokes--Fourier solutions.

To prove \Cref{thm:main} we use a strategy based on the relative entropy method of Dafermos and DiPerna \cite{MR546634,MR523630}.
Given a convex entropy $\eta(U)$ (for the Euler system $\eta(U) = -\rho s_M$) and a fixed state $V$ we define the relative entropy 
\begin{equation}
\label{eq:def_relative_entropy}
\eta(U|V) := \eta(U) - \eta(V) - \eta'(V)(U-V)
\end{equation}
where the derivative $\eta'$ is taken in the conservative variables (for instance, $(\rho, m, \mathcal{E})$ in the case of the system~\eqref{eq:euler-system}).
Before proving \Cref{thm:main} we must first define two convex functions,
\begin{align}
    \mathcal{G}(x)  & := \begin{cases} 
        | x |^2     & | x | < 1 \\
        2| x | -1   & | x| \geq 1,
    \end{cases} \label{eq:G-coercive} \\
    \mathcal{F}(\rho) &:= \begin{cases}
        \min(\mathcal{G}(\rho - \rho_*),\, \mathcal{G}(\rho - \rho^*)) & \rho \notin [\rho_*, \rho^*] \\
        0 & \text{otherwise,}
    \end{cases} \label{eq:F-coercive}
\end{align}
where 
\begin{equation} \label{eq:density-bounds}
    \rho_* := \essinf_{x \in \T^3}(\bar \rho_0(x))\quad\text{and}\quad \rho^* := \esssup_{x \in \T^3}(\bar \rho _0(x) )
\end{equation}are the extreme densities achieved by the entropy wave $V$. 
To prove \Cref{thm:main} we will actually prove the following more general proposition. 
\begin{proposition} \label{prop:main}
    If an initial data $U_0$ satisfies
    \begin{equation}\label{eq:prop-integrability-for-stability}
        \left(\norm{\rho_0}_{L^\gamma(\T^3)} + \norm{\rho_0\frac{|u_0|^2}{2} + \rho_0e_M(\rho_0,\theta_0)}_{L^1(\T^3)} + \norm{\theta_0}_{L^4(\T^3)} \right)< \infty
    \end{equation}
    then any vanishing dissipation solution $U = (\rho, m, \mathcal{E})$ corresponding to this initial data satisfies
    \begin{multline} \label{eq:main-est} \esssup_{t \in [0,T]} \int_{\T^3} \mathcal{G}(m)+ \mathcal{F}(\rho)\,dx +\norm{\mathcal{E} - \bar{\mathcal{E}}}_{L^\infty(0,T; \mathcal{M}(\T^3))} \\\leq C \sqrt{\int_{\T^3} (-\rho s_M)(U_0| V)\,dx} + C\int_{\T^3} (-\rho s_M)(U_0| V)\,dx.\end{multline}
    Additionally, if we have a sequence of initial data $\{U_0^n\}_{n=1}^\infty$ such that 
    \begin{equation} \label{eq:rel-ent-converge-init-data} 
        \int_{\T^3} (-\rho s_M)(U_0^n| V)\,dx \to 0
    \end{equation}
    then for any sequence of vanishing dissipation limits $\{(\rho^n, m^n, \mathcal{E}^n)\}$ satisfying~\eqref{eq:main-est} we have that
    \begin{equation} \label{eq:main-prop-cvs}
        \begin{aligned}
            \rho^n \wsto&\, \bar\rho \text{ weakly-$\ast$ in }L^\infty(0,T; L^\gamma(\T^3)), \\
            m^n \to&\, 0  \text{ in }L^\infty(0,T; L^{q}(\T^3;\R^3)) \quad \text{for all}\quad q < 2\tfrac{\gamma}{\gamma+1}, \\
            \mathcal{E}^n\to&\, \bar{\mathcal{E}}  \text{ in }L^\infty(0,T;\mathcal{M}(\T^3)),
        \end{aligned}
    \end{equation}
    establishing the uniqueness of the entropy wave of the Euler system~\eqref{eq:euler-system} among vanishing dissipation limits from the Navier--Stokes--Fourier system~\eqref{eq:nsf-system}.
\end{proposition}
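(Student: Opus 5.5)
The plan is a relative entropy argument at the level of the Navier--Stokes--Fourier approximations, passed to the limit in the order $\kappa\to0$, $a\to0$, $\nu\to0$, followed by a coercivity estimate on the limit.

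\textbf{Step 1: relative entropy at fixed dissipation.} Fix the parameters and an admissible solution $U^{\kappa,a,\nu}$ of \eqref{eq:nsf-system}. Following the usual approach for Navier--Stokes--Fourier, I would use the ballistic free energy relative to the stationary profile,
\[
\int_{\T^3}\Big(\tfrac12\rho|u|^2+\rho e-\bar\theta_0\,\rho s\Big)\,dx .
\]
Here $s=s_M+as_R$ is the total entropy. The advantage is that $V$ is stationary with $u=0$, and $\bar p:=p_M(\bar\rho_0,\bar\theta_0)$ is constant. In the time derivative of this quantity the only terms that involve the profile are then the following.
\begin{itemize}
\item Transport terms $\rho s\,u\cdot\nabla\bar\theta_0$ and the heat flux pairing $\frac{\kappa\mathbf q}{\theta}\cdot\nabla\bar\theta_0$.
\item The nonpositive entropy production $-\bar\theta_0\sigma$.
\end{itemize}
The energy equality and the entropy inequality of \Cref{def:solution} make this rigorous.

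To kill the transport term I would use the $a$-contraction idea. Write $\bar\theta_0$ through the entropy wave relation $\bar p=p_M(\bar\rho_0,\bar\theta_0)$, and add the constant-pressure weight: $\int\bar p\,\rho\,dx$ is conserved, and the term $\int\bar\rho_0\ldots$ is handled through the continuity equation. The aim is that the cross terms combine into
\[
\int (p-\bar p)\,\div u\,dx ,
\]
up to quadratic remainders controlled by the relative entropy itself. The dissipative cross terms $\nu\S:\nabla\bar\theta_0$-type and $\kappa\mathbf q\cdot\nabla\bar\theta_0/\theta$ are bounded by Young's inequality against $\sigma$, at a cost that vanishes as $\kappa,\nu\to0$. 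The radiative parts are $O(a)$ by the a priori estimates. A Gronwall argument then gives, uniformly in $t\le T$,
\[
\int(-\rho s)(U|V)(t) \le C\int(-\rho s)(U_0|V) + o(1).
\]

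\textbf{Step 2: passing to the limit.} The key is that relative entropy is convex in the conservative variables. Weak-$*$ lower semicontinuity then bounds the limit quantity by $\liminf$ of the approximating ones. The energy part $\mathcal E$ is a measure, so I would work with the measure-valued version of $(\rho,m,\mathcal E)\mapsto\eta(U|V)$, which is convex and $1$-homogeneous at infinity. The uniform $L^2$ bounds on $\rho,\rho s$ from \Cref{lemma:rho-s-integrability} (this is where $\gamma>2$ enters) justify passing the linear-in-$U$ terms to the limit.

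\textbf{Step 3: coercivity.} Here I would show
\[
\int\mathcal G(m)+\mathcal F(\rho)+\|\mathcal E-\bar{\mathcal E}\|_{\mathcal M}
\;\lesssim\;\sqrt{R}+R,
\qquad R=\int(-\rho s_M)(U|V).
\]
\begin{itemize}
\item The relative entropy dominates $|m|^2/\rho$ and a quadratic/linear growth in the thermodynamic variables.
\item Cauchy--Schwarz, $|m|\le\sqrt{\rho}\sqrt{|m|^2/\rho}$, gives the $\mathcal G(m)$ bound, which is where the square root appears.
\item The density is only controlled modulo the interval $[\rho_*,\rho^*]$. The reason is that along $p=\bar p$ the relative entropy is degenerate in $\rho$, as contact waves show. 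This explains the use of $\mathcal F$.
\item For the energy, the total energy is conserved, so $\mathcal E-\bar{\mathcal E}$ has zero mass. Its total variation is controlled through the pressure deviation and the kinetic energy. Since $\rho e_M\approx\frac{p_M}{\gamma-1}$-type relations hold, I would need the constitutive assumptions of \Cref{sec:con-rel}.
\end{itemize}

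\textbf{Step 4: convergence.} Apply the estimate to $U^n$ and use \eqref{eq:rel-ent-converge-init-data}. This gives $m^n\to0$ and $\mathcal E^n\to\bar{\mathcal E}$ directly, with $L^q$ convergence of $m^n$ obtained by interpolating with the uniform $L^{2\gamma/(\gamma+1)}$ bound. For the density, $\rho^n$ is bounded in $L^\gamma$. Any weak limit $\rho$ satisfies the continuity equation with $m=0$, so $\partial_t\rho=0$ and $\rho=\bar\rho_0$ from the initial data.

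\textbf{Main obstacle.} I expect Step 1 to be the hardest: cancelling the transport term against a weight built from the non-smooth, possibly discontinuous $\bar\theta_0$. If $\bar\theta_0$ is merely bounded, one must mollify the profile and control the commutator errors uniformly in the dissipation parameters.
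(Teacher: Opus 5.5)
Your Step 1 has a genuine gap, and the inequality it aims for is false for the discontinuous waves the proposition covers.

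\textbf{Why the stationary profile fails.} With $V$ stationary and $\bar p$ constant, the Gibbs relation gives $\nabla\bar c=-\bar s\,\nabla\bar\theta_0$, where $\bar c=\bar e-\bar\theta\bar s+\bar p/\bar\rho$. After the energy equality, the entropy inequality and the continuity equation, the profile enters $\frac{d}{dt}\int\bar\theta_0(-\rho s)(U|V)\,dx$ only through $-\int\rho(s-\bar s)\,u\cdot\nabla\bar\theta_0\,dx$, the heat-flux pairing, and radiative terms.
\begin{itemize}
\item This term is quadratic in the perturbation but carries the factor $\nabla\bar\theta_0$, which is a measure for a contact discontinuity.
\item Gronwall then needs $\|\nabla\bar\theta_0\|_{L^\infty}$, so you only recover classical weak--strong uniqueness for Lipschitz waves. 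Mollifying a stationary profile costs $e^{CT/\epsilon}$ against an initial error of order $\epsilon$.
\item No identity turns this term into $\int(p-\bar p)\div u\,dx$. Even if one did, it could not be absorbed, since $\div u$ is controlled only through $\nu\|\nabla u\|_{L^2}^2$.
\end{itemize}
The target estimate is also false outright. Boost a planar contact discontinuity without shear by a small constant velocity $\eta e_1$. By Galilean invariance of \eqref{eq:nsf-system} on $\T^3$, applied to approximations of the unperturbed wave (whose limit is $V$ itself), you get a vanishing dissipation solution equal to the wave translated by $\eta t$. Then $\int(-\rho s)(U(t)|V)\,dx\gtrsim\eta t$, while $\int(-\rho s)(U_0|V)\,dx\sim\eta^2$.

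\textbf{The missing idea is the shift.} The paper transports the profile with the fluid. It solves $\partial_t\bar\theta_\delta+u_\delta\cdot\nabla\bar\theta_\delta=0$ with $u_\delta=u\ast\varphi_\delta$, sets $\bar\theta_{\delta,\epsilon}=\bar\theta_\delta\ast\varphi_\epsilon$, and defines $\bar\rho_{\delta,\epsilon}$ by $p_M(\bar\rho_{\delta,\epsilon},\bar\theta_{\delta,\epsilon})=\bar p$.
\begin{itemize}
\item The profile then enters only through the transport defect $\partial_t\bar\theta_{\delta,\epsilon}+u\cdot\nabla\bar\theta_{\delta,\epsilon}$, paired with $\rho$ and $\rho s$.
\item After $\delta\to0$ this defect is a DiPerna--Lions commutator, which tends to $0$ in $L^2L^2$. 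No Gronwall is needed, because $\rho,\rho s\in L^2$; this is why $a\to0$ must precede the mollification limits, and where $\gamma>2$ enters.
\item The limits $\delta,\epsilon\to0$ must be taken at fixed $\nu>0$, because the commutator lemma uses $\|u\|_{L^2H^1}\lesssim\nu^{-1/2}$. So, contrary to your plan, these errors are not uniform in the dissipation parameters.
\end{itemize}

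\textbf{Consequence for your Step 3.} The shift also explains the form of \eqref{eq:main-est}, which your Step 3 misreads. $(-\rho s)(\cdot|V)$ is strictly convex and is not degenerate in $\rho$. What is lost is the position of the shifted profile, so only shift-invariant quantities survive: $m$ (since $\bar u=0$), $\mathcal E-\bar{\mathcal E}$ (since $\bar{\mathcal E}=\bar p/(\gamma-1)$ is constant), and the distance of $\rho$ to $[\rho_*,\rho^*]$. That is why one gets $\mathcal F(\rho)$ and only weak convergence of $\rho^n$.

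Your Step 4 matches the paper's argument.
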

The proof of \Cref{prop:main} is based on the method of $a$-contraction with shifts, which was pioneered by the third author and his collaborators in the inviscid case \cite{MR2807139,kang2016criteria}. 
This is a robust and fully non-perturbative stability method for conservation laws to handle singular patterns, able to handle perturbations of both initial data and the system. 
It has successfully been used to obtain uniform in the viscosity stability estimates of viscous shocks \cite{kang2020contraction}, perform the inviscid limit from Navier--Stokes to Euler in 1-d \cite{NSVS,kang2021uniqueness}, take the inviscid limit in 3-d for contact discontinuities \cite{kvy21}, and establish the time asymptotic stability of the Riemann problem for the Navier--Stokes--Fourier system \cite{kang2025time}.

We now discuss the proof.
The relative entropy $\eta(\cdot | V)$ for a fixed state $V$ is another entropy of our system~\eqref{eq:euler-system} and due to the strict convexity of $\eta$ in the conservative variables it (locally) satisfies the quadratic estimate $\eta(U|V) \sim |U-V|^2$, where $a\sim b$ is understood to mean that there exists $C > 0$ such that $C^{-1}b \leq a \leq Cb$. 
In the 1-d case for a single discontinuity, we let 
$$ V(t,x) = \begin{cases}
    V_L & x < h(t) \\
    V_R & x > h(t)
\end{cases} $$
be a discontinuity shifted by $h$. 
It follows that for all bounded functions $U$ and constant weights $a_L, a_R > 0$ we have
\begin{equation} \label{eq:relative-entropy-1d}
    \norm{U(t,\cdot)-V(t,\cdot)}_{L^2}^2 {\sim} \int_{-\infty}^{h(t)} a_L\eta(U|V_L)\,dx + \int_{h(t)}^\infty a_R\eta(U|V_R)\,dx \qquad 
\end{equation}
and, provided that we can produce weights $a_L,a_R$ and a shift $h$ such that the right of~\eqref{eq:relative-entropy-1d} is decreasing for all perturbations $U$ in some class, we can establish that the discontinuity $V_L,V_R$ is stable up to a shift in $L^2$ within this class. 
When $V_{LR}$ are the states of a contact discontinuity of the full Euler system with entropy $ \eta(U) = -(\rho s_M)(U)$ we have this stability when $a_{LR} = \theta_{LR}$ (the temperatures associated to the states $V_{LR}$) and the shift satisfies $\dot h(t) = u(t,h(t))$, the velocity associated to the weak solution $U$ (provided that $u$ is sufficiently regular; see \cite[Section 4]{serre2014relative} and \cite[Corollary 6.2]{serre20142} for further details). 

In analogy with this, one would anticipate that in the case of multi-dimensional entropy waves we could simply solve the transport equation 
\begin{equation} \left\{\begin{aligned}
    \partial_t \bar V + u\cdot \nabla \bar V &= 0 \\ 
    \bar V(0,\cdot) &= V(0,\cdot)
\end{aligned}\right. \end{equation}
where $V$ is the entropy wave~\eqref{eq:V-form} to get a satisfactory notion of shift.
From this we would hope to show that the quantity
\begin{equation} \label{eq:weighted-euler-example} 
\int_{\T^3} \theta(\bar V(t,x)) (-\rho s) (U| \bar V)\,dx \end{equation}
is decreasing in time.
However, this cannot be done at the level of the inviscid Euler system for $d \geq 2$. 
Indeed, as previously mentioned there is no general existence theory for global in time weak solutions to~\eqref{eq:euler-system} and, additionally, we would need that the associated velocity field is at least Sobolev to ensure the above transport equation possesses a solution. 
These issues are alleviated by instead considering weak solutions to the Navier--Stokes--Fourier system~\eqref{eq:nsf-system},  for which  $ u \in L^2(0,T; H^1(\T^3))$.
Formally, solutions to the Navier--Stokes--Fourier system converge to those of the full Euler system~\eqref{eq:euler-system} in the vanishing dissipation limit, so by taking $\kappa,a,\nu\to 0$ we may hope to establish stability among the class of vanishing dissipation solutions.

To realize the above strategy, we fix a vanishing dissipation solution $U$ and let $U^{\kappa, a, \nu}$ be the sequence of Navier--Stokes--Fourier solutions converging to $U$ in the sense of \Cref{def:vanishing-diss-soln}.
We then first prove a stability bound between each solution $U^{\kappa, a, \nu}$ and our entropy wave $V$.
Note we now stand to gain weighted relative entropy due our entropy wave $V$ solving the Euler system~\eqref{eq:euler-system} while $U^{\kappa, a, \nu}$ solves the system~\eqref{eq:nsf-system} with a different pressure and dissipative terms. 
To compute the change of the weighted relative entropy~\eqref{eq:weighted-euler-example}, we modify it by replacing the shifted entropy wave $ V$ with a smoothed version which we denote $\bar V_\epsilon $.
This turns our attention to the regularized quantity
\begin{equation} \label{eq:weighted-euler-example-mollified} F_\epsilon(t) :=\int_{\T^3} \bar \theta_\epsilon (-\rho s) (U^{\kappa,a,\nu}| V_\epsilon^a)\,dx, \end{equation}
where $\bar \theta_\epsilon$ is the temperature of our regularized shifted front $V_\epsilon$.
By doing this, we are now multiplying the relative entropy by a smooth function.
In \Cref{lem:rel-entropy-shifts} we use the weak form of the entropy inequality and continuity equation to obtain a bound resembling
\begin{equation} \label{eq:proof-strat-bound}
    \begin{aligned}
        F_\epsilon(t) - F_\epsilon(0) &+ \int_0^t \int_{\T^3}  \kappa \frac{1+\theta^3}{\theta^2}|\nabla \theta|^2 + \nu |\nabla u|^2 \,dx\,d\tau \\
        \leq&\, C \kappa \int_0^t \int_{\T^3} \frac{1+\theta^3}{\theta} |\nabla \theta | |\nabla \bar\theta_{\delta,\epsilon}|\,dx\,d\tau   +  Ca\\
        &+ C\left|\int_0^t \int_{\T^3} \rho [\partial_t \bar V^a_{\epsilon} + u\cdot \nabla \bar V^a_{\epsilon}] \,dx\,d\tau\right|\\
        &+ C\left|\int_0^t \int_{\T^3} (- \rho s) [\partial_t \bar V^a_{\epsilon} + u\cdot \nabla \bar V^a_{\epsilon}] \,dx\,d\tau\right|,
    \end{aligned}
\end{equation}
where $\theta$ and $u$ are primitive variables associated to the Navier--Stokes--Fourier solution $U^{a,\kappa,\nu}$.
From this we deduce the bound~\eqref{eq:main-est} after showing that the right side of~\eqref{eq:proof-strat-bound} vanishes as $\kappa$, $a$, and $\nu$ converge to zero along a sequence of solutions $ U^{\kappa,a,\nu}\to U $ as in \Cref{def:vanishing-diss-soln} and proving that $F_\epsilon(t)$ is coercive to the quantities on the left side of~\eqref{eq:main-est} (see \Cref{lemma:coercive}).
We note that due to the mollification of $V$, the function $\bar V_\epsilon$ does not solve the transport equation with velocity $u$. 
As a result, this term appears as a commutator and, since $u \in L^2(0,T; H^1(\T^3))$, converges to zero in $L^2(0,T; L^2(\T^3))$. 
From this, the final integral converges to zero provided that $\rho,\rho s \in L^2(0,T;L^2(\T^3))$. 
This is given by the constitutive relations we select in \Cref{sec:con-rel} and \Cref{lemma:rho-s-integrability}, provided that $\gamma > 2$.
For further discussion of this limit and some explanation behind why we sequentially take $\kappa \to0,$ $a\to0$, then $\nu\to0$ in our vanishing dissipation limit, see the discussion in \Cref{sec:diss-limit}.

To determine the uniqueness of the entropy wave we appeal to the estimate~\eqref{eq:main-est}.

Since $\bar{\mathcal{E}}$ and $\bar u = 0$ are constants, we immediately obtain strong convergence of $m^n \to 0$ and $\mathcal{E}^n \to \bar {\mathcal{E}}$ in appropriate norms. 
For the density we note that $\mathcal{F}$ is not coercive.
This is because, unlike the energy and momentum, the entropy wave may have non-constant density.
Alone, it only shows that the weak limit $\rho$ takes values a.e. in an interval $[\rho_*, \rho^*]$. 
However, since $m_n \to 0$ and each $\rho_n, m_n$ distributionally satisfy the continuity equation, we can conclude that the weak limit $\rho$ satisfies 
$$  \partial_t \rho = 0.$$
Additionally, we can conclude by the convergence of the initial data in relative entropy~\eqref{eq:rel-ent-converge-init-data} that $\rho(0) = \bar \rho(0)$. 
Hence, the weak limit $\rho$ in fact coincides with the density of the entropy wave $\bar \rho$.

\vspace{1em}
The remainder of the paper is organized as follows. 
In \Cref{sec:prelim} we give details about the assumptions on the constitutive relations for the Navier--Stokes--Fourier system~\eqref{eq:nsf-system}, define our notion of admissible solutions for the Navier--Stokes--Fourier system, prove several a priori bounds verified by these solutions, and prove \Cref{thm:existence}. 
\Cref{sec:proof} is dedicated to the proof of \Cref{prop:main} where we construct the shift, find the form and coercive quantities of the relative entropy, and take the vanishing dissipation limit. 

\section{Preliminaries} \label{sec:prelim}
In this section, we give our assumptions on the constitutive relations of the Navier--Stokes--Fourier system, state an associated existence theorem for the Navier--Stokes--Fourier system, prove a priori bounds associated to these solutions, and establish \Cref{thm:existence}. 
For a physical motivation behind these constitutive relations, we refer the reader to \cite[Section 1.3]{feireislnovotny17}.

\subsection{Constitutive relations} \label{sec:con-rel}
In the following $\Lambda >0$ denotes some positive constant. 
We suppose that our viscous stress tensor satisfies Newton's rheological law, giving the form
$$ \S = \mu(\theta) \left(\nabla u + \nabla^T u - \frac{2}{3} \div(u) I\right) + \lambda(\theta) \div(u) I, $$
where $\mu,\lambda\in C^1( [0,\infty))$the shear and bulk viscosities, respectively.
We assume that they are globally Lipschitz and satisfy the bounds 
\begin{equation} \label{eq:visc-assum} \frac{1 + \theta}{\Lambda} \leq \mu(\theta) \leq \Lambda (1 + \theta) \quad \text{and}\quad 0 \leq \lambda(\theta) \leq \Lambda (1+\theta).
\end{equation} 
Further, we suppose that our heat flux $\mathbf{q}$ satisfies Fourier's law, 
\begin{equation}\label{eq:heat-flux} \mathbf{q}(\theta,\nabla \theta) = - K(\theta) \nabla \theta, \end{equation}
where the heat conductivity coefficient $K \in C^1( [0,\infty))$ enjoys the bounds 
\begin{equation}\label{eq:heat-cond-assum} \frac{1 }{\Lambda} (1 + \theta^3) \leq K(\theta) \leq  \Lambda (1+\theta^3). \end{equation}
As mentioned in the introduction, the pressure we consider takes the form $p=p_M+ap_R$ with the molecular and radiative pressures defined by
\begin{equation} \label{eq:pressure} p(\rho,\theta) = \theta^{\frac{\gamma}{\gamma - 1}} P\left( \frac{\rho}{\theta^{\frac{1}{\gamma - 1}}}\right) + \frac{a}{3} \theta^4 =: p_{M}(\rho,\theta) + ap_{R}(\theta), \end{equation}
where $a > 0$, $\gamma > 2$, and $P \in C^1([0,\infty))$ is an increasing function satisfying $P(0) = 0$. 
We further suppose that there exists a constant $\underline Z > 0$ such that $P(Z) = Z$ for all $Z \in [0, \underline Z]$. 

The associated specific internal energy has the form
\begin{equation} \label{eq:energy}
e(\rho,\theta) = \frac{1}{\gamma - 1} \frac{\theta^{\frac{\gamma}{\gamma - 1}}}{\rho} P\left( \frac{\rho}{\theta^{\frac{1}{\gamma - 1}}}\right) + a \frac{\theta^4}{\rho} =:e_M + ae_R,
\end{equation}
and the associated specific entropy is 
\begin{equation} \label{eq:entropy-def} s(\rho,\theta) = \mathcal{S}\left( \frac{\rho}{\theta^{\frac{1}{\gamma - 1}}}\right) + \frac{4a}{3} \frac{\theta^3}{\rho} =: s_M + as_R,\end{equation}
where $\mathcal{S}$ satisfies 
\begin{equation} \label{eq:Sdef}
    \mathcal{S}'(Z) = - \frac{1}{\gamma - 1} \frac{\gamma P(Z) - P'(Z)Z}{Z^2} < 0.
\end{equation}
These equations of state satisfy the Gibbs relation, 
\begin{equation} \label{eq:gibbs-relation}
    \theta \,ds = de - \frac{p}{\rho^2}\,d\rho.
\end{equation}
Let us assume that $s$ satisfies the third law of thermodynamics, which amounts to requiring that
\begin{equation} \label{eq:third-law}
    \lim_{Z\to \infty} \mathcal{S}(Z) = 0.
\end{equation}
We additionally assume that these functions satisfy the hypotheses of thermodynamic stability, 
\begin{equation}\label{eq:thermo-stab} 
    \frac{\partial p}{\partial \rho} >0,\quad \frac{\partial e}{\partial \theta} > 0\quad \text{for all}\quad\rho,\ \theta > 0.
\end{equation}
From the hypotheses of thermodynamic stability and taking that the molecular component of the specific heat at constant volume ($c_{v,M} = \frac{\partial e_M}{\partial \theta}$) is uniformly bounded, we have for all $Z > 0$
$$ 0 < \frac{\gamma P(Z) - P'(Z)Z}{Z} < C. $$
This inequality shows that $P(Z)/Z^\gamma$ is decreasing.
Finally, our last assumption is that
\begin{equation} \label{eq:Pinf-def} 0 < \lim_{Z\to \infty} \frac{P(Z)}{Z^\gamma} =: P_\infty.\end{equation}
 Note that from~\eqref{eq:Sdef} since $P(Z) = Z$ for all $Z \leq \underbar Z$ the specific molecular entropy $s_M$ takes the form, for some constant $C$,
\begin{equation} \label{eq:S-form}
    s_M(Z)= C - \ln\left( Z \right),  \quad \hbox{for all } Z < \underbar{Z}.
\end{equation}

\subsection{Admissible solutions to the Navier--Stokes--Fourier system}
In this section we state a notion of admissible solutions to the system~\eqref{eq:nsf-system} and an associated existence theorem associated to our constitutive relations in \Cref{sec:con-rel}. These results are from \cite{novotnyexist13,feireislnovotny17}.
We first define a notion of weak solution to the system~\eqref{eq:nsf-system}.
\begin{definition} \label{def:solution}
    A trio $(\rho, u,\theta)$ is a weak solution to the Navier--Stokes--Fourier system~\eqref{eq:nsf-system} if 
    \begin{enumerate}
        \item The density and temperature satisfy $\rho(t,x) \geq 0,\ \theta(t,x) > 0$ a.e. in $(0,T)\times \T^3$,
            $\rho \in L^\infty(0,T; L^{\gamma}(\T^3))$, $\rho u \in L^\infty (0,T; L^{\frac{2\gamma}{\gamma + 1}}(\T^3;\R^3) )$,
            $\rho |u|^2 \in L^\infty(0,T; L^1(\T^3))$, $\theta \in$ \\$ L^\infty(0,T; L^4(\T^3))\cap L^2(0,T; H^1(\T^3))$, 
            $\nabla \log(\theta) \in L^2(0,T; L^2(\T^3;\R^3))$, and \\$u \in L^2(0,T; H^1(\T^3)) $. 

        \item $\rho \in C_{weak}([0,T]; L^\gamma(\T^3))$ and weakly satisfies the continuity equation,
        \begin{equation} \label{eq:nsf-cont}
            \left.\int_{\T^3} \rho \phi \,dx \right|_0^\tau = \int_0^\tau \int_{\T^3} \rho \partial_t \phi + \rho u \cdot \nabla \phi\,dx\,dt
        \end{equation}
        for all $\tau \in [0,T]$ with $\phi \in C^1([0,T] \times \T^3).$

        \item $\rho u \in C_{weak}([0,T]; L^{\frac{2\gamma}{\gamma + 1}}(\T^3;\R^3))$ and weakly satisfies the momentum equation, 
        \begin{equation}
            \begin{aligned}
            \left.\int_{\T^3} \rho u\cdot \phi \,dx \right|_0^\tau =& \int_0^\tau \int_{\T^3} \rho u\partial_t \phi + \rho u\otimes u \cdot \nabla \phi\,dx\,dt \\ 
            &+ \int_0^\tau \int_{\T^3} p(\rho, \theta)\div(\phi) - \nu\,\S(\theta,\nabla u):\nabla \phi \,dx\,dt
            \end{aligned}
        \end{equation}
        for all $\tau \in [0,T]$ with $\phi \in C^1([0,T] \times \T^3; \R^3)$.

        \item We have a distributional entropy inequality 
        \begin{equation} \label{eq:nsf-entropy-inequality}
            \begin{aligned}
            -\left.\int_{\T^3} \rho s(\rho,\theta) \phi \,dx \right|_0^\tau &+ \int_0^\tau \int_{\T^3} \frac{\phi}{\theta} \left(\nu\,\S(\theta,\nabla u): \nabla u - \kappa\frac{\mathbf{q}(\theta,\nabla \theta)\cdot \nabla \theta}{\theta}\right) \,dx\,dt \\
            \leq& -\int_0^\tau \int_{\T^3} \rho s(\rho, \theta) \partial_t \phi + \rho s(\rho,\theta) u\cdot \nabla \phi + \kappa\frac{\mathbf{q}(\theta,\nabla \theta)\cdot \nabla \phi}{\theta} \,dx\,dt
            \end{aligned}
        \end{equation}
        for a.e. $\tau \in [0,T]$ with $\phi \in C^1([0,T] \times \T^3),$ $\phi \geq 0$. 

        \item We have conservation of energy,
        \begin{equation} \label{eq:E-con}
            \int_{\T^3} \left[\frac{1}{2} \rho |u|^2 + \rho e(\rho,\theta)\right](\tau)\,dx = \int_{\T^3} \left[\frac{1}{2} \rho |u|^2 + \rho e(\rho,\theta)\right](0)\,dx
        \end{equation}
        for a.e. $\tau \in [0,T]$.
    \end{enumerate}
\end{definition}

\begin{proposition}[{\cite{feireislnovotny17,novotnyexist13}}] \label{prop:nsf-exist}
Fix the system~\eqref{eq:nsf-system} satisfying the constitutive relations of \Cref{sec:con-rel} and an initial data $U_0 =(\rho_0,  m_0, \mathcal{E}_0)$ for which there exists $\theta_0$ where $\mathcal{E}_0 = \frac{|m_0|^2}{2\rho_0} + \rho_0 e(\rho_0,\theta_0)$.
If $U_0$ satisfies that
\begin{align*} 
    \rho_0 \geq 0,\quad\rho_0 \in L^{\gamma}(\T^3),\quad &\int \rho_0\,dx > 0,\\
    m_0 = 0\ \text{a.e. on the set }&\{x\in \T^3\mid\rho_0(x) = 0\} \\
    \quad\theta_0 > 0\text{ a.e. in }\T^3,\ &\rho_0s(\rho_0,\theta_0) \in L^1(\T^3),\\
    \int \mathcal{E}_0\,dx = \int &\frac{|m_0|^2}{2\rho_0} + \rho_0 e(\rho_0,\theta_0)\,dx<\infty,
\end{align*}
then there exists a solution $(\rho, u, \theta)$ to the Navier--Stokes--Fourier system~\eqref{eq:nsf-system} with this initial data in the sense of \Cref{def:solution}. 
\end{proposition}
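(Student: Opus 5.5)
The statement is quoted from the existence theory of Feireisl and Novotn\'{y} \cite{feireislnovotny17,novotnyexist13}, so the plan is to verify that our setting meets the hypotheses of their global existence theorem for weak solutions of the full Navier--Stokes--Fourier system (their Theorem~3.1 in \cite{feireislnovotny17}), and to account for the differences in presentation. We do not re-prove existence. The first step removes the parameters $\nu,\kappa,a$. For fixed $\nu,\kappa>0$ we set $\tilde\mu=\nu\mu$, $\tilde\lambda=\nu\lambda$ and $\tilde K=\kappa K$. These again satisfy \eqref{eq:visc-assum} and \eqref{eq:heat-cond-assum}, with $\Lambda$ replaced by $\tilde\Lambda=\Lambda\max(\nu,\nu^{-1},\kappa,\kappa^{-1})$. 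The radiative constant $a>0$ is already a free positive constant in their framework. Thus the system~\eqref{eq:nsf-system} is exactly of their form, with transport coefficients that are globally Lipschitz, $C^1$, and have the required growth $\mu\sim 1+\theta$, $0\le\lambda\lesssim 1+\theta$, $K\sim 1+\theta^3$.

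The second step checks the thermodynamic hypotheses. We need four facts.
\begin{itemize}
\item The pressure has the form \eqref{eq:pressure}: $p_M=\theta^{\gamma/(\gamma-1)}P(\rho/\theta^{1/(\gamma-1)})$ plus $\tfrac a3\theta^4$. The energy \eqref{eq:energy} and entropy \eqref{eq:entropy-def} are linked through the Gibbs relation \eqref{eq:gibbs-relation}. Their theory treats the monatomic case $\gamma=5/3$, but the arguments depend only on the homogeneity structure, which holds for any $\gamma>3/2$, as noted in \Cref{sec:rmkthm1}.
\item $P\in C^1$, $P(0)=0$, and $P$ is increasing. Thermodynamic stability \eqref{eq:thermo-stab} holds.
\item The quotient $(\gamma P-P'Z)/Z$ is positive and bounded, which is the bounded molecular specific heat. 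The limit $P(Z)/Z^\gamma\to P_\infty>0$ holds by \eqref{eq:Pinf-def}.
\item The third law \eqref{eq:third-law} holds.
\end{itemize}
These are precisely their structural hypotheses. The linear behaviour $P(Z)=Z$ near $0$ is only a convenient normalization and is consistent with them.

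The third step checks the initial data. We need $\rho_0\ge0$, $\rho_0\in L^\gamma$ and positive total mass. We need $m_0=0$ where $\rho_0=0$, so that $|m_0|^2/\rho_0$ is well defined. We need $\theta_0>0$ a.e. We need finite total energy, which gives $\theta_0\in L^4$ through the term $a\theta_0^4$. Finally we need $\rho_0 s(\rho_0,\theta_0)\in L^1$. These coincide with the initial data requirements of their theorem. Their output then has the regularity listed in \Cref{def:solution}(1):
\begin{itemize}
\item $\rho\in L^\infty L^\gamma$ and $\theta\in L^\infty L^4\cap L^2H^1$;
\item $\nabla\log\theta\in L^2L^2$, from the entropy production together with $K\gtrsim 1$;
\item $u\in L^2H^1$, from $\mu\gtrsim1$ and a Korn-type inequality;
\item $\rho u\in L^\infty L^{2\gamma/(\gamma+1)}$, by H\"older from $\sqrt\rho u\in L^\infty L^2$ and $\rho\in L^\infty L^\gamma$.
\end{itemize}
Their weak formulation provides the continuity, momentum, entropy inequality and energy identity (2)--(5).

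The main obstacle is the domain. Their theorem is stated on bounded Lipschitz domains with no-slip or Navier-slip and no-flux boundary conditions, whereas we work on $\T^3$. We would argue that the periodic case is strictly simpler. The whole approximation scheme (Faedo--Galerkin for momentum, artificial viscosity in the continuity equation, artificial pressure $\delta\rho^\beta$, and the limits $\delta,\epsilon\to0$ via the effective viscous flux and oscillation defect measures) uses only integration by parts without boundary terms. On the torus these terms vanish identically. The Bogovskii operator used for pressure estimates is replaced by $\nabla\Delta^{-1}$ applied to mean-zero functions. The energy identity holds as an equality since no boundary work is done. With these remarks the cited result applies verbatim and yields the proposition.
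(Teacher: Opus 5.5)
Your proposal is correct and takes essentially the same approach as the paper, which gives no argument of its own for this proposition and simply imports it from \cite{feireislnovotny17,novotnyexist13}; your hypothesis-by-hypothesis check (absorbing $\nu,\kappa$ into the transport coefficients, the structural conditions on $P$, the conditions on the data, and the regularity in \Cref{def:solution}) is exactly what that citation implicitly requires. The two extensions you identify are asserted rather than proved in your write-up: the passage from the monatomic exponent $\gamma=5/3$ to general $\gamma>2$, and from bounded domains to $\T^3$. The paper relies on the same extensions (see \Cref{sec:rmkthm1}), so this is not a gap relative to its argument.
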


\subsection{A priori bounds on admissible solutions} \label{sec:prelim-bounds}
Suppose $(\rho, u, \theta)$ is an admissible solution to the system~\eqref{eq:nsf-system} in the sense of Definition \ref{def:solution} for times $0 \leq t \leq T$.
{In this section, we are interested in finding a priori bounds which allow us to take terms on the right side of inequality~\eqref{eq:proof-strat-bound} to zero in the vanishing dissipation limit. 
In addition to this, these bounds will be used to obtain uniform (in $\kappa, \nu, a$) bounds of $\rho$, $m$, and $\mathcal{E}$ to provide the weak limits of solutions $(\rho,m,\mathcal{E})$. }

We first note that by thermodynamic stability~\eqref{eq:thermo-stab} and~\eqref{eq:Pinf-def} we have 
\begin{equation} \label{eq:rho-gamma}
    \begin{aligned}
        \rho e_M(\rho,\theta) &= \frac{1}{\gamma - 1} \theta^{\gamma/(\gamma -1)}P(\rho/\theta^{1/(\gamma -1)}) \\
        &\geq \lim_{\theta \to 0}\frac{1}{\gamma - 1} \theta^{\gamma/(\gamma -1)}P(\rho/\theta^{1/(\gamma -1)}) = \frac{1}{\gamma-1}P_\infty \rho^\gamma.
    \end{aligned}
\end{equation}
Furthermore, from the conservation of energy~\eqref{eq:E-con} and constitutive relations~\eqref{eq:energy} we find 
\begin{multline} \label{eq:bounds-from-energy}
        a \norm{\theta}_{L^\infty(0,T;L^4(\T^3))}^4+\dfrac{1}{\gamma-1}\norm{p_M}_{L^\infty(0,T;L^1(\T^3))}  + {\dfrac{1}{2}} \norm{\rho|u|^2}_{L^\infty(0,T;L^1(\T^3))} \\\leq C\int_{\T^3} \frac{1}{2} \rho(x, 0)|u(x, 0)|^2+(\rho e)(x, 0)\,dx
\end{multline}
for some $C > 0$, independent of $a$, $\kappa$, and $\nu$. 
In particular, combining this with~\eqref{eq:rho-gamma} and $\rho e_M = \tfrac{1}{\gamma - 1}p_M$ we have 
\begin{equation} \label{eq:rho-integability}
    \norm{\rho}_{L^\infty(0,T; L^\gamma(\T^3))}^\gamma \leq C  \int_{\T^3} \frac{1}{2} \rho(x, 0)|u(x, 0)|^2+(\rho e)(x, 0)\,dx.
\end{equation}

Next, testing our entropy inequality~\eqref{eq:nsf-entropy-inequality} with the constant function $\phi = 1$ we find that
\begin{multline} \label{eq:diss-bound-from-entropy}
    \int_0^T \int_{\T^3} \frac{1}{\theta} \left(\nu\,\S(\theta,\nabla u):\nabla u - \kappa \frac{\mathbf{q}(\theta,\nabla\theta)\cdot \nabla \theta}{\theta}\right)\,dx \,dt\\ \leq \left.\int_{\T^3} \rho s(\rho,\theta)\,dx \right|_{t = T} - \left.\int_{\T^3}  \rho s(\rho,\theta)\,dx \right|_{t = 0} . 
\end{multline}
To bound the right hand side, we note that $\rho s_R \in L^\infty(0,T;L^{4/3}(\T^3))$ by~\eqref{eq:bounds-from-energy}.
For the molecular entropy we find a bound based on the integrability of $\rho$. 
Next, we show that the form of our pressure $p_M$ and the third law of thermodynamics~\eqref{eq:third-law} provide a bound on $\rho s_M$ in $L^\infty(0,T; L^2(\T^3))$. 
\begin{lemma} \label{lemma:rho-s-integrability}
    Suppose $(\rho, u,\theta)$ is an admissible solution (\Cref{def:solution}) to a Navier--Stokes--Fourier system satisfying the constitutive relations of \Cref{sec:con-rel}.
    We then have the bound
    \begin{equation}
        \norm{\rho s_M(\rho,\theta)}_{L^\infty(0,T; L^2(\T^3))} \leq C,
    \end{equation}
    where $C$ depends on the initial total energy $\norm{\frac{1}{2}\rho_0 |u_0|^2 + \rho_0e(\rho_0,\theta_0)}_{L^1(\T^3)}$ and $\gamma$.
\end{lemma}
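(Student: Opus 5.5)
We bound $\rho s_M(\rho,\theta) = \rho\,\mathcal{S}(Z)$, with $Z = \rho/\theta^{1/(\gamma-1)}$, pointwise by splitting according to the size of $Z$. The available a priori bounds are \eqref{eq:rho-integability}, which gives $\rho \in L^\infty(0,T;L^\gamma(\T^3))$ with $\gamma>2$, and \eqref{eq:bounds-from-energy}, which gives $p_M \in L^\infty(0,T;L^1(\T^3))$. Both depend only on the initial total energy. The plan is to show
\[
|\rho\,\mathcal{S}(Z)| \le C\big(1+\rho^{\gamma/2} + p_M(\rho,\theta)^{1/2} + \rho\big),
\]
possibly with an extra logarithmic factor that can be absorbed. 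Squaring and integrating then gives $\|\rho s_M\|_{L^2}^2 \le C\int (1+\rho^\gamma + p_M)\,dx$, which is bounded uniformly in time.

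\emph{Large $Z$.} By the third law \eqref{eq:third-law} and the monotonicity $\mathcal{S}'<0$ from \eqref{eq:Sdef}, we have $\mathcal{S}\ge 0$ and $\mathcal{S}(Z)\to0$ as $Z\to\infty$. Hence $\mathcal{S}$ is bounded on $[Z_0,\infty)$ for any $Z_0>0$. On this region $|\rho s_M|\le C\rho$, and $\rho \in L^\infty_tL^\gamma_x \subset L^\infty_tL^2_x$ because $\gamma>2$.

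\emph{Small $Z$.} For $Z<\underline Z$, \eqref{eq:S-form} gives $s_M = C-\ln Z$ and $p_M = \theta^{\gamma/(\gamma-1)}Z = \rho\theta$. Then
\[
|\rho s_M| \le C\rho + \rho|\ln \rho| + \tfrac{1}{\gamma-1}\rho|\ln\theta|.
\]
We handle the last two terms as follows.
\begin{itemize}
\item $\rho|\ln\rho| \le C(1+\rho^{\gamma/2})$.
\item When $\theta\ge1$, $\rho\ln\theta \le C\rho\,\theta^{\epsilon}$. Moreover $\rho = Z\theta^{1/(\gamma-1)} \le \underline Z\,\theta^{1/(\gamma-1)}$, so $\rho\theta^\epsilon \le C(\rho\theta)^{1/2}$ once $\epsilon$ is small. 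Here $\rho\theta = p_M\in L^1$, and alternatively $\theta\in L^\infty_tL^4_x$ is available, although that bound is not uniform in $a$.
\item When $\theta<1$, we have $\rho<\underline Z\theta^{1/(\gamma-1)}<\underline Z$. Using $\theta > (\rho/\underline Z)^{\gamma-1}$, we get $\rho|\ln\theta| \le C\rho(1+|\ln\rho|)$, which is bounded.
\end{itemize}

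\emph{Intermediate range $Z\in[\underline Z, Z_0]$.} Here $\mathcal{S}$ is bounded by continuity, so $|\rho s_M|\le C\rho$ again.

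The main obstacle is the case $\theta\ge1$ with $Z$ small. There one must control $\rho\ln\theta$ in $L^2$ using only the energy, uniformly in $a$. The relation $\rho\lesssim\theta^{1/(\gamma-1)}$ in this regime, combined with $p_M=\rho\theta\in L^1$, has to be checked carefully: we need $\rho^2\ln^2\theta \lesssim \rho\theta$, that is, $\rho\ln^2\theta\lesssim\theta$. This holds provided $1/(\gamma-1)<1$, which is exactly where $\gamma>2$ is used. The restriction $\gamma>2$ thus enters both here and in the embedding $L^\gamma\subset L^2$.
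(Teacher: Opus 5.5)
Your argument is correct. Its skeleton is the paper's: split at $Z=\underline Z$, use the third law and the monotonicity of $\mathcal S$ to get $|\rho s_M|\le C\rho$ on the large-$Z$ region (with $\rho\in L^\infty(0,T;L^2(\T^3))$ since $\gamma>2$), and on the small-$Z$ region use $s_M=C-\ln Z$ together with $p_M=\rho\theta\in L^\infty(0,T;L^1(\T^3))$.

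The difference lies in how you control the logarithm. The paper keeps $\ln Z$ intact and bounds $|\ln Z|\le CZ^{-\alpha}$. It then writes $\rho^2Z^{-2\alpha}=\rho^{2-\gamma\beta}(\rho\theta)^{\beta}$ with $\beta=2\alpha/(\gamma-1)$, and interpolates by H\"older between $\rho\theta\in L^1$ and $\rho\in L^\gamma$. You instead split $\ln Z=\ln\rho-\frac{1}{\gamma-1}\ln\theta$ and treat $\theta\ge 1$ and $\theta<1$ separately. The region constraint $\rho\le\underline Z\,\theta^{1/(\gamma-1)}$ then gives the pointwise domination $(\rho s_M)^2\le C(1+\rho^\gamma+p_M)\le C(1+\rho e_M)$. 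The last step uses $\rho e_M\ge\frac{P_\infty}{\gamma-1}\rho^\gamma$ and $p_M=(\gamma-1)\rho e_M$.

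Each route has an advantage. Yours needs no auxiliary exponent and yields a bound linear in the energy density. The paper's is shorter and slightly more robust. Its exponent condition $\frac{2-\gamma\beta}{1-\beta}\le\gamma$ is equivalent to $\gamma\ge 2$ for every admissible $\alpha$, so it survives at the endpoint $\gamma=2$. By contrast, your comparisons $\theta^{1/(\gamma-1)}\ln^2\theta\lesssim\theta$ for $\theta\ge1$ and $\rho|\ln\rho|\lesssim 1+\rho^{\gamma/2}$ genuinely require $\gamma>2$.
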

\begin{proof}
    Fixing a $t \in [0,T]$ we define $A_1 = \{x\mid \rho(t,x) > \underbar Z\theta(t,x)^{1/(\gamma - 1)} \}$ and $A_2 = \T^3 \setminus A_1$, where $\underline Z$ is as defined in \Cref{sec:con-rel}. 
    We recall that since $s_M = \mathcal{S}(\rho/\theta^{1/(\gamma - 1)})$ satisfies the third law of thermodynamics \eqref{eq:third-law}, there exists a $C>0$ such that 
    \begin{equation} \label{eq:third-law-usage}\sup_{x\in A_1} |s_M(\rho,\theta)| \leq \sup_{Z > \underline{Z}} \mathcal{S}(Z) < C.\end{equation}
    Integrating $(\rho s_M)^2$ and using \eqref{eq:third-law-usage} we find
    \begin{equation} \label{eq:entropy int first} \int_{\T^3}(\rho s_M)^2\,dx = \int_{A_1} (\rho s_M)^2 \,dx + \int_{A_2 } (\rho s_M)^2 \,dx \leq C^2 \int_{A_1} \rho^2 \,dx + \int_{A_2} (\rho s_M)^2 \,dx, \end{equation}
    where we recall that the first term is bounded by~\eqref{eq:rho-integability}. 
    Substituting~\eqref{eq:S-form} into the second term of~\eqref{eq:entropy int first} we have by Young's inequality,
    \begin{align}
        \int_{A_2} (\rho s_M)^2\,dx &\leq C\int_{A_2} \rho^2 \left| \log\left(\frac{\rho}{\theta^{1/(\gamma - 1)}  }\right) \right|^2 \,dx + C \int_{{A_2}} \rho^2\,dx.\nonumber\\ 
        \intertext{To handle the first term of the above expression we use the identity $|\ln(z)| \leq C z^{-\alpha}$ for all $z \leq \bar Z$ and H\"older inequality to obtain}
        \int_{A_2} \rho^2 \left| \log\left(\frac{\rho}{\theta^{\frac{1}{\gamma - 1}} }\right) \right|^2\,dx&\leq C  \int_{A_2}  \rho^{2(1-\alpha)} \theta^{2\frac{\alpha}{\gamma - 1}}\,dx \nonumber\\
        &=C\int_{A_2} \rho^{2\left(1-\frac{\gamma\alpha}{\gamma - 1}\right) } (\rho\theta)^{2\frac{\alpha}{\gamma - 1}} \,dx \nonumber\\
        &\leq C\left(\int_{A_2} \rho \theta \,dx \right)^{2\frac{\alpha}{\gamma - 1}} \left( \int_{A_2} \rho^{2\left(1-\frac{\gamma\alpha}{\gamma - 1}\right)/\left(1 - 2\frac{\alpha}{\gamma - 1}\right) }\,dx \right)^{ 1 - 2\frac{\alpha}{\gamma - 1}}. \label{eq:lem-ent-int-final}
    \end{align}
    Now, by the form of our pressure~\eqref{eq:pressure} we have
    $$p_M(\rho,\theta) = \rho\theta \quad\text{for all}\quad x\in A_2$$ 
    allowing us to bound the first integral by~\eqref{eq:bounds-from-energy}.
    For the second integral, we note that if $\alpha$, $\gamma$ satisfy the inequality 
    \begin{equation} \label{eq:lem-ent-int-gamma} 
        2\frac{1-\frac{\gamma\alpha}{\gamma - 1}}{1 - 2\frac{\alpha}{\gamma - 1}} \leq \gamma
        \end{equation}
    then we can use the bound~\eqref{eq:rho-integability}, as $\T^3$ has finite measure.
    We note that as $\alpha \to 0$ the left side of \eqref{eq:lem-ent-int-gamma} converges to $2$. 
    Therefore, since $\gamma > 2$ there exists $\alpha$ sufficiently small such that the inequality~\eqref{eq:lem-ent-int-gamma} is satisfied and \eqref{eq:lem-ent-int-final} is bounded by constants depending on $\gamma$ and the initial data. 
\end{proof}
Now that $\rho s$ is bounded, recalling~\eqref{eq:diss-bound-from-entropy} and our assumptions on the viscosity coefficients~\eqref{eq:visc-assum} we have the bound
\begin{equation} \label{eq:bounds-from-diss}
    \begin{aligned}
        C &\geq \nu \norm{\nabla u + \nabla^T u - \frac{2}{3} \div(u) I}_{L^2( 0,T; L^2(\T^3))}^2 + \kappa \norm{ \frac{\sqrt{K(\theta)}}{\theta} \nabla \theta}_{L^2( 0,T; L^2(\T^3))}^2,
    \end{aligned}
\end{equation}
where $C$ depends only on our constitutive relations and initial data.

To deduce a bound on $u$ from~\eqref{eq:bounds-from-diss} we recall the generalized Korn--Poincar\'e inequality.
\begin{lemma}[{\cite[Proposition 2.1]{feireislnovotny17}}] \label{lemma:gen-korn}
    Suppose that $r$ is a non-negative function such that there exists $C > 1$ for which
    $$ \frac{1}{C} \leq \int_{\T^3} r \,dx\quad \text{and}\quad \int_{\T^3} r^\gamma \,dx\leq C,$$
    where $\gamma >1$.
    Then for any $v \in W^{1,p}(\T^3)$ with $p > 1$ there exists a $K > 0$ giving the bound 
    $$ \norm{v}_{W^{1,p}(\T^3)} \leq K\left(\norm{\nabla v + \nabla^T v - \frac{2}{3} \textup{div}(v) I}_{L^p(\T^3; \R^{3\times 3})} + \int_{\T^3} r|v|\,dx\right) $$
    where the constant $K$ only depends on $p$, $\gamma$, and $C$.
\end{lemma}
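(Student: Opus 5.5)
The plan is to reduce the statement to the classical Korn inequality for the trace-free symmetric gradient on the torus, and then control the mean of $v$ using the weight $r$ through a Chebyshev-type argument. This avoids any compatibility condition between $p$ and $\gamma$ such as a Sobolev embedding into $L^{\gamma'}$. Write $D_0 v := \nabla v + \nabla^T v - \frac{2}{3}\textup{div}(v) I$, let $c := \frac{1}{|\T^3|}\int_{\T^3} v\,dx$, and set $w := v - c$.

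\textbf{Step 1 (Korn modulo constants).} I would first show that for $1<p<\infty$,
$$\norm{w}_{W^{1,p}(\T^3)} \leq K_0 \norm{D_0 v}_{L^p(\T^3)}, \qquad K_0 = K_0(p).$$
The standard route uses Fourier multipliers. The operator $D_0$ is elliptic in the sense that $\Delta v + \frac{1}{3}\nabla \textup{div}\, v = \textup{div}(D_0 v)$. The principal symbol $|\xi|^2 I + \frac13 \xi\otimes\xi$ of the left side is invertible for $\xi \neq 0$. Hence $\nabla w$ is a zeroth-order Calder\'on--Zygmund (Marcinkiewicz) multiplier applied to $D_0 v$, restricted to nonzero frequencies, which gives $\norm{\nabla w}_{L^p} \leq K_0\norm{D_0 v}_{L^p}$. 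The Poincar\'e inequality then controls $\norm{w}_{L^p}$. Equivalently, the kernel of $D_0$ consists of conformal Killing fields, and on $\T^3$ the only periodic ones are constants. I expect this step to be the most technical, but it is classical and I would quote it.

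\textbf{Step 2 (controlling the mean by the weight).} Fix a small $\delta>0$, to be chosen depending only on $p,\gamma,C$. There are two cases.

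\emph{Case (a): $\norm{w}_{L^p} \geq \delta |c|$.} Then $|c| \leq \delta^{-1}K_0\norm{D_0 v}_{L^p}$.

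\emph{Case (b): $\norm{w}_{L^p} < \delta|c|$.} Let $E := \{|w| > |c|/2\}$. Chebyshev's inequality gives $|E| \leq (2\norm{w}_{L^p}/|c|)^p \leq (2\delta)^p$. H\"older's inequality then gives
$$\int_E r\,dx \leq \norm{r}_{L^\gamma}|E|^{1-1/\gamma} \leq C^{1/\gamma}(2\delta)^{p(1-1/\gamma)},$$
which is at most $\frac{1}{2C}$ once $\delta$ is chosen small. On $\T^3\setminus E$ we have $|v| \geq |c| - |w| \geq |c|/2$. Therefore
$$\int_{\T^3} r|v|\,dx \geq \frac{|c|}{2}\left(\int_{\T^3} r\,dx - \int_E r\,dx\right) \geq \frac{|c|}{4C},$$
that is, $|c| \leq 4C\int r|v|\,dx$.

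\textbf{Step 3 (conclusion).} Combining the two cases yields
$$|c| \leq \delta^{-1}K_0\norm{D_0 v}_{L^p} + 4C\int_{\T^3} r|v|\,dx.$$
Then
$$\norm{v}_{W^{1,p}} \leq \norm{w}_{W^{1,p}} + |c|\,|\T^3|^{1/p} \leq K\left(\norm{D_0 v}_{L^p} + \int_{\T^3} r|v|\,dx\right),$$
with $K$ depending only on $p,\gamma,C$, which is the claimed inequality. The only genuine obstacle is Step 1, the $L^p$ Korn inequality for the trace-free part. Steps 2 and 3 are elementary and are exactly where the hypotheses $\int r \geq 1/C$ and $\int r^\gamma \leq C$ enter.
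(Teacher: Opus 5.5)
The paper gives no proof of this lemma; it is quoted from \cite[Proposition 2.1]{feireislnovotny17}, where it is stated on bounded Lipschitz domains. Your argument is correct. The identity $\div(D_0 v)=\Delta v+\frac13\nabla\div v$ and the invertibility of $|\xi|^2I+\frac13\xi\otimes\xi$ (its inverse is $|\xi|^{-2}\bigl(I-\frac{\xi\otimes\xi}{4|\xi|^2}\bigr)$) do give Korn modulo constants on $\T^3$ for $1<p<\infty$. In Step 2, choosing $\delta$ with $C^{1/\gamma}(2\delta)^{p(1-1/\gamma)}\le\frac{1}{2C}$ uses exactly $\gamma>1$ and yields $|c|\le 4C\int r|v|\,dx$; case (b) automatically has $c\neq 0$.

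The standard route to the cited result is different in spirit. On a general domain the kernel of $D_0$ is the finite-dimensional space of conformal Killing fields (polynomials of degree at most two), not just the constants. One therefore usually applies Chebyshev to $r$ rather than to $v$: H\"older together with $\int r\ge 1/C$ gives a lower bound on $|\{r\ge\epsilon\}|$. One then invokes a Poincar\'e-type inequality over sets of positive measure, proved by compactness and contradiction, using that a nonzero element of the kernel cannot vanish on a set of positive measure.

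Your version exploits the fact that on $\T^3$ only a single constant vector $c$ must be controlled. This makes the weight argument elementary and quantitative: it gives an explicit $K(p,\gamma,C)$, needs no compactness, and needs no compatibility between $p$ and $\gamma$. The price is that it does not transfer verbatim to bounded domains.

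One caveat worth making explicit: the restriction $1<p<\infty$ in your Step 1 is necessary, not a convenience, since Korn's inequality fails at $p=\infty$. The lemma's ``$p>1$'' should therefore be read as $1<p<\infty$.
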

Since $\rho$ satisfies conservation of mass, using conservation of energy~\eqref{eq:bounds-from-energy}, and the dissipation bound~\eqref{eq:bounds-from-diss} we can apply \Cref{lemma:gen-korn} with $v = u$, $r = \rho$, $p = 2$, and the same $\gamma$ as in our pressure to deduce that
\begin{equation} \label{eq:u-H1-bound}
    \norm{u}_{L^2(0,T; H^1(\T^3)) }^2 \leq C \left( \frac{1}{\nu} + 1\right)
\end{equation}
for some $C > 0$, independent of $a$, $\kappa$, and $\nu$.


\subsection{Proof of \Cref{thm:existence}}
In this section we establish \Cref{thm:existence} using the a priori bounds deduced in this section and the existence result for the Navier--Stokes--Fourier system \Cref{prop:nsf-exist}. 

We note that $\rho_0 \in L^\gamma(\T^3)$ by the bound~\eqref{eq:rho-gamma}.
This in addition to the other assumptions of \Cref{thm:existence} allows us to apply \Cref{prop:nsf-exist} for any $\kappa,a,\nu>0$ to obtain a weak solution $ (\rho^{\kappa,a,\nu},u^{\kappa,a,\nu},\theta^{\kappa,a,\nu}) $ in the sense of \Cref{def:solution}.
Next, noting that by H\"older's inequality
$$ \norm{\rho u}_{L^\infty(0,T;L^{2\gamma / (\gamma + 1)}} \leq \norm{\sqrt{\rho}}_{L^\infty(0,T;L^{2\gamma}(\T^3))} \norm{\sqrt{\rho}u}_{L^\infty(0,T; L^2(\T^3))} $$
and the estimates~\eqref{eq:bounds-from-energy} and~\eqref{eq:rho-integability} we have
$$ \norm{\rho}_{L^\infty(0,T; L^\gamma(\T^3))} + \norm{\sqrt{\rho}u}_{L^\infty(0,T; L^2(\T^3))}\leq C(1+ a\norm{\theta_0}_{L^4(\T^3)}^4) $$
where $C$ depends on the initial data and constitutive relations of \Cref{sec:con-rel}.
Furthermore, by the conservation of energy~\eqref{eq:E-con} we find 
$$ \norm{\mathcal{E}}_{L^\infty(0,T;L^1(\T^3))} \leq C (1 + a\norm{\theta_0}^4), $$
where $C$ is again independent of $\kappa$, $a$, and $\nu$.
Given these bounds, we can pass to the limit in $\kappa$ to find a subsequence along which 
\begin{align*}
    \rho^{\kappa,a,\nu} &\wsto \rho^{a,\nu} \quad \text{weakly$-\ast$ in}\quad L^\infty(0,T; L^\gamma(\T^3)), \\
    \rho^{\kappa,a,\nu}u^{\kappa,a,\nu} &\wsto m^{a,\nu} \quad \text{weakly$-\ast$ in}\quad L^\infty(0,T; L^{2\gamma/(\gamma + 1)}(\T^3)), \\
    \mathcal{E}^{\kappa,a,\nu} &\wsto \mathcal{E}^{a,\nu}\quad\text{weakly$-\ast$ in}\quad L^\infty(0,T; \mathcal{M}(\T^3)),
\end{align*}
by Banach Alaoglu.
The above bounds are maintained in this limit, so we may repeat this process to obtain $U = (\rho, m, \mathcal{E})$ which is a vanishing dissipation solution in the sense of \Cref{def:vanishing-diss-soln}.


\section{Proof of \Cref{prop:main}} \label{sec:proof}
In this section we prove \Cref{prop:main}. It is organized by first defining a notion of shift, mirroring the considerations made for conservation laws in 1-d. This is followed by computing the form of the relative entropy and its coercive quantities. Finally, we compute the weighted relative entropy inequality and take the vanishing dissipation limit to deduce the desired bound~\eqref{eq:main-est} on vanishing dissipation limits and the uniqueness result. 

\subsection{Constructing the shift} \label{sec:shift}
We fix an admissible solution $(\rho,u,\theta)$ in the sense of \Cref{def:solution} to the Navier--Stokes--Fourier system. 
Recall by the assumption~\eqref{eq:V-form}, our entropy wave takes the form $V(x) = (\bar \rho(x), 0,\bar \theta(x))$ in state variables with $\bar \rho_0$ and $\bar \theta_0$ both bounded above and below.
To construct our shifted profile relative to a Navier--Stokes--Fourier solution $(\rho, u, \theta)$ we mollify the velocity field. Let us then introduce $u_\delta$, which, for a given $\delta \in (0,1)$, is defined by 
$$u_\delta := u\ast \varphi_\delta,$$
where $\varphi \in C^\infty(\R^3)$, $\varphi \geq 0$, $\int \varphi\,dx = 1$, $\text{supp}(\varphi) \subset B_1(0)$ (where $B_1(0)$ is the open unit ball centered at $0$), and
\begin{equation} \label{eq:def-mol} 
    \varphi_\delta(x) = \begin{cases} 
        \frac{1}{\delta^3} \varphi\left(\frac{x}{\delta}\right)\quad & |x|\leq \delta\\
        0 & |x| \geq \delta
    \end{cases}
\end{equation}
over the cube $[-1,1]^3$ and periodically extended so that $\varphi_\delta \in C^\infty(\T^3)$.

We now let $\bar \theta_\delta$ solve the following transport equation with velocity $u_\delta$,
\begin{equation}\label{eq:delta-shift-pde} \begin{cases}
    \begin{aligned}
        \partial_t \bar \theta_\delta + u_\delta\cdot \nabla \bar \theta_\delta &= 0, \\
        \bar\theta_\delta(0,\cdot) &= \bar\theta_0,
    \end{aligned}
\end{cases} 
\end{equation}
where the initial data $\bar \theta_0$ corresponds to the temperature of the entropy wave $V$~\eqref{eq:V-form}.
We note that since $u_\delta$ is smooth, the above problem admits a unique weak solution by the method of characteristics. 
To ensure that these solutions are sufficiently regular, we add the additional mollification for $\epsilon \in (0,1)$
$$\bar \theta_{\delta,\epsilon} := \bar \theta_\delta \ast \varphi_\epsilon,$$
which we will take to be our shifted temperature.
To define our shifted density $\rho_{\delta,\epsilon}$ we recall that along the entropy wave $$p_M(\bar \rho, \bar \theta) \equiv \bar p$$ for some positive constant $\bar p$. 
Using the hypothesis of thermodynamic stability~\eqref{eq:thermo-stab} we know that $\partial_\rho p_M(\rho,\theta) > 0$ and additionally for any fixed $\theta > 0$ the range of $p_M(\cdot, \theta)$ is all of $[0,\infty)$.
This allows us to deduce a unique shifted density $\bar \rho_{\delta,\epsilon}$ such that $\bar \rho_{\delta,\epsilon}$ and $\bar \theta_{\delta,\epsilon}$ satisfy
$$ p_M(\bar \rho_{\delta,\epsilon} , \bar \theta_{\delta,\epsilon}) = \bar p.$$
Ensuring that this identity is maintained at the level of the smoothed density and temperature will be helpful in the future analysis. 
 
We now denote our shifted entropy wave in conservative variables by
$$ V_{\delta,\epsilon}(t,x) = (\bar \rho_{\delta,\epsilon}(t,x), 0, \bar \rho_{\delta,\epsilon}(t,x) e_M(\bar \rho_{\delta,\epsilon} , \bar \theta_{\delta,\epsilon})),$$
where $\bar \rho_\delta$ is defined analogously to $\bar \rho_{\delta,\epsilon}$.
We will also need a ``lift'' of this wave to account for the dependence of our pressure $p$ of the Navier--Stokes--Fourier system on the parameter $a > 0$.
We define
$$ V_{\delta,\epsilon}^a(t,x) = V_{\delta,\epsilon}(t,x) + (0,0, a \bar \rho_{\delta,\epsilon}(t,x) e_R(\bar\rho_{\delta,\epsilon}(t,x),\bar \theta_{\delta,\epsilon})).$$

Finally, we note that since $\bar \rho_0, \bar \theta_0 \in L^\infty(\T^3)$ our shifted primitive variables have the uniform bounds 
\beq \label{eq:psi-bounded} C^{-1} \leq \bar \theta_{\delta},\bar \theta_{\delta,\epsilon}, \bar \rho_{\delta},\bar \rho_{\delta,\epsilon}\leq C \eeq
for some $C > 0$.
Additionally, due to smoothness of $p_M$ and~\eqref{eq:psi-bounded} we similarly have 
\begin{equation} \label{eq:V-de-Linf-bound} |V_{\delta,\epsilon}| \leq C,\quad |V^a_{\delta,\epsilon}| \leq C(1+a) \end{equation}
for some $C > 0$, where in this case $|\cdot|$ is Euclidean distance in $\R^5$ and $V_{\delta,\epsilon},\ V^a_{\delta,\epsilon}$ are written in the conservative coordinates. 


\subsection{Weighted relative entropy inequality}
In this section we establish the form of the relative entropy. 
\begin{lemma} \label{lem:rel-entropy-eval}
    Consider two constant states $U = (\rho, m, \mathcal{E}),\ V = (\bar \rho, 0,\bar {\mathcal{E}})$ {where the internal energy of both are determined via the relation $e = e_M + ae_R$} (without confusion, we will continue to disambiguate the other thermodynamic variables by the use of the bar for those corresponding to the state $V$ and no bar for those corresponding to $U$). 
    The relative entropy has the form
    $$\bar \theta (-\rho s)(U|V) = \mathcal{E}+ \bar p -\bar \theta \rho s - \rho \bar c ,$$
    where $\bar c = \bar e - \bar \theta \bar s + p(\bar \rho,\bar \theta) /\bar \rho$. 
\end{lemma}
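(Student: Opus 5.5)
The plan is a direct computation of the derivative $\eta'(V)$ of $\eta(U) = -\rho s$ in the conservative variables $(\rho, m, \mathcal{E})$, followed by substitution into the definition~\eqref{eq:def_relative_entropy}. Write $\rho s$ as a function of $(\rho, m, \mathcal{E})$ through the internal energy per unit volume $\rho e = \mathcal{E} - \frac{|m|^2}{2\rho}$. From the Gibbs relation~\eqref{eq:gibbs-relation}, in the form $\theta\, d(\rho s) = d(\rho e) - (e - \theta s + p/\rho)\,d\rho$ (the standard volumetric version, obtained by expanding $d(\rho s) = s\,d\rho + \rho\, ds$), we read off
$$ \partial_{\mathcal{E}}(\rho s) = \frac{1}{\theta},\qquad \partial_m(\rho s) = -\frac{u}{\theta},\qquad \partial_\rho(\rho s) = -\frac{1}{\theta}\Big(e - \theta s + \frac{p}{\rho} - \frac{|u|^2}{2}\Big). $$
The total energy here uses $e = e_M + a e_R$ and $p = p_M + a p_R$, and the Gibbs relation holds for this combined pair as stated in \Cref{sec:con-rel}.

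Next I would evaluate these derivatives at $V = (\bar\rho, 0, \bar{\mathcal{E}})$. Since $\bar u = 0$, they become
$$ \eta'(V) = \Big(\frac{\bar c}{\bar\theta},\, 0,\, -\frac{1}{\bar\theta}\Big), \qquad \bar c = \bar e - \bar\theta \bar s + \frac{p(\bar\rho,\bar\theta)}{\bar\rho}. $$

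Substituting into~\eqref{eq:def_relative_entropy} and multiplying by $\bar\theta$ gives
$$ \bar\theta(-\rho s)(U|V) = -\bar\theta\rho s + \bar\theta\bar\rho\bar s - \bar c(\rho - \bar\rho) + (\mathcal{E} - \bar{\mathcal{E}}). $$
Because $\bar m = 0$, the momentum component contributes nothing. The constant terms collect as
$$ \bar\theta\bar\rho\bar s + \bar c\bar\rho - \bar{\mathcal{E}} = \bar\theta\bar\rho\bar s + \bar\rho\bar e - \bar\theta\bar\rho\bar s + \bar p - \bar\rho\bar e = \bar p, $$
using $\bar{\mathcal{E}} = \bar\rho\bar e$, which holds since the momentum of $V$ vanishes. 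This yields the claimed identity.

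The only delicate point is the derivative computation in conservative variables. One must track that $\theta$ depends on $(\rho, m, \mathcal{E})$ only through $(\rho, \rho e)$, and handle the kinetic term $|m|^2/(2\rho)$ in the $\rho$-derivative correctly. Since that term vanishes at $\bar u = 0$, the evaluation at $V$ is straightforward.
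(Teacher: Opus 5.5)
Your proposal is correct and follows the same route as the paper. Both use the Gibbs relation in conservative variables to get $\bar\theta\, d(-\rho s)(V) = \bar c\, d\rho - d\mathcal{E}$, then substitute into the definition of the relative entropy and collect the constant terms into $\bar p$; the paper writes the $d\rho$ coefficient as $-\bar\theta\bar s + (p(\bar\rho,\bar\theta) + \bar{\mathcal{E}})/\bar\rho$, which equals $\bar c$ since $\bar{\mathcal{E}} = \bar\rho\bar e$, and your version simply spells out the partial derivatives, including the kinetic term that vanishes at $\bar u = 0$, in more detail.
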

\begin{proof}
    Using \eqref{eq:def_relative_entropy} with $\eta(U) = (-\rho s)(U),$ we have
    \begin{equation} \label{eq:rel ent def} (-\rho s)(U|V) = (-\rho s)(U) - (-\rho s)(V) -d(-\rho s)(V)(U-V). \end{equation}
    Using the Gibbs relation~\eqref{eq:gibbs-relation} and that 
    $$e=\frac{{\mathcal{E}}}{\rho} - \frac{|u|^2}{2},$$
    we obtain
    \begin{equation} \label{eq:gibbs-rho}
        \bar \theta\, d(-\rho s)(V) = \left(\bar \theta(-s)(V) + \frac{p(\bar \rho,\bar \theta) + \bar{\mathcal{E}}}{\bar \rho}\right)\, d\rho - d\mathcal{E}.
    \end{equation}
    Using~\eqref{eq:rel ent def} we find the relation  
    \begin{align*}
        \bar \theta(-\rho s)(U|V) &= \bar \theta(-\rho s)(U) - \bar \theta (-\rho s)(V) - \bar \theta\, d(-\rho s)(V)(U-V) \\
        &= -\bar \theta \rho s + \bar \theta\bar \rho \bar s - \left(-\bar \theta \bar s + \frac{p(\bar \rho,\bar \theta) + \bar{\mathcal{E}}}{\bar \rho}\right)(\rho-\bar \rho) + (\mathcal{E} - \bar{\mathcal{E}}) \\
        &= -\bar\theta \rho s - \left( -\bar \theta \bar s+ \frac{p(\bar \rho,\bar \theta)  }{\bar \rho}+\bar e \right) \rho +p(\bar \rho,\bar \theta) + \mathcal{E}.
    \end{align*}
    This completes the proof.

\end{proof}
In the future, we will compare to the shifted wave $V^a_{\delta,\epsilon}$ rather than to a constant state $V$. 
In this case, we denote by $\bar \theta_{\delta,\epsilon}(t,x)$ the temperature of the state $V^a_{\delta,\epsilon}(t,x)$, $\bar c_{\delta,\epsilon}(t,x) = \bar e_{\delta,\epsilon} - \bar \theta_{\delta,\epsilon} \bar s_{\delta,\epsilon} + p(\bar \rho_{\delta,\epsilon},\bar \theta_{\delta,\epsilon}) /\bar \rho_{\delta,\epsilon}$, and $\bar p_{\delta,\epsilon} = p(\rho_{\delta,\epsilon},\theta_{\delta,\epsilon})$, where $\bar e_{\delta,\epsilon}, \bar \theta_{\delta,\epsilon}, \bar s_{\delta,\epsilon}, \bar \rho_{\delta,\epsilon}$ are again values of the thermodynamic quantities at the state $V^a_{\delta,\epsilon}(t,x)$.

We now show that several quantities are coercive to the relative entropy. 
This bound follows from the hypothesis of thermodynamic stability~\eqref{eq:thermo-stab}, which grants that both $(-\rho s)$ is a convex function of the conservative variables $(\rho, m, \mathcal{E})$ \cite{maria-convex}.
\begin{lemma} \label{lemma:coercive}   
    There exists a $C> 0$ such that the following holds. For any state in phase space $U = (\rho, m, \mathcal{E})$ and $a \in [0,1]$ the relative entropy controls the following quantities,
    \begin{equation}\label{eq:rel-co} \mathcal{G}(m) + \mathcal{F}(\rho) + \mathcal{G}( \mathcal{E} - \bar {\mathcal{E}}^a ) \leq C\bar \theta(V_{\delta,\epsilon}(t,x)) (-\rho s)(U|V^a_{\delta,\epsilon}(t,x)) \quad\text{for all}\quad t\geq 0,\,x \in \T^3,\end{equation}
    where $\bar{\mathcal{E}}^a = \bar \rho e_M(\bar \rho,\bar \theta) + a\bar \rho e_R(\bar \rho,\bar \theta)$ is the total energy of the shifted state $V_{\delta,\epsilon}^a$.
    In particular, the constant $C$ is \emph{uniform} in the parameters $\kappa,\, a,$ and $\nu$. 
\end{lemma}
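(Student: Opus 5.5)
We use the explicit form from \Cref{lem:rel-entropy-eval}: with $\bar\theta=\bar\theta_{\delta,\epsilon}(t,x)$ and $V^a=V^a_{\delta,\epsilon}(t,x)$,
$$\bar\theta(-\rho s)(U|V^a)=\mathcal{E}+\bar p^a-\bar\theta\rho s-\rho\bar c^a .$$
For fixed $(\rho,m)$ this is convex in $\mathcal{E}$. Since $\mathcal{E}=\frac{|m|^2}{2\rho}+\rho e$, the Gibbs relation~\eqref{eq:gibbs-relation} lets us rewrite it as
$$\frac{|m|^2}{2\rho}+\rho\big(e-\bar e-\bar\theta(s-\bar s)\big)+\bar p^a-\rho\bar c^a+\rho\bar e-\rho\bar\theta\bar s ,$$
that is, a kinetic part $\frac{|m|^2}{2\rho}$ plus the Helmholtz-type relative potential
$$\Phi(\rho,\theta):=\rho\psi_{\bar\theta}(\rho,\theta)-\rho\bar\psi-\partial_\rho(\rho\psi)(\bar\rho,\bar\theta)(\rho-\bar\rho),$$
with the remainder $\rho\big(e-\bar\theta s\big)-\rho(\bar e-\bar\theta\bar s)\ge 0$. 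The key point is that all $\bar{\ }$ quantities range over the compact set $K=\{(\bar\rho,\bar\theta)\in[C^{-1},C]^2\}$ given by~\eqref{eq:psi-bounded}, and $a\in[0,1]$. So we seek bounds uniform over a compact family of reference states. We treat the pieces separately.

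\textbf{Local quadratic bound.} By thermodynamic stability~\eqref{eq:thermo-stab}, $(-\rho s)$ is strictly convex in $(\rho,m,\mathcal{E})$ \cite{maria-convex}, and its Hessian depends continuously on $(\bar\rho,\bar\theta,a)\in K\times[0,1]$. Compactness then gives
$$\bar\theta(-\rho s)(U|V^a)\ge c|U-V^a|^2 \quad\text{whenever } |U-V^a|\le r,$$
with $c,r$ uniform. For $a=0$ this holds because only $e_M$ enters. For $a>0$ the radiative part $a\theta^4$ only adds convexity, so we should check that the lower Hessian bound is not degraded; the molecular part alone already provides it.

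\textbf{Far-field linear growth.} Any convex function $h\ge0$ with $h(V)=0$ and $h\ge c r^2$ on the sphere $|U-V|=r$ satisfies $h(U)\ge c r|U-V|$ outside the ball, by convexity along rays. Combining this with the local bound yields
$$\bar\theta(-\rho s)(U|V^a)\ge c'\,\mathcal{G}(U-V^a)\quad\text{on the whole domain } \{\rho\ge0\},$$
uniformly. We then read off the three coercive quantities from this single bound:
\begin{itemize}
\item since $\bar m=0$, the momentum gives $\mathcal{G}(m)$;
\item the energy component gives $\mathcal{G}(\mathcal{E}-\bar{\mathcal{E}}^a)$;
\item the density gives $\mathcal{G}(\rho-\bar\rho)\ge\mathcal{F}(\rho)$, because $\bar\rho\in[\rho_*,\rho^*]$ and $\mathcal{G}$ is monotone in $|x|$, so $\mathcal{G}(\rho-\bar\rho)\ge\min(\mathcal{G}(\rho-\rho_*),\mathcal{G}(\rho-\rho^*))$ outside $[\rho_*,\rho^*]$.
\end{itemize}
Note that $\bar\rho_{\delta,\epsilon}$ lies in $[\rho_*,\rho^*]$: $p_M(\bar\rho,\bar\theta)=\bar p$ with $\bar\theta_{\delta,\epsilon}$ taking values in the convex hull of the range of $\bar\theta_0$, and $p_M$ is monotone along the isobar. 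The weight $\bar\theta(V_{\delta,\epsilon})$ is bounded above and below, so it is harmless.

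\textbf{Main obstacle.} The main difficulty is the domain and the boundary behaviour. Phase space includes $\rho\to0$ and $\theta\to0,\infty$, where $-\rho s$ is convex but only lower semicontinuous (the $\rho\log$-type term in~\eqref{eq:S-form}). We must therefore check two things:
\begin{itemize}
\item the ray-convexity argument applies on the convex set $\{\rho\ge0,\ \mathcal{E}\ge\frac{|m|^2}{2\rho}\}$ with values possibly $+\infty$;
\item the local Hessian lower bound holds on a uniform neighbourhood of $K$, which we get by shrinking $r$ below $\frac12 C^{-1}$ so the ball stays in $\{\rho,\theta\ge c\}$.
\end{itemize}
We would also verify the uniformity in $a$ carefully, since $V^a$ moves with $a$ and $e_R=\theta^4/\rho$ alters the Hessian. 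If uniform strict convexity near $V^a$ fails in the $a$-direction, we would first bound the relative entropy with $a=0$ from below and show that the radiative relative entropy $a(\cdot|\cdot)$ is nonnegative, being itself a relative entropy of a convex function.
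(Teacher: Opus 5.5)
Your proposal is correct and follows essentially the paper's route: uniform positive-definiteness of $\nabla^2(-\rho s)$ on a compact neighbourhood of the reference states, uniformly in $a\in[0,1]$, gives the local quadratic bound, and convexity along rays gives linear growth outside the ball. Your observation that $g(t)/t$ is nondecreasing for convex $g$ with $g(0)=0$ is a slightly leaner substitute for the paper's lower bound on $f'(r/2)$, and your monotonicity-along-the-isobar argument for $\bar\rho_{\delta,\epsilon}\in[\rho_*,\rho^*]$ makes explicit what the paper builds into the set $W$. The only flaw is the unused Helmholtz-type decomposition, whose claimed nonnegative remainder $\rho(e-\bar\theta s)-\rho(\bar e-\bar\theta\bar s)$ is false in general: it is negative at $\theta=\bar\theta$, $0<\rho<\bar\rho$, since $\partial_\rho(e-\theta s)=p/\rho^2>0$, so you should simply drop it.
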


To clarify notation, in the following proof we denote the partial derivative $\partial_x f(x,y) = \frac{\partial f}{\partial x}(x,y)$ as the derivative obtained by varying $x$ while keeping the value of $y$ fixed. 
In particular, this distinguishes $\partial_v s(v,e)$ from $\partial_v s(v,\theta)$.
\begin{proof}
    Following the steps of \cite[Section 5]{maria-convex}, we will begin by showing that $-s = -(s_M + as_R)$ has strictly positive definite hessian in the variables $(v, e)$, where $v = \frac{1}{\rho}$ is the specific volume and $0 < v$ and $ \frac{P_\infty}{\gamma -1} v^{1-\gamma}< e$, the natural bounds away from vacuum and $\theta = 0$ (see~\eqref{eq:rho-gamma}).
    From this, the strict convexity of $-\rho s$ in the conservative variables $(\rho, m , \mathcal{E})$ follows via a change of coordinates (see \cite[Section 6]{maria-convex} for further details).

    To begin, we note that by Sylvester's criterion 
    \begin{equation} \label{eq:entropy-sylvester} \partial_e^2 s(v,e) < 0 \quad \text{and}\quad (\partial_v^2 s(v,e)) (\partial_e^2 s(v,e)) - \left( \partial_v \partial_e  s(v,e)\right)^2 > 0 \end{equation}
    is equivalent to the negative definiteness of the hessian of $s$. 
    By the Gibbs relation~\eqref{eq:gibbs-relation} we know that $\frac{1}{\theta(v,e)} = \frac{\partial s}{\partial e}(v,e)$ and by differentiating in $e$ and using~\eqref{eq:thermo-stab} we find 
    \begin{equation} \label{eq:entropy-coer-first-term-neg} \frac{\partial^2 s}{\partial e^2}(v,e) = -\frac{1}{\theta^2} \frac{\partial \theta}{\partial e}(v,e) < 0. \end{equation}
    Next, we write $\theta(v,\theta) = \theta(v, e(v,\theta))$ and differentiating in $v$ we find $0 = \partial_v \theta(v,e) + \partial_e \theta(v,e) \partial_v e(v,\theta)$. 
    By the relation $\theta(v,e)^{-1} = \frac{\partial s}{\partial e}(v,e)$ this is equivalent to
    $$ \frac{\partial e}{\partial v}(v,\theta) = -\frac{\partial_v \theta(v,e)}{\partial_e \theta(v,e)} = - \frac{\partial_e \partial_v s(v,e)}{\partial_e^2 s(v,e)}. $$
    Using again~\eqref{eq:thermo-stab} and $p = \theta \dfrac{ds}{dv}$ we compute 
    \begin{align*}
        0 > \frac{\partial p}{\partial v}(v,\theta) &=  \partial_v\left(\theta \frac{\partial s}{\partial v}(v,e(v,\theta))\right) \\
        &= \theta \left[\frac{\partial^2s}{\partial v^2}(v,e) + \frac{\partial^2s}{\partial v\partial e}(v,e)\frac{\partial e}{\partial v}(v,\theta) \right] \\
        &= \theta \left[ \frac{\partial^2s}{\partial v^2}(v,e) - \left(\frac{\partial^2s}{\partial v\partial e}(v,e)\right)^2 \left(\frac{\partial^2 s}{\partial e}(v,e)\right)^{-1} \right].
    \end{align*}
    Using the negativity~\eqref{eq:entropy-coer-first-term-neg} and $\theta > 0$ we see the above bound establishes the determinant condition of Sylvester's criterion~\eqref{eq:entropy-sylvester}, and hence also the negative definiteness of the hessian of $s$. 


    Now that we have established the strict convexity of the entropy, we prove the coercivity bound~\eqref{eq:rel-co}.
    Define the set $$W = \left[\rho_\ast, \rho^\ast\right] \times \{0\}\times  \left[\inf_{x\in\T^3} \bar \rho e_M(\bar \rho,\bar \theta), \sup_{x\in \T^3} \bar \rho e_M(\bar \rho,\bar \theta) + \sup_{x\in \T^3} \bar \theta^4\right] ,$$ 
    where $\rho_*,\rho^*$ are defined in~\eqref{eq:density-bounds}.
    This is a bounded set away from vacuum containing all states attained by $V^a_{\delta,\epsilon}$ for $a \in [0,1]$. 
    Note that there exists $r > 0$ such that 
    \begin{equation} \label{eq:math-cal-G-def} 
        K = \{U: d(U,\overline{W}) \leq r\}\subset\subset \mathcal{P}:=\left\{(\rho, m, \mathcal{E})\ \middle|\ \rho > 0\text{ and } \mathcal{E} - \frac{|m|^2}{2\rho} > \frac{P_\infty}{\gamma - 1} \rho^\gamma\right\},
    \end{equation}
    where $P_\infty$ is as in \Cref{eq:rho-gamma} and $\mathcal{P}$ is the conservative phase space corresponding to the constitutive relations of \Cref{sec:con-rel}.
    Considering the compact set $K\times [0,1]$ and recalling that $s = s_M + as_R$, the map $$(V,a) \mapsto \min \{\lambda \in \R: \lambda \text{ is an eigenvalue of } \nabla^2(-\rho s)(V)\}$$ is continuous on the compact set $ K\times [0,1] $. 
    Since $\nabla^2(-\rho s)$ is positive definite everywhere on $K$ this map has a strictly positive minimum on the set. 
    So, there exists $c > 0$ such that 
    $$ \nabla^2 (-\rho s)(V) \xi\cdot \xi \geq c\xi\cdot \xi\quad \forall \xi \in \R^5,\ V \in K,\ a\in[0,1], $$
    establishing a uniform convexity in a neighborhood of $\bar W$.

    Now, following the steps of \cite[Lemma 2.2]{kvy21}, we fix $a \geq 0$ and use this uniform convexity to establish the inequality~\eqref{eq:rel-co}.
    Fixing $V \in W$ and a direction $\xi \in \R^5 $ with $|\xi| = 1$ we use the relative entropy~\eqref{eq:def_relative_entropy} with $\eta = -\rho s$ to define the convex function $f(t) = \eta( V+t\xi | V)$ for $t \in [0,T)$, where $T$ is the largest value for which when $t < T$ we have $V+t\xi \in \mathcal{P}$.
    We note that $\mathcal{P}$ is a convex open set, so any state $U \in \mathcal{P}$ can be reached by such a ray. 
    Computing the derivatives, we find $f'(t) = (\nabla \eta(V+t\xi) - \nabla \eta(V))\cdot \xi$ which is zero at $t = 0$. 
    Furthermore, $f''(t) = \nabla^2 \eta(V+t\xi) \xi\cdot \xi$ and for all $t \leq r$ we have $V+t\xi \in K$, hence 
    $$ f''(t) \geq c \quad \forall t \in [0,r].$$
    Integrating this, we find 
    $$ f(t) = \int_0^t \int_0^s \nabla^2 \eta(V+q\xi) \xi\cdot \xi \,dq\,ds \geq \frac{1}{2}ct^2 \quad \forall t \in [0,r].$$
    Thanks to $\nabla^2 \eta $ being positive definite over $\mathcal{P}$, we have $f'' > 0$ for $t \in [0,T)$.
    This indicates that $f'$ is an increasing function for all $V$ and $\xi$.
    In particular, since $f'(0)=0$ we must have $f'(r/2) > 0$ and, furthermore, since $K\times [0,1]$ is compact there exists $c > 0$ such that 
    $$ f'(r/2) > c \quad \forall V \in W,\ |\xi| = 1,\ a \in[0,1].$$ 
    We then observe
    $$ f(t) = \int_0^t f'(s)\,ds = \int_{r/2}^t f'(s)\,ds + f(r/2) \geq c(t-r/2) \geq \frac{c}{2}t\quad \forall t \in [r,T).$$
    Next, we define the set 
    $$ \bar{\mathcal{P}} = \{ U \in \R^5\mid \exists \{U_n\} \subset \mathcal{P} \text{ with } U_n \to u\text{ and } \limsup_{n\to \infty} \eta(U_n) < \infty\}$$
    which includes the vacuum states. 
    On this set we extend $\eta$ such that it lower semicontinuous; 
    $$ \eta(U) = \liminf_{V \to U} \eta(V), $$
    where the $V$ in the liminf are taken to reside within $\mathcal{P}$. 
    It is known that this extension is convex (see Lemma 2 of \cite{VASSEUR2008323}).
    We have thus shown that for some $ C > 0 $ we have the bound
    $$ \min\left(|U-V|,|U-V|^2\right) \leq C(-\rho s)(U|V)\quad \forall V \in W,\ U \in \bar{\mathcal{P}},\ a \in [0,1].$$
    Next, noting that for all $x,y,z \in \R$ we have the simple inequality 
    $$\min\left( |x|^2, |x|\right) + \min\left(|y|^2, |y|\right) + \min\left(|z|^2, |z|\right) \leq 2 \min\left( |x|^2 + |y|^2 + |z|^2, \sqrt{|x|^2 + |y|^2+|z|^2}\right),$$
    that $\bar \theta=\bar \theta(V)$ is bounded above and below for all $V \in W$, and the definitions~\eqref{eq:G-coercive},~\eqref{eq:F-coercive} we arrive at 
    $$   C \left( \mathcal{F}(\rho) + \mathcal{G}(m) + \mathcal{G}(\mathcal{E} - \bar{\mathcal{E}}^a)\right) \leq \bar \theta (-\rho s)(U| V).$$
    The constant $C> 0$ is independent of the value of $V \in W$, so this completes the proof. 
\end{proof}

Now that we have established the form of the relative entropy, we use the shifted primitives $\bar \rho_{\delta,\epsilon},\bar\theta_{\delta,\epsilon}$ and compute the change in the relative entropy in time. 
To begin, we define 
\begin{equation}
    \label{eq:F-def}
    F_{\delta,\epsilon}(t,x) = \bar\theta_{\delta,\epsilon} (-\rho s) (U(x,t) |  V^a_{\delta,\epsilon}) 
\end{equation}
to be our relative entropy.
\begin{lemma} \label{lem:rel-entropy-shifts}
    Let $U = (\rho, m, \mathcal{E})$ be an admissible solution of the Navier--Stokes--Fourier system~\eqref{eq:nsf-system} with parameters $\kappa,\nu > 0$ and $a\in (0,1)$ written in conservative variables. Let $ V^a_{\delta,\epsilon}$ be the shifted profile of the entropy wave $V$ constructed in \Cref{sec:shift}.  
    Then, the weighted relative entropy satisfies
    \begin{equation} \label{eq:rel-entropy-inequality}
        \begin{aligned}
            \int_{\T^3} F_{\delta,\epsilon}(t,x) \,dx &+ \int_0^t \int_{\T^3}  \bar\theta_{\delta,\epsilon}\left( -\kappa \frac{\bf{q}(\theta,\nabla \theta)\cdot \nabla \theta}{\theta^2} + \nu \frac{\S(\theta,\nabla u):\nabla u}{\theta} \right)\,dx\,d\tau \\
            \leq&\,\kappa \int_0^t \int_{\T^3} K (\theta)\frac{\nabla \theta}{\theta}\cdot \nabla \bar \theta_{\delta,\epsilon}\,dx\,d\tau  \\
            &- \int_0^t \int_{\T^3} \rho s \left(\partial_t \bar \theta_{\delta,\epsilon} + u\cdot \nabla \bar \theta_{\delta,\epsilon}\right)\,dx\,d\tau\\
            &-\int_0^t \int_{\T^3} \rho \left(\partial_t \bar c_{\delta,\epsilon} + u\cdot \nabla \bar c_{\delta,\epsilon}\right)\,dx\,d\tau\\
            &+ \int_{\T^3} F_{\delta,\epsilon}(0,x)\,dx + Ca,
        \end{aligned}
    \end{equation}
    where $\bar \theta_{\delta,\epsilon}(t,x)$ is the temperature at state $V_{\delta,\epsilon}(t,x)$ and $\bar c_{\delta,\epsilon}(t,x) = \bar e_{\delta,\epsilon} - \bar \theta_{\delta,\epsilon} \bar s_{\delta,\epsilon} + \bar p_{\delta,\epsilon}/\bar \rho_{\delta,\epsilon}$, with $\bar p_{\delta,\epsilon} = \bar p_M + \frac{a}{3} \bar\theta^4_{\delta,\epsilon}$. 
    Again, all thermodynamic quantities correspond to the state $ V^a_{\delta,\epsilon}(t,x)$.
\end{lemma}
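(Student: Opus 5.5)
By \Cref{lem:rel-entropy-eval}, applied pointwise with $V$ replaced by $V^a_{\delta,\epsilon}(t,x)$, we have
$$F_{\delta,\epsilon} = \mathcal{E} + \bar p_{\delta,\epsilon} - \bar\theta_{\delta,\epsilon}\,\rho s - \rho\,\bar c_{\delta,\epsilon}.$$
The plan is to compute the time increment of $\int_{\T^3} F_{\delta,\epsilon}\,dx$ one term at a time from this identity.

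\emph{Energy term.} $\int \mathcal{E}\,dx$ is constant in time by the energy conservation~\eqref{eq:E-con}.

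\emph{Pressure term.} By construction $p_M(\bar\rho_{\delta,\epsilon},\bar\theta_{\delta,\epsilon}) = \bar p$ is constant. Hence $\bar p_{\delta,\epsilon} = \bar p + \frac a3 \bar\theta_{\delta,\epsilon}^4$, and by~\eqref{eq:psi-bounded} its integral changes by at most $Ca$. This is one source of the $Ca$ remainder.

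\emph{Entropy term.} Since $\bar\theta_{\delta,\epsilon}$ is smooth and positive (it is mollified in space, and $\bar\theta_\delta$ is Lipschitz in time because $u_\delta$ is smooth), it is an admissible test function $\phi = \bar\theta_{\delta,\epsilon}\ge 0$ in the entropy inequality~\eqref{eq:nsf-entropy-inequality}. The inequality gives
$$-\int \rho s\,\bar\theta_{\delta,\epsilon}\Big|_0^t + \iint \bar\theta_{\delta,\epsilon}\Big(\nu\frac{\S:\nabla u}{\theta} - \kappa\frac{\mathbf q\cdot\nabla\theta}{\theta^2}\Big) \le -\iint \rho s(\partial_t+u\cdot\nabla)\bar\theta_{\delta,\epsilon} - \kappa\iint\frac{\mathbf q\cdot\nabla\bar\theta_{\delta,\epsilon}}{\theta}.$$
With $\mathbf q = -K(\theta)\nabla\theta$ from~\eqref{eq:heat-flux}, the last term becomes $\kappa\iint K(\theta)\frac{\nabla\theta}{\theta}\cdot\nabla\bar\theta_{\delta,\epsilon}$. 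This produces the dissipation on the left side and the first two terms on the right side of~\eqref{eq:rel-entropy-inequality}.

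\emph{Density term.} Test the continuity equation~\eqref{eq:nsf-cont} with $\phi = \bar c_{\delta,\epsilon}$, which is $C^1$ because $\bar\rho_{\delta,\epsilon}$ and $\bar\theta_{\delta,\epsilon}$ are smooth, bounded and bounded away from zero. This yields
$$-\int \rho\,\bar c_{\delta,\epsilon}\Big|_0^t = -\iint\rho(\partial_t+u\cdot\nabla)\bar c_{\delta,\epsilon},$$
which is the third term on the right side.

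Summing the four contributions gives~\eqref{eq:rel-entropy-inequality}. Both the pointwise identity and the entropy inequality use the full entropy $s = s_M + a s_R$ and the full $\bar c_{\delta,\epsilon}$, which contains $\bar p_{\delta,\epsilon}$, so the radiative parts appear consistently. The only explicit $a$-dependence left over is the pressure term, bounded by $Ca$.

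\emph{Main obstacle.} The difficulty is justifying the test functions and the time-slice identities. The entropy inequality holds only for a.e.\ $\tau$, so the final inequality is asserted for a.e.\ $t$; this is acceptable since the goal is an $\esssup$ bound. The test function $\bar\theta_{\delta,\epsilon}$ must be $C^1$ in time and space. Here $u_\delta$ is smooth in $x$ but only $L^2$ in time, so $\bar\theta_\delta$ is merely Lipschitz in $t$. I would first try to close the argument by approximation: mollify in time as well, or use that the weak formulations extend to $W^{1,\infty}$ test functions by density. In the limit one needs $\rho s$, $\rho$ in $L^\infty L^1$ and $u\rho s$, $u\rho$ integrable, which follow from \Cref{lemma:rho-s-integrability}, \eqref{eq:rho-integability} and $u\in L^2H^1$. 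One must also check that the values at time $0$ agree with $F_{\delta,\epsilon}(0,x)$ computed from the initial data; this holds by weak continuity of $\rho$ and because the initial data are attained in the formulation.
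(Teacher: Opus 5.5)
Your proposal is correct and follows the paper's proof essentially step by step. Both arguments apply \Cref{lem:rel-entropy-eval} pointwise, use energy conservation, note that the molecular pressure is constant so the radiative part of $\bar p_{\delta,\epsilon}$ contributes only $Ca$, and test the entropy inequality with $\bar\theta_{\delta,\epsilon}$ and the continuity equation with $\bar c_{\delta,\epsilon}$ via an approximation argument.

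One small correction: $u_\delta$ is only $L^2$ in time, so $\partial_t\bar\theta_{\delta,\epsilon}$ lies in $L^2(0,T;L^\infty(\T^3))$ rather than being bounded, and $\bar\theta_{\delta,\epsilon}$ is not Lipschitz in $t$. This does not affect the argument, because the integrability $\rho,\ \rho s\in L^\infty(0,T;L^{4/3}(\T^3))$ that you and the paper invoke still justifies the approximation.
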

\begin{proof}
    We note that by Lemma \ref{lem:rel-entropy-eval}, the molecular pressure associated to the states of $V_{\delta,\epsilon}$ having constant value $\overline p_M$, and conservation of energy~\eqref{eq:E-con} that 
    \begin{equation} \label{eq:lemma-ent-inq-1}
    \begin{aligned}
        \left.\int_{\T^3} F_{\delta,\epsilon}(\cdot,x) \,dx\right|_0^t 
        &= \left.\int_{\T^3} \mathcal{E}(t,x)+ \overline p_{\delta,\epsilon}(\cdot,x) -\overline \theta_{\delta,\epsilon}(\cdot,x) (\rho s)(\cdot,x) - \rho(\cdot,x) \overline c_{\delta,\epsilon}(\cdot,x) \,dx\right|_0^t \\
        &= -\left.\int_{\T^3} \overline \theta_{\delta,\epsilon}(\cdot,x) (\rho s)(\cdot,x)\,dx \right|_0^t -   \left.\int \rho(\cdot,x) \overline c_{\delta,\epsilon}(\cdot,x) \,dx\right|_0^t \\
        &\qquad+ \frac{a}{3}\left.\int_{\T^3} \bar\theta_{\delta,\epsilon}(\cdot,x)^4\,dx\right|_0^t.
    \end{aligned}
    \end{equation}
    for almost every $t$. 
    Since $(\rho, m, \mathcal{E})$ is an admissible solution to the Navier--Stokes--Fourier system we wish to use $\bar c_{\delta,\epsilon},\bar \theta_{\delta,\epsilon}$ as test functions in equations~\eqref{eq:nsf-cont} and~\eqref{eq:nsf-entropy-inequality}. We note that as defined, $\bar c_{\delta,\epsilon},\bar \theta_{\delta,\epsilon}$ are not in $C^1([0,T]\times \T^3)$, but we have that $\bar c_{\delta,\epsilon},\bar \theta_{\delta,\epsilon}\in L^2(0,T; H^k(\T^3))$ and $\partial_t\bar c_{\delta,\epsilon}, \partial_t\bar \theta_{\delta,\epsilon} \in L^2(0,T; L^4(\T^3))$ for all $k \geq 0$ by the $\epsilon$ mollification.
    Since we have $(-\rho s),\rho \in L^\infty([0,T]; L^{4/3} (\T^3))$ thanks to the estimates~\eqref{eq:rho-integability}, \Cref{lemma:rho-s-integrability}, and $\gamma >2$ we can establish equations~\eqref{eq:nsf-cont} and~\eqref{eq:nsf-entropy-inequality} with $\bar c_{\delta,\epsilon},\bar \theta_{\delta,\epsilon}$ as test functions via an approximation argument. 
    For the first term, 
    \begin{align*}
        \left.-\int_{\T^3} \overline \theta_{\delta,\epsilon}(\cdot,x) (\rho s)(\cdot,x)\,dx \right|_0^t &\leq -\int_0^t\int_{\T^3} \rho s\left( \partial_t \bar \theta_{\delta,\epsilon} + u\cdot \nabla \bar \theta_{\delta,\epsilon}\right) + \kappa \frac{\mathbf{q}(\theta,\nabla \theta)\cdot \nabla \bar \theta_{\delta,\epsilon}}{\theta} \,dx\,d\tau \\
        &\hphantom{\leq}-\int_0^t \int_{\T^3} \frac{\bar \theta_{\delta,\epsilon}}{\theta} \left( {\nu}\ \S(\theta,\nabla u):\nabla u - \kappa \frac{\mathbf{q}(\theta,\nabla \theta)\cdot \nabla \theta}{\theta} \right)\,dx\,d\tau.
    \end{align*}
    For the second, we find
    \begin{align*}
        \left.\int \rho(\cdot,x) \overline c_{\delta,\epsilon}(\cdot,x) \,dx\right|_0^t &= \int_0^t\int_{\T^3} \rho\left( \partial_t \bar c_{\delta,\epsilon} + u\cdot \nabla \bar c_{ \delta,\epsilon}\right)\,dx\,d\tau.
    \end{align*}
    The final term of~\eqref{eq:lemma-ent-inq-1} can be easily bounded thanks to $\T^3$ being a finite measure space and the bound~\eqref{eq:psi-bounded}.
    Substituting these into~\eqref{eq:lemma-ent-inq-1} and recalling~\eqref{eq:heat-flux} we arrive at the bound~\eqref{eq:rel-entropy-inequality}.
\end{proof}

We now simplify the bound attained in \Cref{lem:rel-entropy-shifts}.
Combining~\eqref{eq:visc-assum} and~\eqref{eq:heat-cond-assum} with \Cref{lemma:gen-korn} we find the bounds
\begin{align*}
\kappa \int_0^t\int_{\T^3} -\frac{\mathbf{q}(\theta,\nabla \theta)\cdot \nabla \theta}{\theta^2}\,dx\,dt &\geq  C\kappa\int_0^t \int_{\T^3}  \frac{1+\theta^3}{\theta^2} | \nabla \theta|^2\,dx\,dt,
\end{align*}
while the viscous dissipation term is non-negative.

Next, since $\mathcal{G}$ is Lipschitz, using our constitutive relation for the radiative energy~\eqref{eq:energy}, we have the bound 
$$ \mathcal{G}(\mathcal{E} - \bar {\mathcal{E}}) \leq \mathcal{G}(\mathcal{E} - \bar {\mathcal{E}}^a) + 2a \bar \theta_{\delta,\epsilon}^4.$$

From this, \Cref{lemma:coercive}, and $\bar \theta_{\delta,\epsilon}$ being bounded above there exists $C > 0$ such that for all $T > 0$ the following bound holds,
\begin{equation} \label{eq:rel-entropy-simp}
    \begin{aligned}
        \esssup_{t \in [0,T]} \left[ \int_{\T^3}  \mathcal{G}(m) + \mathcal{F}(\rho) + \mathcal{G}(\mathcal{E} - \bar{\mathcal{E}})\,dx \right]&+ \int_0^T \int_{\T^3}  \kappa \frac{1+\theta^3}{\theta^2}|\nabla \theta|^2  \,dx\,d\tau \\
        \leq&\, C \kappa \int_0^T\int_{\T^3} \frac{1+\theta^3}{\theta} |\nabla \theta | |\nabla \bar\theta_{\delta,\epsilon}|\,dx\,d\tau  \\
        &+ C\esssup_{t \in [0,T]}\left|\int_0^t \int_{\T^3} \rho [\partial_t \bar c_{\delta,\epsilon} + u\cdot \nabla \bar c_{\delta,\epsilon}] \,dx\,d\tau \right|\\   
        &+ C \esssup_{t \in [0,T]}\left|\int_0^t \int_{\T^3} \rho s [\partial_t \bar \theta_{\delta,\epsilon} + u\cdot \nabla \bar \theta_{\delta,\epsilon}] \,dx\,d\tau\right|\\
        &+ C\int_{\T^3} F_{\delta,\epsilon}(0,x)\,dx + Ca. \\
    \end{aligned}
\end{equation}

\subsection{Vanishing dissipation limit} \label{sec:diss-limit}
Since the coercive quantities are convex we can use weak lower semi-continuity to gain bounds on the weak limits of $\rho$, $m$, and $\mathcal{E}$ when the dissipation parameters, $a$, and the mollification parameters tend to zero. 
We choose to take our limits in the order $\kappa \to 0,\ a \to 0,\ \delta\to 0,\ \epsilon\to 0,$ and finally $\nu \to 0$. 
This order can be quickly justified by examining the terms appearing on the right hand side of the inequality~\eqref{eq:rel-entropy-simp},
\begin{itemize}
    \item The limit $\kappa \to 0$ is performed prior to $a \to 0$ to maintain higher integrability of $\theta$. Furthermore, it is completed prior to $\delta, \epsilon \to 0$ to take advantage of having a uniform estimate on $|\nabla \bar \theta_{\delta,\epsilon}|$ from the mollification.
    \item We perform the limit $a\to 0$ prior to $\delta \to 0$ and $\epsilon \to 0$ because the radiative entropy term $\rho s_R = \frac{4}{3} \theta^3$ is merely $L^{4/3}$, which causes difficulty as the transport terms do not converge to zero weakly in $L^p$ for $p > 2$ when $\delta,\epsilon \to 0$. 
        In the limit $a\to 0$ we observe that $\rho s \to \rho s_M$, which allows us to conclude that our weak limit has a higher degree of integrability by \Cref{lemma:rho-s-integrability}.
    \item Finally, $\delta \to 0,\epsilon \to 0$ is performed prior to $\nu \to 0$ because we wish to use the uniform $L^2$ bounds of $\div u$ to help control the terms arising from our mollified transport equation.
\end{itemize}
Furthermore, in the preceding sections, our solutions depended on fixed constants $\kappa,\ a,\ \nu > 0$, where we neglected to denote the dependence of our solutions on these constants.
For clarity, in this section we further notate our quantities with superscripts $\kappa,\ a,\ \nu $ to keep track of this dependence. 
After each limit, we will drop the associated superscript.
Additionally, when mentioning the entropy, we will write $(\rho s)^{\kappa,a,\nu}$ rather than $\rho^{\kappa, a, \nu} s^{\kappa, a, \nu}$. This is done because there are no a priori estimates allowing us to gain strong limits of either $\rho$ or $s$, so we instead obtain weak limits such as $(\rho s)^{\kappa,a,\nu} \wto (\rho s)^{a,\nu}$ where $(\rho s)^{a,\nu}$ may not be equal to the product of weak limits $\rho^{a, \nu} s^{ a, \nu}$.

\subsubsection{Heat conductivity limit ($\kappa \to 0$)} 
{The goal of this subsection is to perform the heat conductivity limit, $\kappa \to 0$, in~\eqref{eq:rel-entropy-simp}.
This is accomplished by studying \eqref{eq:rel-entropy-simp} term by term, using the a priori estimates obtained in \Cref{sec:prelim-bounds}, the Aubin--Lions lemma, and results about commutators.}

We begin by noting that Young's inequality gives the bound
\begin{equation}
    \begin{aligned}
    C\kappa \int_0^T \int_{\T^3} \frac{1+(\theta^{\kappa,a,\nu})^3}{\theta^{\kappa,a,\nu}} |\nabla \theta^{\kappa,a,\nu} | |\nabla \bar \theta^{\kappa,a,\nu}_{\delta,\epsilon}|\,dx\,d\tau 
        \leq&\,\frac{\kappa}{2} \int_0^T \int_{\T^3} \frac{1+(\theta^{\kappa,a,\nu})^3}{(\theta^{\kappa,a,\nu})^2} |\nabla \theta^{\kappa,a,\nu} |^2\,dx\,d\tau \\
        &+ C\kappa \int_0^T \int_{\T^3} (1+(\theta^{\kappa,a,\nu})^3) |\nabla \bar \theta_{\delta,\epsilon}^{\kappa,a,\nu}|^2\,dx\,d\tau.
    \end{aligned}
\end{equation}
This allows us to rewrite~\eqref{eq:rel-entropy-simp} as
\begin{multline} \label{eq:rel-entropy-simp-J1}
        \esssup_{t\in[0,T]} \left[ \int_{\T^3} \mathcal{G}(m^{\kappa,a,\nu}) + \mathcal{F}(\rho^{\kappa,a,\nu}) + \mathcal{G}(\mathcal{E}^{\kappa,a,\nu} - \bar{\mathcal{E}} )\,dx \right] + \int_0^T \int_{\T^3} \kappa \frac{1+(\theta^{\kappa,a,\nu})^3}{2(\theta^{\kappa,a,\nu})^2}|\nabla \theta^{\kappa,a,\nu}|^2 \,dx\,d\tau \\
        \leq\, C\left(\int_{\T^3} F_\epsilon^a(0,x)\,dx + Ca + J_1 + \sup_{t \in [0,T]} |J_2| + \sup_{t \in [0,T]} |J_3| \right),
\end{multline}
where 
\begin{equation} \label{eq:J-def}
    \begin{aligned}
    J_1&= \kappa \int_0^T \int_{\T^3} (1+(\theta^{\kappa,a,\nu})^3) |\nabla \bar \theta_{\delta,\epsilon}^{\kappa,a,\nu}|^2\,dx\,d\tau, \\
    J_2 &= \int_0^t \int_{\T^3} \rho^{\kappa,a,\nu}h'(\bar\theta_{\delta,\epsilon}^{\kappa,a,\nu}) \left[\partial_t \bar\theta_{\delta,\epsilon}^{\kappa,a,\nu} + u^{\kappa,a,\nu}\cdot \nabla  \bar\theta_{\delta,\epsilon}^{\kappa,a,\nu}\right] \,dx\,d\tau, \\
    J_3 &= \int_0^t \int_{\T^3}(\rho s)^{\kappa, a, \nu}  \left[\partial_t \bar\theta_{\delta,\epsilon}^{\kappa,a,\nu} + u^{\kappa,a,\nu}\cdot \nabla  \bar\theta_{\delta,\epsilon}^{\kappa,a,\nu}\right] \,dx\,d\tau,
    \end{aligned}
\end{equation}
and $h$ is the $C^1$ function satisfying $h(\bar\theta_{\delta,\epsilon}^{\kappa,a,\nu}) = \bar c_{\delta,\epsilon}^{\kappa,a,\nu}$.
Note that since $\bar \theta^{\kappa, a,\nu}_{\delta, \epsilon} = \bar\theta^{\kappa, a,\nu}_{\delta} \ast \varphi_\epsilon$ and $\bar\theta^{\kappa, a,\nu}_{\delta}$ is bounded~\eqref{eq:psi-bounded} we have the bound 
$$ \int_0^T \int_{\T^3} (1+(\theta^{\kappa,a,\nu})^3) |\nabla \bar\theta_{\delta ,\epsilon}^{\kappa,a,\nu}|^2\,dx\,d\tau \leq C(\epsilon,T)\left(1 +  \norm{\theta^{\kappa,a,\nu}}_{L^\infty(0,T;L^4(\T^3))}^3\right). $$ 
Additionally, by conservation of energy~\eqref{eq:bounds-from-energy} we have a uniform in $\kappa$ bound on $\norm{\theta^{\kappa,a,\nu}}_{L^\infty(0,T;L^4(\T^3))}$.
Therefore, 
$$  J_1 \to 0\quad \text{as}\quad \kappa \to 0.$$


To begin examining $J_2$ and $J_3$, we define the transport defect, which, as we will see later, corresponds to a commutator,
\begin{align}
    R^{\kappa,a,\nu}_{\delta,\epsilon} &:=\partial_t \bar\theta_{\delta,\epsilon}^{\kappa,a,\nu} + u^{\kappa,a,\nu}\cdot \nabla  \bar\theta_{\delta,\epsilon}^{\kappa,a,\nu},
\end{align}
giving $J_2$ and $J_3$ the form
\begin{align*}
    J_2 &=  \int_0^t \int_{\T^3} \rho^{\kappa,a,\nu}  h'(\bar\theta_{\delta,\epsilon}^{\kappa,a,\nu})R^{\kappa,a,\nu}_{\delta,\epsilon}\,dx\,d\tau,\\
    J_3 &=  \int_0^t \int_{\T^3} (\rho s)^{\kappa,a,\nu} R^{\kappa,a,\nu}_{\delta,\epsilon}\,dx\,d\tau. 
\end{align*}

To take the limits of $J_2$ and $J_3$ we will show 
\begin{align*}
    \rho^{\kappa,a,\nu} \to \rho^{a,\nu},\quad (\rho s)^{\kappa,a,\nu} \to (\rho s)^{a,\nu} \quad \quad\quad \text{in}&\quad L^2(0,T;H^{-1}(\T^3)) \quad\text{as}\quad\kappa\to0,\\
     R^{\kappa,a,\nu}_{\delta,\epsilon} \wto  R^{a,\nu}_{\delta,\epsilon}\quad \text{weakly in}&\quad L^2(0,T;H^{1}(\T^3))\quad\text{as}\quad\kappa\to 0,
\end{align*} 
and $\bar\theta_{\delta,\epsilon}^{\kappa,a,\nu} \to \bar \theta_{\delta,\epsilon}^{a,\nu}$ in $L^\infty([0,T]\times \T^3)$. 
To obtain the strong convergence of $\rho^{\kappa,a,\nu}$ and $(\rho s)^{\kappa, a,\nu}$ we will make use of the following variant of the Aubin--Lions Lemma.
\begin{lemma}[{\cite[Lemma 6.3]{feireisl04}}] \label{lem:aubin-lions}
    Let $\{v_n\}_{n=1}^\infty$ be a sequence of functions such that the $v_n$ are uniformly bounded in $L^2(0,T; L^q(\T^3)) \cap L^\infty (0,T; L^1(\T^3))$ where $q > \frac{6}{5}$. 
    Additionally, suppose 
    $$ \partial_t v_n \geq g_n \quad \text{in}\quad \mathcal{D}'( (0,T) \times \T^3)$$
    with 
    \beq \label{eq:g-assum} 
        g_n\quad \text{uniformly bounded in}\quad  L^1(0,T; W^{-m,r}(\T^3))\quad \text{for} \quad m\geq 1,\ r>1.\eeq
    Then there exists a subsequence of $\{v_n\}_{n=1}^\infty$ such that 
    $$ v_n \to v \quad \text{in}\quad L^2(0,T; H^{-1}(\T^3)). $$
\end{lemma}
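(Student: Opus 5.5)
This is a cited variant of the Aubin--Lions lemma, so the plan is to prove it by a compactness argument in negative Sobolev spaces. Weak time regularity is built from the one-sided bound $\partial_t v_n \ge g_n$; the uniform bounds in $L^2(0,T;L^q)$ then upgrade this to strong convergence in $L^2(0,T;H^{-1})$.

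\textbf{Embeddings.} Since $q>6/5$, the dual exponent satisfies $q'<6$. Hence $H^1(\T^3)\hookrightarrow L^{q'}(\T^3)$ compactly, and by duality $L^q(\T^3)\hookrightarrow H^{-1}(\T^3)$ compactly.

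\textbf{Measure splitting.} Write $\partial_t v_n = g_n + \mu_n$, where $\mu_n\ge 0$ is a nonnegative distribution, hence a nonnegative Radon measure on $(0,T)\times\T^3$. We bound $\mu_n$ in total variation on compact subsets $[t_1,t_2]\times\T^3$ by testing against $\psi(t)\cdot 1_{\T^3}$ with a cutoff $\psi\ge 0$:
\[
\langle \mu_n, \psi\rangle = -\int v_n \psi'\,dx\,dt - \langle g_n,\psi\rangle .
\]
The first term is bounded via the $L^\infty(0,T;L^1)$ bound. The second is bounded by \eqref{eq:g-assum}, since $1\in W^{m,r'}$. Thus $\partial_t v_n$ is uniformly bounded in $L^1_{loc}(0,T;W^{-m,r})+\mathcal{M}_{loc}((0,T);\mathcal{M}(\T^3))$. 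Because $\mathcal{M}(\T^3)\hookrightarrow W^{-k,2}$ for $k>3/2$, both pieces lie in a common $\mathcal{M}_{loc}(0,T;W^{-M,2})$ for $M$ large.

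\textbf{Compactness.} We now have uniform bounds on $v_n$ in $L^2(0,T;L^q)$ and on $\partial_t v_n$ in $\mathcal{M}_{loc}(0,T;W^{-M,2})$, that is, bounded variation in time into a big space. The BV-in-time version of the Aubin--Lions--Simon lemma applies to the triple $L^q\subset\subset H^{-1}\subset W^{-M,2}$, with translations in time controlled by the total variation. It gives precompactness in $L^p_{loc}(0,T;H^{-1})$ for $p<\infty$; in particular $v_n$ converges strongly along a subsequence in $L^1_{loc}(0,T;H^{-1})$.

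\textbf{Upgrading to $L^2(0,T;H^{-1})$.} We use a truncation argument. First extract a diagonal subsequence over compact subintervals $[\eta,T-\eta]$. Since $\|v_n\|_{L^2(0,T;H^{-1})}\le C\|v_n\|_{L^2(0,T;L^q)}$, the sequence is bounded in $L^2(0,T;H^{-1})$. Strong convergence in $L^1$ upgrades to strong convergence in $L^2$ provided $\|v_n\|_{H^{-1}}^2$ is equi-integrable in time. The bound in $L^\infty(0,T;L^1)\subset L^\infty(0,T;W^{-k,2})$ plus interpolation with $L^2(0,T;L^q)$ gives uniform control in $L^{2+\sigma}(0,T;H^{-1-\beta})$, which allows convergence up to the endpoint layers near $t=0,T$ to be handled with uniformly small error. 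I expect some care here: one option is to interpolate $H^{-1}$ between $L^q$-type spaces and $W^{-k,2}$, accepting convergence in $L^2(0,T;H^{-1})$ via equi-integrability derived from the strict inequality $q>6/5$ (room to use a slightly smaller $q$).

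\textbf{Main obstacle.} The hardest step is the BV-in-time compactness. Only a one-sided inequality is available, so the measure part of $\partial_t v_n$ must be controlled. The trick of testing with the constant function in space is what rescues this, and it crucially uses nonnegativity together with the $L^\infty(0,T;L^1)$ bound.
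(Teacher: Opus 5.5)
The paper gives no proof of this lemma; it is quoted from Feireisl's monograph. Your argument therefore has to stand on its own, and its core is correct and standard:
\begin{itemize}
\item $\mu_n=\partial_t v_n-g_n\ge 0$ is a Radon measure, and testing with $\psi(t)\cdot 1$ controls its mass through the $L^\infty(0,T;L^1)$ bound.
\item $W^{-m,r}(\T^3)$ and $\mathcal{M}(\T^3)$ both embed into $W^{-M,2}(\T^3)$ for $M$ large.
\item A BV-in-time Aubin--Lions--Simon argument with $L^q\hookrightarrow\hookrightarrow H^{-1}\hookrightarrow W^{-M,2}$ gives compactness. The compactness of the first embedding is exactly where $q>6/5$ enters.
\end{itemize}

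The final ``upgrading'' step needs repair: it is unnecessary, and as you first state it, it is wrong.

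\textbf{The localisation is unnecessary.} Take $0\le\psi\le 1$ in $C^\infty_c(0,T)$, first nondecreasing and then nonincreasing. Then $\|\psi'\|_{L^1(0,T)}\le 2$ whatever its support, so
\[
\mu_n\bigl((0,T)\times\T^3\bigr)\le 2\sup_n\|v_n\|_{L^\infty(0,T;L^1(\T^3))}+C\sup_n\|g_n\|_{L^1(0,T;W^{-m,r}(\T^3))}.
\]
Hence $\partial_t v_n$ is bounded in $\mathcal{M}((0,T);W^{-M,2})$ on the whole interval, which gives $\|v_n(\cdot+h)-v_n\|_{L^1(0,T-h;W^{-M,2})}\le Ch$. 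Together with the bound in $L^\infty(0,T;W^{-M,2})$ (from $L^1\hookrightarrow W^{-M,2}$), this is $O(h^{1/2})$ in $L^2(0,T-h;W^{-M,2})$. Simon's compactness theorem, applied with the bound in $L^2(0,T;L^q)$, then gives relative compactness directly in $L^2(0,T;H^{-1})$. There are no endpoint layers to handle.

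\textbf{Two intermediate claims are incorrect.}
\begin{itemize}
\item Simon's theorem gives compactness in $L^p(0,T;H^{-1})$ only for $p$ up to the time integrability of the bound in the compactly embedded space. It does not give it for all $p<\infty$.
\item A bound in $L^{2+\sigma}(0,T;H^{-1-\beta})$ controls a weaker norm, so it gives no equi-integrability of $\|v_n\|_{H^{-1}}^2$.
\end{itemize}

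\textbf{If you keep the localisation route}, the alternative you mention afterwards is the one that works. Since $q>6/5$, we have $L^q\hookrightarrow H^{-\alpha}$ with $\alpha=\max\{0,3/q-3/2\}<1$, while $L^1\hookrightarrow H^{-k}$ for $k>3/2$. Choose $\lambda=(1-\alpha)/(k-\alpha)\in(0,1)$, so that $(1-\lambda)\alpha+\lambda k=1$. Interpolation then gives $\|v\|_{H^{-1}}\le C\|v\|_{L^q}^{1-\lambda}\|v\|_{L^1}^{\lambda}$. This yields a uniform bound on $v_n$ in $L^{2/(1-\lambda)}(0,T;H^{-1})$. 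That is precisely the equi-integrability needed to pass from convergence on $[\eta,T-\eta]$ to convergence on $(0,T)$.
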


To begin, we note that by Lemma \ref{lemma:rho-s-integrability} $(\rho s_M)^{\kappa,a,\nu}$ is bounded in $L^2(0,T; L^2(\T^3)) \cap L^\infty (0,T; L^1(\T^3))$. 
For the radiative component we note that by conservation of energy~\eqref{eq:bounds-from-energy}, $\norm{\theta^{\kappa,a,\nu}}_{L^\infty(0,T;L^4(\T^3))}$ is uniformly bounded with respect to $\kappa$. 
Hence, by the definition of $s_R$~\eqref{eq:entropy-def}, we have 
\begin{equation} \label{eq:radiative-entropy-bound-kappa-limit} 
    a(\rho s_R)^{\kappa,a,\nu} \quad\text{is bounded in}\quad L^\infty(0,T; L^{4/3}(\T^3)),
\end{equation} 
and thanks to $L^p$ nesting we have
\begin{equation}\label{eq:rho-s-bound-KAPPA} 
    (\rho s)^{\kappa,a,\nu} \quad \text{is bounded in}\quad L^2(0,T; L^{4/3}(\T^3)) \cap L^\infty (0,T; L^1(\T^3)),
\end{equation}
where we note that $q = \frac{4}{3} > \frac{6}{5}$.
Next, by the entropy inequality~\eqref{eq:nsf-entropy-inequality} we know that $(\rho s)^{\kappa,a,\nu}$ distributionally satisfies
\beq \label{eq:entropy-diff-ineq-AL}\begin{aligned}
    \partial_t (\rho s)^{\kappa,a,\nu} \geq& -\div( (\rho s)^{\kappa,a,\nu} u^{\kappa,a,\nu})- \kappa\div \left(\frac{\mathbf{q}(\theta^{\kappa,a,\nu}, \nabla \theta^{\kappa,a,\nu})}{\theta^{\kappa,a,\nu}}\right) \\
    &+ \frac{1}{\theta^{\kappa,a,\nu}} \left( \nu\ \S(\theta^{\kappa,a,\nu}, \nabla u^{\kappa,a,\nu}):\nabla u^{\kappa,a,\nu} - \kappa\frac{\mathbf{q}(\theta^{\kappa,a,\nu}, \nabla \theta^{\kappa,a,\nu})}{\theta^{\kappa,a,\nu}} \cdot \nabla \theta^{\kappa,a,\nu} \right) \\
    =:& \sum_{i=1}^4 g_i^\kappa.
\end{aligned} \eeq

For $g_1^\kappa$ we note that
$$ \norm{(\rho s)^{\kappa,a,\nu} u^{\kappa,a,\nu}}_{L^2(0,T; L^{12/11}(\T^3))} \leq \norm{(\rho s)^{\kappa,a,\nu}}_{L^\infty (0,T; L^{4/3}(\T^3))}  \norm{u^{\kappa,a,\nu}}_{L^2(0,T; L^6(\T^3))}$$ 
yielding an upper bound in $L^2(0,T; W^{-1,12/11}(\T^3))$ uniform in $\kappa$ and $a$ due to \Cref{lemma:rho-s-integrability}, the bound~\eqref{eq:u-H1-bound} and equation~\eqref{eq:radiative-entropy-bound-kappa-limit}. 

Next, {thanks again to the H\"older inequality}, we observe {that} for $g_2^\kappa$
\beq \label{eq:q-kappa-bound} 
\begin{aligned}
    \norm{\kappa \frac{\mathbf{q}(\theta^{\kappa,a,\nu}, \nabla \theta^{\kappa,a,\nu})}{\theta^{\kappa,a,\nu}}}_{L^2(0,T;L^{8/7}(\T^3))} &\leq \kappa\norm{\sqrt{K(\theta^{\kappa,a,\nu})}}_{L^\infty(0,T; L^{8/3}(\T^3))} \\
    &\qquad\times\norm{\frac{\sqrt{K(\theta^{\kappa,a,\nu})}}{\theta^{\kappa,a,\nu}}\nabla \theta^{\kappa, a,\nu}}_{L^2(0,T; L^2(\T^3))} \\
    &\leq  C\kappa \left( 1 + \norm{\theta}_{L^\infty(0,T; L^4(\T^3))}^{3/2}\right) \\
    &\qquad\times\norm{\frac{\sqrt{K(\theta^{\kappa,a,\nu})}}{\theta^{\kappa,a,\nu}}\nabla \theta^{\kappa, a,\nu}}_{L^2(0,T; L^2(\T^3))},
\end{aligned}\eeq
where the right is uniformly bounded for $\kappa \in (0,1)$ and $a$ fixed due to~\eqref{eq:bounds-from-energy} and~\eqref{eq:bounds-from-diss} giving a uniform bound on $g_2^\kappa \in L^2(0,T; W^{-1,\, 8/7}(\T^3))$.
We note that both $g_3,g_4 \geq 0$. 
This establishes by \Cref{lem:aubin-lions} that
$$ (\rho s)^{\kappa, a,\nu} \to (\rho s)^{a,\nu} \quad \text{in} \quad L^2(0,T; H^{-1}(\T^3))\quad \text{as}\quad \kappa \to 0.$$

For the density, we recall that $\rho^{\kappa,a,\nu}$ is uniformly bounded in $ L^\infty(0,T; L^\gamma(\T^3)) $ by energy conservation and our constitutive relation~\eqref{eq:rho-integability}.
Additionally, the continuity equation gives us that
$$ \partial_t \rho^{\kappa,a,\nu} = - \div(u^{\kappa,a,\nu} \rho^{\kappa,a,\nu}) $$
from which, thanks to Hölder's inequality,
$$ \norm{u^{\kappa,a,\nu} \rho^{\kappa,a,\nu}}_{L^1(0,T; L^q(\T^3))} \leq \norm{u^{\kappa,a,\nu}}_{L^2(0,T; L^6(\T^3))} \norm{\rho^{\kappa,a,\nu}}_{L^2(0,T; L^\gamma(\T^3))},$$ 
with $q=\frac{6\gamma}{\gamma+6}>1$ since $\gamma>2$. Using equations~\eqref{eq:u-H1-bound} and~\eqref{eq:rho-integability}, the right hand side is uniformly bounded with respect to $\kappa$ and $a$.
This shows, {again using \Cref{lem:aubin-lions}, that} 
$$ \rho^{\kappa,a,\nu} \to \rho^{a,\nu} \quad \text{in}\quad L^2(0,T;H^{-1}(\T^3))\quad \text{as}\quad \kappa \to 0$$ 
as desired.

For the transport term $ R_{\delta,\epsilon}^{\kappa,a,\nu} $ we first note that, by mollifying the transport equation~\eqref{eq:delta-shift-pde} by $\varphi_\epsilon$ as defined in~\eqref{eq:def-mol}, we have 
$$ \partial_t \bar\theta^{\kappa,a,\nu}_{\delta,\epsilon}  = -\left(u^{\kappa, a,\nu}_\delta \cdot \nabla \bar\theta^{\kappa,a,\nu}_\delta\right)\ast\varphi_\epsilon = -\div\left([u^{\kappa, a,\nu}_\delta \bar\theta^{\kappa,a,\nu}_\delta]_\epsilon\right) + [\div(u^{\kappa, a,\nu}_\delta )\bar\theta^{\kappa,a,\nu}_\delta]_\epsilon.$$
Using this we rewrite $ R^{\kappa,a,\nu}_{\delta,\epsilon} $ as
$$ R^{\kappa,a,\nu}_{\delta,\epsilon} = u^{\kappa,a,\nu} \cdot \nabla \bar\theta^{\kappa,a,\nu}_{\delta,\epsilon} - \div ([u^{\kappa,a,\nu}_\delta\bar\theta^{\kappa,a,\nu}_\delta]_\epsilon) +[\div(u^{\kappa,a,\nu}_\delta) \bar\theta^{\kappa,a,\nu}_\delta]_\epsilon. $$
Let us now show that 
\begin{equation}
\label{eq:weak_conv_R}
    R^{\kappa,a,\nu}_{\delta,\epsilon} \wto R^{a,\nu}_{\delta,\epsilon} \qquad \hbox{in} \quad L^2(0,T; H^1(\T^3)).
\end{equation}
Observe that $\bar\theta^{\kappa,a,\nu}_{\delta,\epsilon}$ is uniformly bounded in $L^\infty(0,T; H^2(\T^3))$ as $|\bar\theta^{\kappa, a, \nu}_\delta| \leq C$ by~\eqref{eq:psi-bounded} and the mollification in $\epsilon$. 
By definition, it further satisfies 
\begin{equation} \label{eq:kappa-lim-transport} \partial_t \bar\theta^{\kappa,a,\nu}_\delta = -u^{\kappa,a,\nu}_\delta \cdot \nabla \bar\theta^{\kappa,a,\nu}_\delta, \end{equation}
where equation~\eqref{eq:u-H1-bound} and mollification by $\delta$ give that the right side is  bounded in $L^2(0,T; H^{-1}(\T^3))$ uniformly with respect to the parameters $\kappa$ and $a$.
Combining this with the bound~\eqref{eq:psi-bounded} we can apply the Aubin--Lions lemma to deduce that
\begin{equation} \label{eq:kappa-psi-strong-cv} \bar\theta^{\kappa,a,\nu}_\delta \to \bar\theta^{a,\nu}_\delta \quad \text{in}\quad L^2(0,T; H^{-1}(\T^3))\quad \text{as}\quad\kappa \to 0.\end{equation}
Next, we note that mollification by $\delta$ gives us $\norm{u^{\kappa,\nu}_\delta}_{L^2(0,T; H^2(\T^3))} \leq C$ where $C$ is uniform with respect to $\kappa$ and $a$.

Now that we are in possession of the strong limit of $\bar\theta_{\delta,\epsilon}^{\kappa,a,\nu}$, we introduce the following lemma to obtain weak limits for terms in $R_{\delta,\epsilon}^{\kappa, a, \nu}$.
\begin{lemma}[Lemma 3.5 of \cite{kvy21}] \label{lem:kyv-R-cv}
    Assume that $\{u_n\}_{n=1}^\infty$ and $\{\psi_n\}_{n=1}^\infty$ satisfy one of the following:
    \begin{enumerate}
        \item $u_n \to u$ in $L^2(0,T; H^1(\T^3))$ and $\psi_n \wsto \psi$ in $L^\infty((0,T)) \times \T^3)$,
        \item $\psi_n \to \psi$ in $L^2(0,T; H^{-1}(\T^3))$ and 
        $$\norm{\psi_n}_{L^\infty( (0,T)\times \T^3)} + \norm{u_n}_{L^2(0,T; H^1(\T^3))} + \norm{ \nabla \div\, u_n}_{L^2(0,T; L^2(\T^3))} < C,$$ with $C$ independent of $n$. 
    \end{enumerate}
    There, for any $\epsilon > 0$ there exists a subsequence such that 
    \begin{align*}
        ( \div(u_n \psi_n))_\epsilon &\wto (\div(u\psi))_\epsilon \quad \text{weakly in}\quad L^2(0,T;H^1(\T^3)), \\
        ( \div(u_n) \psi_n)_\epsilon &\wto (\div(u)\psi)_\epsilon \quad \text{weakly in}\quad L^2(0,T;H^1(\T^3))
    \end{align*}
    where $u$ satisfies that $u_n\wto u$ in $L^2(0,T;H^1(\T^3))$. 
\end{lemma}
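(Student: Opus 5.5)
The approach is to test against a fixed $\phi\in L^2(0,T;H^{-1}(\T^3))$, which is the dual of $L^2(0,T;H^1(\T^3))$, and to move the mollifier onto the test function. Since $\varphi_\epsilon$ is even, we have $\langle (f)_\epsilon,\phi\rangle=\langle f,\phi_\epsilon\rangle$, and $\phi_\epsilon$ lies in $L^2(0,T;C^k(\T^3))$ for every $k$, with norm controlled by $C(\epsilon)\|\phi\|_{L^2(0,T;H^{-1})}$. The bounds $\|(\div(u_n\psi_n))_\epsilon\|_{L^2H^1}+\|(\div(u_n)\psi_n)_\epsilon\|_{L^2H^1}\le C(\epsilon)\|u_n\|_{L^2H^1}\|\psi_n\|_{L^\infty}$ hold uniformly in $n$ in both cases. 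For the second case I use $\|\psi_n\|_{L^\infty}$, which is among the hypotheses. By weak compactness it therefore suffices to identify the limit against a dense class of test functions, or against all $\phi_\epsilon$. A subsequence is extracted so that $u_n\wto u$ in $L^2H^1$ and $\psi_n\wsto\psi$ in $L^\infty$.

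In case (1), the product $u_n\psi_n$ converges weakly in $L^2(0,T;L^2)$ to $u\psi$. Indeed, $u_n\to u$ strongly in $L^2L^2$ and $\psi_n\wsto\psi$ in $L^\infty$, and strong times weak-$\ast$ converges weakly. Hence $\langle\div(u_n\psi_n),\phi_\epsilon\rangle=-\langle u_n\psi_n,\nabla\phi_\epsilon\rangle\to-\langle u\psi,\nabla\phi_\epsilon\rangle$. Similarly, $\div u_n\to\div u$ strongly in $L^2L^2$, so $\langle \div(u_n)\psi_n,\phi_\epsilon\rangle\to\langle\div(u)\psi,\phi_\epsilon\rangle$.

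In case (2), only $\psi_n$ converges strongly, and only in $L^2H^{-1}$. The idea is to pair it with something bounded in $L^2H^1$. For the first term, $\langle u_n\psi_n,\nabla\phi_\epsilon\rangle=\langle \psi_n, u_n\cdot\nabla\phi_\epsilon\rangle$. Here $u_n\cdot\nabla\phi_\epsilon$ is bounded in $L^1(0,T;H^1)$ and converges weakly. This is not quite enough duality-wise ($L^2H^{-1}$ against $L^1H^1$), so I split the test functions. I take $\phi$ smooth (dense), so that $\nabla\phi_\epsilon\in L^\infty(0,T;W^{1,\infty})$. Then $u_n\cdot\nabla\phi_\epsilon$ is bounded in $L^2H^1$ and converges weakly to $u\cdot\nabla\phi_\epsilon$ there. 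Strong convergence in $L^2H^{-1}$ against weak convergence in $L^2H^1$ gives convergence of the pairing. The second term $\langle\psi_n,\div(u_n)\phi_\epsilon\rangle$ is where the extra hypothesis $\nabla\div u_n\in L^2L^2$ enters. It makes $\div(u_n)\phi_\epsilon$ bounded in $L^2H^1$, hence weakly convergent to $\div(u)\phi_\epsilon$, and the same strong–weak pairing applies.

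The limits are then identified on a dense set, and the uniform $L^2H^1$ bounds upgrade this to weak convergence in $L^2(0,T;H^1)$. The limit is independent of the subsequence once $u,\psi$ are fixed.

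The main obstacle is the second case. One must check carefully that the products with $\phi_\epsilon$ truly lie in $L^2H^1$ uniformly, which is exactly why the $\nabla\div u_n$ bound is required. One must also check that the density argument is compatible with the uniform bounds.
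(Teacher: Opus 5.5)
Your proposal is correct and gives a complete proof. The paper does not prove this lemma; it quotes it from \cite{kvy21}, so there is no in-text argument to compare against.

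Your structure is sound. The uniform bound $\|(\div(u_n\psi_n))_\epsilon\|_{L^2H^1}+\|(\div(u_n)\psi_n)_\epsilon\|_{L^2H^1}\le C(\epsilon)\|u_n\|_{L^2H^1}\|\psi_n\|_{L^\infty}$ combined with identification of the limit is the right approach. The strong--weak pairing of $L^2H^{-1}$ against $L^2H^1$ on a dense class of smooth test functions is valid. You also use the $\nabla\div u_n$ bound only for $\div(u_n)\phi_\epsilon$ in case (2), which is the one place it is needed.

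Two small points should be stated.
\begin{enumerate}
\item The paper does not assume $\varphi$ is even. The adjoint identity should therefore read $\langle f_\epsilon,\phi\rangle=\langle f,\phi*\check\varphi_\epsilon\rangle$, with $\check\varphi_\epsilon(x)=\varphi_\epsilon(-x)$. This reflected kernel has the same smoothing properties, so nothing changes.
\item In case (1), the uniform $L^\infty$ bound on $\psi_n$, which your a priori estimate uses, comes from the uniform boundedness principle applied to a weak-$\ast$ convergent sequence.
\end{enumerate}

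A slightly leaner packaging of the same idea avoids handling $\phi_\epsilon$ for general $\phi\in L^2H^{-1}$. First show that $u_n\psi_n\wto u\psi$ and $\div(u_n)\psi_n\wto\div(u)\psi$ weakly in $L^2(0,T;L^2(\T^3))$. In case (1) this is immediate. In case (2), test against smooth $\Phi$, write $\int\!\!\int u_n\psi_n\Phi=\langle\psi_n,u_n\Phi\rangle$, and likewise with $\div(u_n)\Phi$. Then use the uniform $L^2L^2$ bounds. Finally, apply the bounded linear maps $f\mapsto\div(f_\epsilon)$ and $g\mapsto g_\epsilon$ from $L^2L^2$ into $L^2H^1$; they are weakly continuous.
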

Using \Cref{lem:kyv-R-cv}, we find that
\begin{align*}
        ( \div(u^{\kappa,a,\nu}_\delta \bar\theta^{\kappa,a,\nu}_{\delta}))_\epsilon &\wto (\div(u^{a,\nu}_\delta\bar\theta^{a,\nu}_\delta))_\epsilon \quad \text{weakly in} \quad L^2(0,T;H^1(\T^3)) \quad\text{as}\quad\kappa\to0, \\
        ( \div(u^{\kappa,a,\nu}_\delta) \bar\theta^{\kappa,a,\nu}_{\delta})_\epsilon &\wto (\div(u^{a,\nu}_\delta)\bar\theta^{a,\nu}_\delta)_\epsilon \quad
        \text{weakly in}\quad L^2(0,T;H^1(\T^3)) \quad\text{as}\quad\kappa\to0.
\end{align*}
This proves the weak convergence for the two rightmost terms of $R^{\kappa, a,\nu}_{\delta,\epsilon}.$
For the remaining term, we note that, by mollification by $\epsilon$ and the bound~\eqref{eq:u-H1-bound}, we have
$$ \norm{ u^{\kappa, a,\nu} \nabla \bar\theta^{\kappa, a,\nu}_{\delta,\epsilon}}_{L^2(0,T; H^1(\T^3))} \leq \norm{u^{\kappa, a, \nu}}_{L^2(0,T;H^1(\T^3))} \norm{\nabla \bar\theta^{\kappa, a, \nu}_{\delta,\epsilon}}_{L^\infty(0,T; W^{1,\infty}(\T^3))} $$
which is uniform with respect to $\kappa$ and $a$. 
This establishes that the product $u^{\kappa, a,\nu} \nabla \bar\theta^{\kappa, a,\nu}_{\delta,\epsilon}$ has a weak limit in $L^2(0,T; H^1(\T^3))$.
We can then conclude that this weak limit is equal to the product of weak limits, since due to convergence~\eqref{eq:kappa-psi-strong-cv} and mollification  
$$\nabla \bar\theta^{\kappa, a,\nu}_{\delta,\epsilon} \to \nabla\bar\theta^{a,\nu}_{\delta,\epsilon}\quad\text{in}\quad L^2(0,T;H^{-1}(\T^3))\quad \text{as}\quad \kappa\to 0.$$
This concludes the proof of \eqref{eq:weak_conv_R}.

Finally, since $\bar\theta_{\delta,\epsilon}^{\kappa,a,\nu}$ is uniformly bounded in $L^\infty(0,T; W^{2,\infty}(\T^3))$ and satisfies the mollified version of~\eqref{eq:kappa-lim-transport}, we note that, by the classical Aubin--Lions--Simon Lemma and the smoothness of $h$, 
$$ h'(\bar\theta_{\delta,\epsilon}^{\kappa,a,\nu}) \to h'(\bar\theta_{\delta,\epsilon}^{a,\nu})\quad\text{in}\quad C( 0,T;W^{1,\infty}(\T^3))\quad \text{as}\quad \kappa\to 0. $$
This establishes the desired weak limit
$$h'(\bar\theta_{\delta,\epsilon}^{\kappa,a,\nu})R^{\kappa,a,\nu}_{\delta,\epsilon} \wto  h'(\bar\theta_{\delta,\epsilon}^{a,\nu})R^{a,\nu}_{\delta,\epsilon}\quad \text{in}\quad L^2(0,T;H^{1}(\T^3))\quad \text{as}\quad \kappa\to 0, $$
which allows us to take the limit of $J_2$ as $\kappa \to 0$.

To complete our study of $(\rho s)^{a,\nu}$, we note that the norm $\norm{(\rho s_M)^{\kappa,a,\nu}}_{L^\infty( 0,T; L^2( \T^3))}$ is uniformly bounded with respect to $\kappa$ and $a$, by the conservation of energy~\eqref{eq:bounds-from-energy} and \Cref{lemma:rho-s-integrability}. 
Furthermore, by~\eqref{eq:entropy-def} the radiative entropy satisfies the bound
\begin{equation}
    \label{eq:kappa-lim-rad-entropy-control}
    \begin{aligned}
        \norm{(\rho s_R)^{\kappa,a,\nu}}_{L^\infty(0,T; L^{4/3}(\T^3 ))} &= \frac{4}{3}\norm{ (\theta^{\kappa,a,\nu})^3}_{L^\infty(0,T; L^{4/3}(\T^3))} \\
        &= \frac{4}{3} \norm{(\theta^{\kappa,a,\nu})^4}_{\L^\infty(0,T; L^1(\T^3))}^{3/4} \\
        &\leq \frac{4}{3} a^{-3/4} \norm{\mathcal{E}_0}_{L^1(\T^3)}^{3/4}
    \end{aligned}
\end{equation}
where the final line is due to~\eqref{eq:bounds-from-energy}.
As a result, up to a subsequence we have the limits 
\begin{equation}\label{eq:G-wlims-kappa}\begin{aligned}
    (\rho s_M)^{\kappa,a,\nu} &\wto (\rho s_M)^{a,\nu} &&\text{weakly-$\ast$ in}\quad L^\infty( 0,T; L^2(\T^3))\quad \ \ \text{as}\quad\kappa\to0,\\
    (\rho s_R)^{\kappa,a,\nu} &\wto (\rho s_R)^{a,\nu}&&\text{weakly-$\ast$ in}\quad L^\infty(0,T; L^{4/3}(\T^3 ))\quad \text{as}\quad \kappa\to0.
\end{aligned} \end{equation}
Then, we can identify the weak limit of the entropy as the sum of the weak limits of the corresponding molecular entropy and radiative entropy, 
$$ (\rho s )^{a,\nu} = (\rho s_M)^{a,\nu} + a(\rho s_R)^{a,\nu}.$$

To summarize this section, by the bounds~\eqref{eq:E-con} and~\eqref{eq:bounds-from-energy} we have the weak limits 
\begin{align*}
    \rho^{\kappa,a,\nu} &\wto \rho^{a,\nu}\quad \text{weakly-$\ast$ in}\quad L^\infty(0,T; L^\gamma(\T^3))\quad \text{as}\quad\kappa\to0, \\
    \mathcal{E}^{\kappa,a,\nu}&\wto \mathcal{E}^{a,\nu} \quad\text{weakly-$\ast$ in} \quad L^\infty(0,T; \mathcal{M}(\T^3))\quad \text{as}\quad\kappa\to0.
\end{align*}
Furthermore, we note that from the bound~\eqref{eq:bounds-from-energy} we control the kinetic energy, 
$$\norm{ \frac{m^{\kappa,a,\nu}}{2\sqrt{\rho^{\kappa,a,\nu}}}}_{L^\infty(0,T; L^2(\T^3))}^2 \leq \norm{\mathcal{E}_0}_{L^1(\T^3)}.$$
Combining this with our uniform bound of $\rho^{\kappa,a,\nu} \in L^\infty(0,T; L^\gamma(\T^3))$ from~\eqref{eq:rho-integability} yields 
\begin{equation} \label{eq:m-uniform-bound}\begin{aligned}
    \norm{m^{\kappa,a,\nu}}_{L^\infty(0,T; L^{2\gamma/(\gamma+1)}(\T^3; \R^3))} &= \norm{\sqrt{\rho^{\kappa,a,\nu}} \frac{m^{\kappa,a,\nu}}{\sqrt{\rho^{\kappa,a,\nu}}}}_{L^\infty(0,T; L^{2\gamma/(\gamma+1)}(\T^3; \R^3))} \\
    &\leq \norm{\rho^{\kappa,a,\nu}}_{L^\infty(0,T; L^\gamma(\T^3))}^{1/2} \norm{\frac{m^{\kappa,a,\nu}}{\sqrt{\rho^{\kappa,a,\nu}}}}_{L^\infty(0,T; L^2(\T^3;\R^3))}.
    \end{aligned}
\end{equation}
We note that the above bound only depends on $\mathcal{E}_0$ and is uniform in all parameters $\kappa$, $a$, and $\nu$. 
This uniform bound allows us to conclude that 
$$m^{\kappa,a,\nu}\wto m^{a,\nu} \quad\text{weakly-$\ast$ in} \quad L^\infty(0,T; L^{\frac{2\gamma}{\gamma + 1}}(\T^3; \R^3))\quad \text{as}\quad\kappa\to0$$
Taking the limit as $\kappa \to 0$ of the bound~\eqref{eq:rel-entropy-simp-J1} we see that the weak limits $\rho^{a,\nu}$ and $m^{a,\nu}$ satisfy
\begin{equation}
    \begin{aligned}
        \esssup_{t \in [0,T]} \int_{\T^3} \mathcal{G}(m^{a,\nu}(t,x)) + \mathcal{F}(\rho^{a,\nu}(t,x)) \,dx &\leq  C\esssup_{t \in [0,T]} \left|\int_0^t \int_{\T^3} \rho^{a,\nu} h'(\bar\theta_{\delta,\epsilon}^{a,\nu}) R^{a,\nu}_{\delta,\epsilon} \,dx\,d\tau\right| \\
        &\quad+C\esssup_{t \in [0,T]} \left|\int_0^t\int_{\T^3}(\rho s)^{a,\nu} R^{a,\nu}_{\delta,\epsilon} \,dx\,d\tau\right|\\
        &\quad+C\int F_\epsilon^a(0,x)\,dx + Ca.
    \end{aligned}
\end{equation}
Since the energy converges in $L^\infty(0,T; \mathcal{M}(\T^3))$ rather than in an $L^p$ space we handle it differently.
We note that $\mathcal{G}$ bounds the $L^1$ distance like so:
\begin{equation} \label{eq:L1-G-control}
    \int_{\T^3}|f|\,dx = \int_{|f| < 1} |f|\,dx + \int_{|f|\geq 1}|f|\,dx \leq \sqrt{\int \mathcal{G}(f)\,dx} + \int \mathcal{G}(f)\,dx.
\end{equation}
From this and~\eqref{eq:rel-entropy-simp-J1} we arrive at the following inequality, 
\begin{align} \label{eq:bound-after-kappa}
     \esssup_{t\in [0,T]} \int_{\T^3} \mathcal{G}(m^{a,\nu}) + \mathcal{F}(\rho^{a,\nu}) \,dx+\norm{\mathcal{E}^{a,\nu} - \bar {\mathcal{E}}}_{L^\infty(0,T; \mathcal{M}(\T^3))} 
        \leq& C\left(\sqrt{A^{a,\nu}_{\delta,\epsilon}} + A^{a,\nu}_{\delta,\epsilon} \right),
\end{align}
where 
\begin{align*} 
    A^{a,\nu}_{\delta,\epsilon} := \int_{\T^3} F_\epsilon^a(0,x)\,dx + a +&\esssup_{t\in[0,T]}\left|\int_0^t \int_{\T^3} \rho^{a,\nu} h'(\bar\theta_{\delta,\epsilon}^{a,\nu}) R^{a,\nu}_{\delta,\epsilon} \,dx\,d\tau\right|\\
    +&\esssup_{t\in[0,T]}\left|\int_0^t\int_{\T^3}(\rho s)^{a,\nu}  R^{a,\nu}_{\delta,\epsilon} \,dx\,d\tau\right|.
\end{align*}

Finally, we note that by taking the limit $\kappa \to 0$ of the continuity equation~\eqref{eq:nsf-cont} yields that the limits $\rho^{a,\nu}$ and $m^{a,\nu}$ satisfy the equation,  
\beq \label{eq:cont-post-k} 
    \partial_t \rho^{a,\nu} + \div(m^{a,\nu}) = 0 \quad \text{in} \quad \mathcal{D}'(\R_{\geq 0}\times \R^3). 
\eeq
Similarly, taking the limit of the entropy inequality~\eqref{eq:nsf-entropy-inequality} yields that $(\rho s)^{a,\nu}$ satisfies 
\beq \label{eq:ent-post-k} 
    \partial_t (\rho s)^{a,\nu} \geq \overline g_1 \quad \text{in} \quad \mathcal{D}'(\R_{\geq 0}\times \R^3), 
\eeq
where $\bar g_1$ is the weak limit $g_1 \wto \bar g_1$ as identified in the inequality~\eqref{eq:entropy-diff-ineq-AL}.
There is no contribution in the limit from the $g_2$ terms, as they converge to zero by~\eqref{eq:q-kappa-bound}.
In particular, since the $g_1$ are bounded uniformly in $a$, $\kappa$ in the space $ L^2(0,T; W^{-1,\,12/11}(\T^3))$
the weak limit $\bar g_1$ is bounded in the same space, uniformly in $a$.

\subsubsection{Radiative pressure limit {($a \to 0$)}} \label{sec:lim-in-a}
To take the radiative pressure term to zero we first note that $(\rho s)^{a,\nu}$ and $\rho^{a,\nu}$ both meet the same hypotheses as in the last section to apply Lemma \ref{lem:aubin-lions} (see equations~\eqref{eq:cont-post-k},~\eqref{eq:ent-post-k}, and the surrounding discussion).
This gives us the strong limits
\begin{align}
    \rho^{a,\nu}\to \rho^\nu,\quad (\rho s)^{a,\nu}\to (\rho s)^\nu \quad \text{in}\quad L^2(0,T; H^{-1}(\T^3)) \quad\text{as}\quad a \to 0.
\end{align}
Similarly, the bounds used to deduce the weak limit $ R^{\kappa, a,\nu}_\alpha \wto R^{a,\nu}_\alpha$ in $L^2(0,T; H^1(\T^3))$ in the previous section are uniform in $a$, so by the same argument as before we have
\begin{equation}
    R^{a,\nu}_{\delta,\epsilon}\wto R^{\nu}_{\delta,\epsilon} := u^\nu \cdot \nabla \bar\theta^\nu_{\delta,\epsilon} - \div([u_\delta^\nu \bar\theta^\nu_\delta]_{\epsilon}) + [\div(u^\nu_\delta)\bar\theta_\delta^\nu]_{\epsilon} \quad \text{weakly in}\quad L^2(0,T; H^{1}(\T^3)).
\end{equation}
and for the $J_2$ term we can still establish 
$$ h'(\bar\theta_{\delta,\epsilon}^{a,\nu}) R_{\delta,\epsilon}^{a,\nu} \wto h'(\bar\theta_{\delta,\epsilon}^{\nu})R_{\delta,\epsilon}^\nu \quad \text{weakly in} \quad L^2(0,T; H^1(\T^3)).$$

At this point, we again consider the structure of the strong limit $(\rho s)^\nu$. 
Recalling the bound~\eqref{eq:kappa-lim-rad-entropy-control},
$$\norm{ a(\rho s_R)^{a,\nu}}_{L^\infty(0,T; L^{4/3}(\T^3))} \leq \frac{4}{3}a^{1/4} \norm{\mathcal{E}_0}_{L^1(\T^3)}^{3/4},$$
by Sobolev embedding this bound controls the norm of $(\rho s_R)^{a,\nu}$ in the space in $L^2(0,T; H^{-1}(\T^3))$.
In particular, when $a \to 0$ we find the weak limit
$$ a(\rho s_R)^{a,\nu} \wto 0 \quad\text{in}\quad L^2(0,T; H^{-1}(\T^3))\quad\text{as}\quad a\to 0.$$
Moreover, since $\norm{(\rho s_M)^{a,\nu}}_{L^2( (0,T) \times \T^3)} $ is uniformly bounded independently of $a$, we obtain that our entropy has the form
$$ (\rho s)^\nu = (\rho s_M)^\nu.$$
In particular, thanks to the limits~\eqref{eq:G-wlims-kappa}, the conservation of energy~\eqref{eq:bounds-from-energy}, and Lemma \ref{lemma:rho-s-integrability} we have the $L^2$ bound
\beq \label{eq:Gnu-uni} \norm{(\rho s)^\nu }_{L^\infty( 0,T; L^2(\T^3))} < C, \eeq
for some $ C > 0$ depending on the initial data $(\rho_0, u_0, \theta_0)$ and $\gamma$.
This increase in integrability will be important later, as it will allow us to consider weak limits of $R_{\delta,\epsilon}^\nu$ in weaker topologies when we take our mollification parameters $\delta,\epsilon$ to zero.

This then gives that 
\begin{align*}
    J_2 &\to   \int_0^t \int_{\T^3} \rho^{\nu}h'(\bar\theta^{\nu}_{\delta,\epsilon}) \left[\partial_t \bar\theta_{\delta,\epsilon}^{\nu} + u^{\nu}\cdot \nabla  \bar\theta_{\delta,\epsilon}^{\nu}\right] \,dx\,d\tau\hphantom{(,)} \quad \text{as} \quad a\to 0,\\
    J_3 &\to  \int_0^t \int_{\T^3}(\rho s)^{\nu}   \left[\partial_t \bar\theta_{\delta,\epsilon}^{\nu} + u^{\nu}\cdot \nabla  \bar\theta_{\delta,\epsilon}^{\nu}\right] \,dx\,d\tau \quad \text{as} \quad a\to 0.
\end{align*}

To complete the limit of \eqref{eq:bound-after-kappa} we note that the term containing $F_\epsilon^a(0,\cdot )$ depends on $a$ due to~\eqref{eq:F-def} and \eqref{eq:entropy-def} containing a radiative component.
In this case, it is easy to see that $(-\rho s)^a(U_0) \to (-\rho s_M)(U_0), (-\rho s)^a(V^a_{\delta,\epsilon}) \to (-\rho s_M)(V_{\delta,\epsilon})$ in $L^1(\T^3)$. 
For the affine term, we use that $d(-\rho s)^a(V^a_{\delta,\epsilon}) = d(-\rho s_M)(V_{\delta,\epsilon})$ for all $a$; this follows from~\eqref{eq:gibbs-rho} and observing that the radiative components of $s, p, e$ all cancel for any state $V$. 
Finally, we note that in conservative coordinates (which we denote here as $U_0^a$; while $U_0$ is constant in $a$ when written in primitive variables here we emphasize the dependence of the energy on the parameter $a$) $$U_0^a - V^a = \left(\rho_0 - \bar \rho, \rho_0u_0, \rho_0 \frac{|u_0|^2}{2} +\rho_0 e_M(\rho_0,\theta_0) - \bar \rho e_M(\bar \rho, \bar \theta) \right) + a(0,0, \theta_0^4 - \bar \theta^4) \to U_0^0 - V$$
in $L^1(\T^3)$ as $a \to 0$, where we are setting $\bar \rho = \bar \rho_{\delta,\epsilon}^\nu(0,\cdot)$ and $\bar \theta = \bar \theta_{\delta,\epsilon}^\nu(0,\cdot)$. 
This establishes $F_\epsilon^a(0,\cdot ) \to F_\epsilon(0,\cdot )$ in $L^1(\T^3)$, giving convergence of these integrals. 

We also have weak limits of $\rho^{a,\nu}$, $m^{a,\nu}$, and $\mathcal{E}^{a,\nu}$ due to our a priori bounds identical to in the $\kappa \to 0$ limit, 
\begin{align*}
    \rho^{a,\nu} &\wto \rho^{\nu}\quad \text{weakly-$\ast$ in}\quad L^\infty(0,T; L^\gamma(\T^3))\quad \text{as}\quad a\to0, \\
    m^{a,\nu}&\wto m^{\nu} \quad\text{weakly-$\ast$ in} \quad L^\infty(0,T; L^{2\gamma/(\gamma + 1)}(\T^3;\R^3))\quad \text{as}\quad a\to0,\\
    \mathcal{E}^{a,\nu}&\wto \mathcal{E}^{\nu} \quad\text{weakly-$\ast$ in} \quad L^\infty(0,T; \mathcal{M}(\T^3))\quad \text{as}\quad a\to0.
\end{align*}
These limits enjoy the bound
\beq \label{eq:bound-after-a}\begin{aligned}
     \esssup_{t\in [0,T]} \int_{\T^3} G(m^\nu) + \mathcal{F}(\rho^{\nu}) \,dx +\norm{ \mathcal{E}^{\nu} - \bar{\mathcal{E}} }_{L^\infty(0,T; \mathcal{M}(\T^3))} 
        \leq& C \left(\sqrt{A^{\nu}_{\delta,\epsilon}} + A^{\nu}_{\delta,\epsilon}\right),
\end{aligned} \eeq
where
\begin{align*} 
    A^{\nu}_{\delta,\epsilon} := \int_{\T^3} F_\epsilon(0,x)\,dx &+  \esssup_{t\in[0,T]} \left|\int_0^t \int_{\T^3} \rho^{\nu} h'(\bar\theta_{\delta,\epsilon}^{\nu}) R_{\delta,\epsilon}^{\nu} \,dx\,d\tau \right| + \esssup_{t\in [0,T]} \left|\int_0^t \int_{\T^3} (-\rho s_M)^{\nu}  R_{\delta,\epsilon}^{\nu} \,dx\,d\tau \right|.
\end{align*}

\subsubsection{Mollification in $\delta$ limit ($\delta \to 0$)}
The only terms depending on $\delta$ within~\eqref{eq:bound-after-a} are $R^\nu_{\delta,\epsilon}$ and $h'(\bar \theta_{\delta,\epsilon}^\nu)$, so the next two sections will only concern these terms.
First, we recall
$$ R^\nu_{\delta,\epsilon} = u^\nu \cdot \nabla \bar \theta^\nu_{\delta,\epsilon} - \div([u_\delta^\nu \bar \theta^\nu_\delta]_{\epsilon}) + [\div(u^\nu_\delta)\bar \theta_\delta^\nu]_{\epsilon}.$$
Due to the uniform bound of $\bar \theta_{\delta}^\nu$ in $L^\infty ((0,T)\times \T^3)$ from equation~\eqref{eq:psi-bounded} there exists $\bar \theta^\nu\in L^\infty ((0,T)\times \T^3)$ such that 
\begin{equation*} \bar \theta_{\delta}^\nu \wto \bar \theta^\nu \quad  \text{weak-$\ast$ in}\quad  L^\infty((0,T)\times \T^3)\quad\text{as}\quad \delta \to 0.\end{equation*}
From this we also have the convergence of the mollified shifted temperature,
\begin{equation}\label{eq:delta-lim-psi-ws-Linf} \bar \theta_{\delta,\epsilon}^\nu \wto \bar \theta^\nu_{\epsilon} := \bar \theta^\nu \ast\varphi_\epsilon \quad  \text{weak$-\ast$ in}\quad  L^\infty((0,T)\times \T^3)\quad\text{as}\quad \delta \to 0.\end{equation}
Next, due to mollification in $\epsilon$, the $\bar \theta_{\delta,\epsilon}^\nu$ are uniformly bounded in $L^\infty(0,T; W^{1,\infty}(\T^3))$, which yields 
$$ u^\nu \nabla \bar \theta_{\delta,\epsilon}^\nu \wto u^\nu \nabla\bar \theta_\epsilon^\nu\quad \text{weakly in}\quad L^2(0,T; L^2(\T^3))\quad\text{as}\quad \delta\to0. $$
For the remaining two terms we note that since $u^\nu \in L^2(0,T; H^1(\T^3))$ by standard properties of mollifiers we have the strong convergence 
$$ u^\nu_\delta \to u^\nu \quad \text{in}\quad  L^2(0,T; H^1(\T^3))\quad \text{as} \quad \delta\to 0.$$
This combined with the weak convergence of $\bar \theta^\nu_{\delta,\epsilon}$~\eqref{eq:delta-lim-psi-ws-Linf} and \Cref{lem:kyv-R-cv} gives, up to a subsequence, the weak limits 
 \begin{align*}
    ( \div(u_\delta^\nu \bar \theta_\delta^\nu))_\epsilon &\wto (\div(u^\nu\bar \theta^\nu))_\epsilon \quad \text{in}\quad L^2(0,T;H^1(\T^3))\quad \text{as}\quad \delta\to 0, \\
    ( \div(u_\delta^\nu) \bar \theta_\delta^\nu)_\epsilon &\wto (\div(u^\nu)\bar \theta^\nu)_\epsilon \quad \text{in}\quad L^2(0,T;H^1(\T^3))\quad \text{as}\quad \delta\to 0.
\end{align*} 

We next note, via Aubin--Lions--Simon and the $\epsilon$ mollification, that we have the convergence 
$$\bar \theta_{\delta,\epsilon}^\nu \to \bar \theta_\epsilon^\nu \quad\text{in} \quad C([0,T]; W^{k,\infty}(\T^3)),$$
for any $k \in \N$. In particular, this gives the strong convergence
$$ h'(\bar\theta_{\delta,\epsilon}^\nu) \to h'(\bar\theta_{\epsilon}^\nu) \quad \text{in} \quad L^\infty([0,T]\times \T^3). $$

We have shown that 
$$ R^\nu_{\delta,\epsilon} \wto R_\epsilon^\nu\quad \text{in}\quad L^2(0,T;H^1(\T^3))\quad \text{as}\quad \delta\to 0, $$
where 
$$ R^\nu_{\epsilon} := u^\nu \cdot \nabla \bar \theta^\nu_{\epsilon} - \div([u^\nu \bar \theta^\nu]_{\epsilon}) + [\div(u^\nu)\bar \theta^\nu]_{\epsilon}.$$
By~\eqref{eq:rho-gamma} and~\eqref{eq:Gnu-uni} we have $\rho^\nu, (\rho s)^\nu \in L^2(0,T; L^2(\T^3))$ which gives us that 
\begin{align*}
    A^\nu_{\delta,\epsilon} \to  A^{\nu}_{\epsilon} :=& \int_{\T^3} F_\epsilon(0,x)\,dx + \esssup_{t\in [0,T]} \left|\int_0^t \int_{\T^3}  \rho^{\nu} h'(\bar \theta_{\epsilon}^{\nu}) R^{\nu}_{\epsilon} \,dx\,d\tau \right|\\&\qquad+\esssup_{t\in [0,T]}\left|\int_0^t \int_{\T^3}  (\rho s_M)^{\nu} R^{\nu}_{\epsilon} \,dx\,d\tau \right|\quad \text{as} \quad\delta\to 0.
\end{align*}
Therefore, passing to the limit $\delta \to 0$ in the bound~\eqref{eq:bound-after-a} we have 
\beq \label{eq:bound-after-delta}\begin{aligned}
     \esssup_{t \in [0,T]}\int_{\T^3} G(m^\nu) + \mathcal{F}(\rho^{\nu}) \,dx +\norm{\mathcal{E}^{\nu} -  \bar{\mathcal{E}} }_{L^\infty(0,T; \mathcal{M}(\T^3))} 
        \leq&\,C \left(\sqrt{A^{\nu}_{\epsilon}} + A^{\nu}_{\epsilon} \right).
\end{aligned} \eeq

\subsubsection{Mollification in $\epsilon$ limit}
To begin, we note that we can write 
$$ R^\nu_{\epsilon} = u^\nu \cdot \nabla \bar \theta_\epsilon^\nu - [u^\nu \cdot \nabla \bar \theta^\nu]_\epsilon.$$
We now recall a commutator lemma due to DiPerna and Lions. 
\begin{lemma}[Lemma II.1 of \cite{diperna1989ordinary}] \label{lemma:DLcommutator}
    Let $1\leq p\leq \infty$ and $1\leq \gamma< \infty$. If $u \in L^\gamma(0,T; W^{1,\alpha}(\T^3))$, for an $\alpha > p'$, and $ \psi \in L^\infty([0,T]; L^p(\T^3))$
    then 
    $$ u \cdot \nabla \psi_\epsilon- [u\cdot \nabla \psi]_\epsilon \to 0\quad \text{in }L^\gamma([0,T]; L^\beta(\T^3)) \quad \text{as }\epsilon \to 0, $$
    where $\frac{1}{\beta} = \frac{1}{\alpha} + \frac{1}{p}$ when either $\alpha < \infty$ or $p < \infty$.
\end{lemma}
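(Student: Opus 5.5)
The plan is the classical DiPerna--Lions argument: first prove the commutator estimate for smooth $\psi$, then extend to general $\psi\in L^\infty(0,T;L^p(\T^3))$ by density, using a bound on the commutator that is uniform in $\epsilon$. Write the commutator explicitly. Using $\nabla\psi_\epsilon=\psi\ast\nabla\varphi_\epsilon$ and integrating by parts in the convolution (i.e. $[u\cdot\nabla\psi]_\epsilon=[\div(u\psi)]_\epsilon-[\psi\div u]_\epsilon$), we obtain, for a.e. $t$,
$$ C_\epsilon(u,\psi)(x):=u\cdot\nabla\psi_\epsilon-[u\cdot\nabla\psi]_\epsilon=\int_{\T^3}\psi(x-y)\,\big(u(x-y)-u(x)\big)\cdot\nabla\varphi_\epsilon(y)\,dy+[\psi\,\div u]_\epsilon(x). $$
Both terms make sense for $\psi\in L^p$, $u\in W^{1,\alpha}$.

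\textbf{Step 1: uniform bound.} Changing variables $y=\epsilon z$ turns the first term into $\int\psi(x-\epsilon z)\,\frac{u(x-\epsilon z)-u(x)}{\epsilon}\cdot\nabla\varphi(z)\,dz$, with $|z|\le1$. Since $\norm{(u(\cdot-\epsilon z)-u)/\epsilon}_{L^\alpha}\le|z|\norm{\nabla u}_{L^\alpha}$, Minkowski's and H\"older's inequalities (with $1/\beta=1/\alpha+1/p$) give
$$ \norm{C_\epsilon(u,\psi)}_{L^\beta(\T^3)}\le C\norm{\nabla u}_{L^\alpha(\T^3)}\norm{\psi}_{L^p(\T^3)}, $$
where $C$ depends only on $\varphi$. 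The second term obeys the same bound by Young's inequality. Raising to the power $\gamma$ and integrating in time yields $\norm{C_\epsilon}_{L^\gamma(0,T;L^\beta)}\le C\norm{u}_{L^\gamma(0,T;W^{1,\alpha})}\norm{\psi}_{L^\infty(0,T;L^p)}$, uniformly in $\epsilon$. The hypothesis $\alpha>p'$ ensures $\beta>1$, but the bound itself only needs $\beta\ge1$.

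\textbf{Step 2: smooth case and density.} If $\psi$ is smooth in $x$ (uniformly in $t$), then $\nabla\psi_\epsilon\to\nabla\psi$ and $[u\cdot\nabla\psi]_\epsilon\to u\cdot\nabla\psi$ in $L^\gamma(0,T;L^\beta)$, so $C_\epsilon\to0$. For general $\psi$ we would like to approximate it by smooth $\psi^k$ in $L^\infty(0,T;L^p)$, and then bilinearity plus Step 1 closes the argument by a $3\varepsilon$ argument. This is the main obstacle. When $p=\infty$, or when $\psi$ is not strongly measurable in time with values in $L^p$, such an approximation is not available in $L^\infty(0,T;L^p)$.

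I would handle this as follows. For $p<\infty$, mollify in space only: $\psi^k=\psi\ast\varphi_{1/k}$. Then for each fixed $t$, $\psi^k(t)\to\psi(t)$ in $L^p$, with $\norm{\psi^k(t)}_{L^p}\le\norm{\psi}_{L^\infty L^p}$. I then apply Step 1 pointwise in time to $C_\epsilon(u,\psi-\psi^k)$:
$$ \norm{C_\epsilon(u,\psi-\psi^k)(t)}_{L^\beta}\le C\norm{\nabla u(t)}_{L^\alpha}\norm{(\psi-\psi^k)(t)}_{L^p}. $$
By dominated convergence in $t$ (the dominating function is $2C\norm{\nabla u(t)}_{L^\alpha}\norm{\psi}_{L^\infty L^p}\in L^\gamma$), this tends to $0$ in $L^\gamma(0,T)$ as $k\to\infty$, uniformly in $\epsilon$. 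Combined with $C_\epsilon(u,\psi^k)\to0$ for fixed $k$, which follows from the smooth case and dominated convergence again, this gives the claim.

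When $p=\infty$ we have $\alpha<\infty$, so we swap roles: approximate $u$ by smooth $u^k$ in $L^\gamma(0,T;W^{1,\alpha})$, which is possible since $\gamma,\alpha<\infty$. For smooth $u^k$, the first term of $C_\epsilon$ tends to $\psi\,\div u^k$ times $-\int z\cdot\nabla\varphi\,dz=1$ up to sign, and this cancels the limit of $[\psi\div u^k]_\epsilon$, i.e. the standard Taylor expansion of the difference quotient. Here the convergence of the first term requires a little care, since $\psi$ is only $L^\infty$. I would handle it by noting that $\frac{u^k(x-\epsilon z)-u^k(x)}{\epsilon}\to -z\cdot\nabla u^k(x)$ uniformly, and that $\psi(x-\epsilon z)$ paired against a fixed continuous function converges by continuity of translations in $L^\beta$ after approximating $\psi$ in $L^\beta$ (valid since $\beta<\infty$ and $\T^3$ has finite measure).
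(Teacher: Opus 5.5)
Your argument is the standard DiPerna--Lions proof: an explicit commutator formula, an $\epsilon$-uniform bilinear bound, and density in $\psi$ when $p<\infty$ and in $u$ when $p=\infty$, with dominated convergence in time. The paper gives no proof of its own and simply cites that result, so your plan is correct and matches it. One slip needs fixing: the first term of the commutator is $\int\psi(x-y)\big(u(x)-u(x-y)\big)\cdot\nabla\varphi_\epsilon(y)\,dy$, not $u(x-y)-u(x)$ (for $\psi\equiv 1$ your formula gives $2(\div u)_\epsilon$ instead of $0$), and the Taylor step uses $\int z_j\partial_i\varphi(z)\,dz=-\delta_{ij}$ (note $-\int z\cdot\nabla\varphi\,dz=3$, not $1$), so the first term tends to $-\psi\,\div u^k$ and cancels $[\psi\,\div u^k]_\epsilon$ exactly rather than ``up to sign''.
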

Applying \Cref{lemma:DLcommutator} with $\gamma =\alpha = \beta= 2$ and $p = \infty$ we can immediately deduce that 
$$ R^\nu_{\epsilon} \to 0  \quad \text{in }L^2([0,T]; L^2(\T^3)) \quad \text{as }\epsilon \to 0. $$
Since $\rho^\nu$ and  $(-\rho s_M)^\nu$ are uniformly bounded in $L^2(0,T; L^2(\T^3))$ we can conclude that 
$$ J_2,J_3 \to 0.$$

To take the limit of~\eqref{eq:bound-after-delta}, it remains to show that the initial value term $F_\epsilon(0,\cdot)$ converges. 
This follows from the fact that $f\ast \phi_\epsilon \to f$ a.e. for $f \in L^\infty(\T^3)$ and by using the definition~\eqref{eq:def_relative_entropy} to bound the integrand. 
We then conclude by the dominated convergence theorem. 

Therefore, taking the limit as $\epsilon \to 0$ of~\eqref{eq:bound-after-a} we find
\beq \label{eq:bound-after-eps}\begin{aligned}
     \esssup_{t \in [0,T]}\int_{\T^3} \mathcal{G}(m^{\nu}) + \mathcal{F}(\rho^{\nu}) \,dx &+ \norm{\mathcal{E}^{\nu} -  \bar{\mathcal{E}} }_{L^\infty(0,T; \mathcal{M}(\T^3))} \\
        \leq&\,C \left(\sqrt{\int_{\T^3} (-\rho s_M)(U_0| V) \,dx} + \int_{\T^3} (-\rho s_M)(U_0| V) \,dx \right).
\end{aligned} \eeq

\subsubsection{Viscosity limit {($\nu \to 0)$}}
We begin by recalling that $\norm{\rho^\nu}_{L^\infty(0,T; L^\gamma(\T^3))} < C$ by~\eqref{eq:bounds-from-energy} where $C$ is independent of $\nu$. 
As a result, there exists a function $\rho$ such that 
\beq \label{eq:nu-rho-lim} \rho^\nu \wto \rho \quad\text{weakly-$\ast$ in}\quad L^\infty(0,T; L^\gamma(\T^3))\quad\text{as}\quad \nu \to 0. \eeq
Next, we recall that by~\eqref{eq:m-uniform-bound} we have a uniform bound of $m^\nu$ in $L^\infty(0,T; L^{2\gamma/(\gamma + 1)})$.
As a result, there exists $m$ such that 
$$ m^\nu \wto m \quad\text{weakly-$\ast$ in}\quad L^\infty(0,T; L^{2\gamma/(\gamma + 1)}(\T^3))\quad\text{as}\quad \nu\to 0. $$
Finally, we note~\eqref{eq:bound-after-eps} gives a uniform bound $ \norm{\mathcal{E}^\nu - \bar{\mathcal{E}}}_{L^\infty(0,T; \mathcal{M}(\T^3))},$ hence we have a $\mathcal{E}$ such that 
$$ \mathcal{E}^\nu \wto \mathcal{E} \quad\text{weakly-$\ast$ in}\quad L^\infty(0,T; \mathcal{M}(\T^3)) \quad \text{as}\quad \nu\to 0.$$
Therefore, taking the limit of~\eqref{eq:bound-after-eps} as $\nu\to 0$ we find
$$\begin{aligned}
     \esssup_{t \in [0,T]}\int_{\T^3} \mathcal{G}(m) + \mathcal{F}(\rho) \,dx &+\norm{\mathcal{E}- \bar{\mathcal{E}}}_{L^\infty(0,T; \mathcal{M}(\T^3))} \\
        \leq&\,C \left(\sqrt{\int_{\T^3} (-\rho s_M)(U_0| V) \,dx} + \int_{\T^3} (-\rho s_M)(U_0| V) \,dx \right),
\end{aligned} $$
for some $C > 0$, which is exactly the expression~\eqref{eq:main-est}.

\subsection{Uniqueness of vanishing dissipation limits} Consider a sequence of initial data $\{U^n_0\}_{n=1}^\infty$ such that 
\begin{equation} \label{eq:rel-ent-cv}
    \lim_{n\to \infty}\int_{\T^3} (-\rho s_M)(U^n_0| V_0)\,dx = 0,
\end{equation}
where $V$ is the entropy wave~\eqref{eq:V-form}. 
We note that this convergence is implied by~\eqref{eq:uniform-bound-assumption-thm}.
Then by the estimate~\eqref{eq:main-est} the vanishing dissipation limits $(\rho^n, m^n, \mathcal{E}^n)$ with initial data $U^n_0$ satisfy 
\begin{equation}\label{eq:lim-of-co-quants} 
    \lim_{n\to \infty} \left[ \esssup_{t \in [0,T]} \int_{\T^3} \mathcal{G}(m^n) + \mathcal{F}(\rho^n) \,dx  +\norm{\mathcal{E}^{n} -  \bar{\mathcal{E}} }_{L^\infty(0,T; \mathcal{M}(\T^3))} \right]  = 0. 
\end{equation}
We note that by~\eqref{eq:lim-of-co-quants} we have convergence of total energy,  
$$ \mathcal{E}^n \to \bar {\mathcal{E}}\quad \text{in}\quad L^\infty(0,T;\mathcal{M}(\T^3)) \quad \text{as}\quad n \to \infty.$$
Furthermore, from equation~\eqref{eq:rel-ent-cv} and \Cref{lemma:coercive} we have the uniform bound 
$$ \norm{\mathcal{E}^n_0}_{L^1(\T^3)} < C.$$
Since $\rho^n$ and $m^n$ are obtained as weak limits from bounds depending on the $\norm{\mathcal{E}_0^n}_{L^1(\T^3)}$, we similarly have
\begin{equation} \label{eq:limits-uni-bounds-energy}
    \norm{\rho^n}_{L^\infty(0,T; L^\gamma(\T^3))} + \norm{m^n}_{L^\infty(0,T; L^{2\gamma/(\gamma + 1)}(\T^3))} < C.
\end{equation}
In particular, up to a subsequence we have a $\rho$ such that 
$$\rho^n \wto \rho \quad \text{weakly$-\ast$ in} \quad L^\infty(0,T; L^\gamma(\T^3))\quad \text{as}\quad n\to \infty.$$ 
For $m$, we note that from the bound~\eqref{eq:L1-G-control} we have
$$ \norm{m^n}_{L^\infty(0,T;L^1(\T^3))} \leq C \esssup_{t\in [0,T]} \left(\int_{\T^3} \mathcal{G}(m)\,dx + \sqrt{\int_{\T^3} \mathcal{G}(m)\,dx} \right) \to 0\quad\text{as}\quad n\to\infty.$$
Interpolating this limit with the momentum bound~\eqref{eq:limits-uni-bounds-energy} we see 
$$m \to 0 \quad \text{in} \quad L^\infty(0,T; L^q(\T^3)) \quad \text{for any} \quad q < \frac{2\gamma}{\gamma + 1}.$$


To deduce that the weak limit $\rho$ is in fact $\bar \rho$ we first note that since $\rho^n$ and $m^n$ are weak limits from the Navier--Stokes--Fourier system they satisfy the continuity equation,
\beq \label{eq:limit-cont}  
    \partial_t \rho^n + \div(m^n) = 0 \quad \text{in}\quad \mathcal{D}'([0,T]\times \T^3).
\eeq
To begin, we test the continuity equation~\eqref{eq:limit-cont} against a test function of the form $\Phi(x)\chi_\beta(s)$ for any $\Phi \in C^\infty(\T^3)$ and 
\begin{equation}\label{eq:chi-test-fnc}\chi_\beta(t) = 1 - \int_0^t \varphi_\beta(T - s)\,ds,\end{equation}
where $\varphi_\beta$ is a suitable 1-d mollifier, similar to what was defined in \Cref{sec:shift}.
This yields 
$$ \int_0^\infty \int_{\T^3} \Phi(x) \chi_\beta'(s) \rho^n(s,x)\,dx\,ds + \int_0^\infty \int_{\T^3}\chi_\beta(s) \nabla \Phi(x)\cdot m^n(s,x)\,dx\,ds =-\int_{T^3} \Phi(x) \rho^n(0,x)\,dx.  $$
We next note that since $\rho^n \in L^\infty(0,T; L^\gamma(\T^3))$, $\partial_t \rho^n \in L^\infty(0,T; W^{-1, 2\gamma/(\gamma + 1)}(\T^3))$ for a $\gamma > 2$ we have the sequence of embeddings $L^\gamma(\T^3) \hookrightarrow\hookrightarrow H^{-1}(\T^3) \hookrightarrow W^{-1, 2\gamma/ (\gamma + 1)}$.
By Aubin-Lions we can then conclude $\rho^n \in C([0,T]; H^{-1}(\T^3) )$ and there is a subsequence that converges strongly to some $\rho$ in this space. 
First, passing to the limit $n\to \infty$ we find 
$$ \int_0^\infty\int_{\T^3} \Phi(x) \chi_\beta'(s) \rho(t,x)\,dx\,ds = -\int_{T^3} \Phi(x) \rho(0,x)\,dx $$
due to $m^n \to 0$ in $L^\infty(0,T;L^{q}(\T^3))$ for some $q > 1$. 
We now remark that due to~\eqref{eq:rel-ent-cv} we have $\rho_0^n \to \bar \rho$ distributionally, because
\begin{align*} 
    \int_{\T^3} \Phi(x) (\rho_n^0 - \bar \rho)\,dx  &=\int_{\T^3} \Phi(x) (\rho_n^0 - \bar \rho)(\mathbbm{1}_{ \{ |\rho_n^0 - \bar \rho| < 1 \}} + \mathbbm{1}_{ \{ |\rho_n^0 - \bar \rho| \geq 1 \}})\,dx \\
    &\leq \norm{\Phi}_{L^\infty} \int_{\{ |\rho_n^0 - \bar \rho| \geq 1 \}} |\rho_n^0 - \bar \rho|\,dx + \norm{\Phi}_{L^2}\left(\int_{\{ |\rho_n^0 - \bar \rho| < 1 \}} |\rho_n^0 - \bar \rho|^2\,dx\right)^{1/2} \\
    &\lesssim_{\Phi} \int_{\T^3} \mathcal{G}(\rho_0^n - \bar \rho) \,dx + \sqrt{\int_{\T^3} \mathcal{G}(\rho_0^n - \bar \rho) \,dx} \\
    &\leq C(-\rho s)(U^n_0| \bar U) + C\sqrt{(-\rho s)(U^n_0| \bar U)}  \to 0.
\end{align*}
Now using $\rho^n_0 \to \bar \rho$ and the time continuity, we pass to the limit in $n$ and $\beta$ to reveal that
$$\int_{\T^3} \Phi(x) \rho(t,x)\,dx = \int_{\T^3} \Phi(x) \bar \rho(x)\,dx. $$
We conclude $ \rho(t,\cdot) = \bar \rho$ a.e. in $\T^3$ for all $t \in [0,T]$. 

\begin{remark} \label{rmk:strong-cv-density}
    Suppose that our entropy wave takes just two states,
    $$ V(t,x) = (\rho^*,0,\theta^*)\mathbbm{1}_{\Omega} + (\rho_*,0,\theta_*)\mathbbm{1}_{\T^3\setminus\Omega}$$
    on some measurable set $\Omega$. 
    In this case, we can establish $\rho^n\to \rho$ in $L^\infty(0,T;L^1( \T^3))$. 
    In light of our uniform bounds on $\rho^n$, this allows us to conclude $\rho^n \to \rho$ in $L^\infty(0,T; L^p(\T^3))$ for all $1\leq p < \gamma$ by interpolation.

Indeed, restricting to $\Omega$ where $\bar \rho = \rho^\ast$ and using that $\rho^n,\rho \in C(0,T; H^{-1}(\T^3))$ we find 
\begin{align*} 
    \int_\Omega |\rho^n(t,x) - \rho^\ast| \,dx &= \int_\Omega (\rho^n(t,x) - \rho^\ast)_+ + (\rho^n(t,x) - \rho^\ast)_- \,dx \\
    &= \int_\Omega  ((\rho^n(t,x) - \rho^\ast)_- - (\rho^n(t,x) - \rho^\ast)_+) + 2(\rho^n(t,x) - \rho^\ast)_+ \,dx \\
    &= -\int_\Omega(\rho^n(t,x) - \rho^\ast) \,dx + \int_\Omega 2(\rho^n(t,x) - \rho^\ast)_+\,dx  \to 0
\end{align*}
where the first integral converges to zero via the weak convergence $\rho^n \wto \rho$ weak-$\ast$ in $L^\infty(0,T; L^\gamma(\T^3))$ while the second converges to zero due to the control
\begin{align*}
     \int_\Omega (\rho^n (t,x)- \rho^\ast)_+\,dx &= \int_\Omega (\rho^n(t,x) - \rho^\ast)_+ (\mathbbm{1}_{\{|\rho^n(t,x) -\rho^\ast| > 1\}}(x) + \mathbbm{1}_{\{|\rho^n(t,x) -\rho^\ast| \leq 1\}}(x))\,dx \\
     &\leq \int_\Omega (\rho^n(t,x) - \rho^\ast)_+ \mathbbm{1}_{|\rho^n(t,x) -\rho^\ast| > 1}(x)\,dx\\
     &\qquad +  \left(\int_\Omega (\rho^n(t,x) - \rho^\ast)_+^2\mathbbm{1}_{|\rho^n(t,x) -\rho^\ast| \leq 1}\,dx \right)^{1/2} \\
     &\leq \int_{\T^3} \mathcal{F}(\rho^n) \,dx + \left(\int_{\T^3} \mathcal{F}(\rho^n) \,dx\right)^{1/2} \to 0
\end{align*}
where the last limit is due to equation~\eqref{eq:lim-of-co-quants}.
We can do the same within $\Omega^c$ showing the desired convergence $\rho^n \to \bar\rho$ in $L^\infty(0,T; L^1(\T^3))$.
\end{remark}

\section{AI Use Disclosure} Generative AI was used to assist with the final editing and proofreading of this manuscript. All results, arguments, and proofs are made by the authors.

\printbibliography

@article{kvy21,
  title={Uniqueness of a planar contact discontinuity for 3D compressible Euler system in a class of zero dissipation limits from Navier--Stokes--Fourier system},
  author={Kang, Moon-Jin and Vasseur, Alexis F and Wang, Yi},
  journal={Communications in Mathematical Physics},
  volume={384},
  number={3},
  pages={1751--1782},
  year={2021},
  publisher={Springer}
}

@article{novotnyexist13,
  title={Navier--Stokes--Fourier system on unbounded domains: Weak solutions, relative entropies, weak-strong uniqueness},
  author={Jessl{\'e}, Didier and Jin, Bum Ja and Novotny, Antonin},
  journal={SIAM Journal on Mathematical Analysis},
  volume={45},
  number={3},
  pages={1907--1951},
  year={2013},
  publisher={SIAM}
}

@book{feireisl04,
  title={Dynamics of viscous compressible fluids},
  author={Feireisl, Eduard},
  volume={26},
  year={2004},
  publisher={Oxford University Press}
}

@book{feireislnovotny17,
  title={Singular Limits in Thermodynamics of Viscous Fluids},
  author={Feireisl, Eduard and Novotny, Antonin}, 
  year={2017},
  publisher={Birkh{\"a}user}
}

@misc{maria-convex,
      title={A note on relations between convexity and concavity of thermodynamic functions}, 
      author={Mária Lukáčová-Medvid'ová and Ferdinand Thein and Gerald Warnecke and Yuhuan Yuan},
      year={2025},
      eprint={2510.24440},
      archivePrefix={arXiv},
      primaryClass={math.AP},
      url={https://arxiv.org/abs/2510.24440}, 
}

@article {feireisl-zero-diss-limit,
    AUTHOR = {Feireisl, Eduard},
     TITLE = {Vanishing dissipation limit for the
              {N}avier-{S}tokes-{F}ourier system},
   JOURNAL = {Commun. Math. Sci.},
  FJOURNAL = {Communications in Mathematical Sciences},
    VOLUME = {14},
      YEAR = {2016},
    NUMBER = {6},
     PAGES = {1535--1551},
      ISSN = {1539-6746,1945-0796},
   MRCLASS = {35Q35 (35D30 35Q31)},
  MRNUMBER = {3538361},
       DOI = {10.4310/CMS.2016.v14.n6.a4},
       URL = {https://doi.org/10.4310/CMS.2016.v14.n6.a4},
}

@article{wang2024nonlinear,
  title={Nonlinear stability of entropy waves for the Euler equations},
  author={Wang, Wei and Zhang, Zhifei and Zhao, Wenbin},
  journal={Mathematische Annalen},
  volume={390},
  number={4},
  pages={4861--4888},
  year={2024},
  publisher={Springer}
}

@article{krupa2025contact,
  title={Contact discontinuities for 2-D isentropic Euler are unique in 1-D but wildly non-unique otherwise},
  author={Krupa, Sam G and Sz{\'e}kelyhidi Jr, L{\'a}szl{\'o}},
  journal={Communications in Mathematical Physics},
  volume={406},
  number={5},
  pages={109},
  year={2025},
  publisher={Springer}
}

@article{de2010admissibility,
  title={On admissibility criteria for weak solutions of the Euler equations},
  author={De Lellis, Camillo and Sz{\'e}kelyhidi Jr, L{\'a}szl{\'o}},
  journal={Archive for rational mechanics and analysis},
  volume={195},
  number={1},
  pages={225--260},
  year={2010},
  publisher={Springer}
}

@article{chiodaroli2015global,
  title={Global ill-posedness of the isentropic system of gas dynamics},
  author={Chiodaroli, Elisabetta and De Lellis, Camillo and Kreml, Ond{\v{r}}ej},
  journal={Communications on Pure and Applied Mathematics},
  volume={68},
  number={7},
  pages={1157--1190},
  year={2015},
  publisher={Wiley Online Library}
}

@article{mengual2023dissipative,
  title={Dissipative Euler flows for vortex sheet initial data without distinguished sign},
  author={Mengual, Francisco and Sz{\'e}kelyhidi Jr, L{\'a}szl{\'o}},
  journal={Communications on Pure and Applied Mathematics},
  volume={76},
  number={1},
  pages={163--221},
  year={2023},
  publisher={Wiley Online Library}
}

@article{vy-1d,
  title={The inviscid limit to a contact discontinuity for the compressible Navier--Stokes--Fourier system using the relative entropy method},
  author={Vasseur, Alexis and Wang, Yi},
  journal={SIAM Journal on Mathematical Analysis},
  volume={47},
  number={6},
  pages={4350--4359},
  year={2015},
  publisher={SIAM}
}

@article {MR546634,
    AUTHOR = {Dafermos, Constantine M.},
     TITLE = {The second law of thermodynamics and stability},
   JOURNAL = {Arch. Rational Mech. Anal.},
  FJOURNAL = {Archive for Rational Mechanics and Analysis},
    VOLUME = {70},
      YEAR = {1979},
    NUMBER = {2},
     PAGES = {167--179},
      ISSN = {0003-9527},
   MRCLASS = {73B30 (35L65)},
  MRNUMBER = {546634},
MRREVIEWER = {C.-C.\ Wang},
       DOI = {10.1007/BF00250353},
       URL = {https://doi.org/10.1007/BF00250353},
}

@article {MR523630,
    AUTHOR = {DiPerna, Ronald J.},
     TITLE = {Uniqueness of solutions to hyperbolic conservation laws},
   JOURNAL = {Indiana Univ. Math. J.},
  FJOURNAL = {Indiana University Mathematics Journal},
    VOLUME = {28},
      YEAR = {1979},
    NUMBER = {1},
     PAGES = {137--188},
      ISSN = {0022-2518,1943-5258},
   MRCLASS = {35L65},
  MRNUMBER = {523630},
MRREVIEWER = {V.\ \v Durikovi\v c},
       DOI = {10.1512/iumj.1979.28.28011},
       URL = {https://doi.org/10.1512/iumj.1979.28.28011},
}

@article{serre2014relative,
  title={About the relative entropy method for hyperbolic systems of conservation laws},
  author={Serre, Denis and Vasseur, Alexis F},
  booktitle={Contemp. Math. AMS},
  volume={658},
  pages={237--248},
  year={2016},
  organization={American Mathematical Society}
}

@article{serre20142,
  title={$ {L}^2$-type contraction for systems of conservation laws},
  author={Serre, Denis and Vasseur, Alexis F},
  journal={Journal de l’{\'E}cole polytechnique—Math{\'e}matiques},
  volume={1},
  pages={1--28},
  year={2014}
}

@article {MR2807139,
    AUTHOR = {Leger, Nicholas and Vasseur, Alexis},
     TITLE = {Relative entropy and the stability of shocks and contact
              discontinuities for systems of conservation laws with non-{BV}
              perturbations},
   JOURNAL = {Arch. Ration. Mech. Anal.},
  FJOURNAL = {Archive for Rational Mechanics and Analysis},
    VOLUME = {201},
      YEAR = {2011},
    NUMBER = {1},
     PAGES = {271--302},
      ISSN = {0003-9527,1432-0673},
   MRCLASS = {35L65 (35A02 35B35 35D30 35L67 35Q31 76L05)},
  MRNUMBER = {2807139},
MRREVIEWER = {Ramon\ G.\ Plaza},
       DOI = {10.1007/s00205-011-0431-1},
       URL = {https://doi.org/10.1007/s00205-011-0431-1},
}

@article{kang2016criteria,
  title={Criteria on contractions for entropic discontinuities of systems of conservation laws},
  author={Kang, Moon-Jin and Vasseur, Alexis F},
  journal={Archive for Rational Mechanics and Analysis},
  volume={222},
  number={1},
  pages={343--391},
  year={2016},
  publisher={Springer}
}

@article{NSVS,
    DOI={10.48550/arXiv.2401.09305},
    url={https://doi.org/10.48550/arXiv.2401.09305},
    year=2024,
    month={1},
    AUTHOR={Chen, Geng and Kang, Moon-Jin and Vasseur, Alexis F.},
    title={From {N}avier-{S}tokes to {BV} solutions of the barotropic Euler equations},
    journal = {Preprint on Arxiv}
}

@article{kang2020contraction,
  title={Contraction property for large perturbations of shocks of the barotropic Navier--Stokes system},
  author={Kang, Moon-Jin and Vasseur, Alexis F},
  journal={Journal of the European Mathematical Society},
  volume={23},
  number={2},
  pages={585--638},
  year={2021}
}

@article{kang2025time,
  title={Time-asymptotic stability of generic Riemann solutions for compressible Navier--Stokes--Fourier equations},
  author={Kang, Moon-Jin and Vasseur, Alexis F and Wang, Yi},
  journal={Archive for Rational Mechanics and Analysis},
  volume={249},
  number={4},
  pages={42},
  year={2025},
  publisher={Springer}
}

@article{giri2022non,
  title={On Non-uniqueness of Continuous Entropy Solutions to the Isentropic Compressible Euler Equations},
  author={Giri, Vikram and Kwon, Hyunju},
  journal={Archive for Rational Mechanics and Analysis},
  volume={245},
  number={2},
  pages={1213--1283},
  year={2022},
  publisher={Springer}
}

@article{chiodaroli2025long,
  title={On the long-time behavior of solutions to the Navier--Stokes--Fourier system on unbounded domains},
  author={Chiodaroli, Elisabetta and Feireisl, Eduard},
  journal={Journal of the London Mathematical Society},
  volume={111},
  number={1},
  pages={e70067},
  year={2025},
  publisher={Wiley Online Library}
}

@article{kang2021uniqueness,
  title={Uniqueness and stability of entropy shocks to the isentropic Euler system in a class of inviscid limits from a large family of Navier--Stokes systems},
  author={Kang, Moon-Jin and Vasseur, Alexis F},
  journal={Inventiones mathematicae},
  volume={224},
  number={1},
  pages={55--146},
  year={2021},
  publisher={Springer}
}

@book{markfelder2021convex,
  title={Convex integration applied to the multi-dimensional compressible Euler equations},
  author={Markfelder, Simon},
  year={2021},
  publisher={Springer}
}

@article{diperna1989ordinary,
  title={Ordinary differential equations, transport theory and Sobolev spaces},
  author={DiPerna, Ronald J and Lions, Pierre-Louis},
  journal={Inventiones mathematicae},
  volume={98},
  number={3},
  pages={511--547},
  year={1989},
  publisher={Springer}
}

@article{VASSEUR2008323,
title = "Recent Results on Hydrodynamic Limits",
journal = "Handbook of Differential Equations: Evolutionary Equations",
volume = "4",
number = "",
pages = "323 - 376",
year = "2008",
note = "",
issn = "1874-5717",
doi = "http://dx.doi.org/10.1016/S1874-5717(08)00007-8",
url = "http://www.sciencedirect.com/science/article/pii/S1874571708000078",
author = "Vasseur, Alexis F."
}
\end{document}